  \gdef\sformat{"Date: 
\endgroup
\directlua{
 local cmd="git show -s --format='"..\sformat.."'"
 local r=io.popen(cmd):read("*a")
 if (r) then
      tex.print("\string\\def\string\\COMMIT{"..r.."}")
 end
 }
\or
\relax\fi
\makeatother
\ifdefined\COMMIT
        \usepackage{background}
        \backgroundsetup{%
         pages=all, placement=bottom,angle=0,scale=2,%
         vshift=20pt,%
         contents={\COMMIT}}
\fi
\else
\usepackage[british]{babel}
\fi

\newcommand{\KK}{\mathbb{K}}
\newcommand{\RR}{\mathbb{R}}
\newcommand{\SSs}{\mathbb{S}}
\newcommand{\CC}{\mathbb{C}}
\newcommand{\cS}{\mathcal{S}}
\newcommand{\cQ}{\mathcal{Q}}
\newcommand{\cB}{\mathcal B}
\newcommand{\cA}{\mathcal A}
\newcommand{\cU}{\mathcal U}

\newcommand{\wt}{\mathrm{wt}}
\newcommand{\cL}{\mathcal{L}}
\newcommand{\cH}{\mathcal{H}}
\newcommand{\GG}{\mathbb{G}}
\newcommand{\HH}{\mathbb{H}}
\newcommand{\PP}{\mathbb{P}}
\newcommand{\XX}{\mathbb{X}}
\newcommand{\fB}{\mathfrak{B}}
\newcommand{\fX}{\mathfrak{X}}
\newcommand{\fP}{\mathfrak{P}}
\newcommand{\fL}{\mathfrak{L}}
\newcommand{\fA}{\mathfrak{A}}
\newcommand{\fC}{\mathfrak{C}}
\newcommand{\Gr}{\mathrm{Gr}}
\newcommand{\cM}{\mathcal{M}}
\newcommand{\cC}{\mathcal{C}}
\newcommand{\cV}{\mathcal{V}}
\newcommand{\cW}{\mathcal{W}}
\newcommand{\cT}{\mathcal{T}}
\newcommand{\cX}{\mathcal{X}}
\newcommand{\cF}{\mathcal{F}}
\newcommand{\cE}{\mathcal{E}}
\newcommand{\cY}{\mathcal{Y}}
\newcommand{\cP}{\mathcal{P}}
\newcommand{\ve}{\varepsilon}
 \newcommand{\cN}{\mathcal{N}}
\newcommand{\fa}{\mathfrak a}
\newcommand{\sh}{\mathrm{sh}}
\newcommand{\fb}{\mathfrak b}
\newcommand{\fx}{\mathfrak x}
\newcommand{\fv}{\mathfrak v}
\newcommand{\diam}{\mathrm{diam}}
\newcommand{\PgL}{\mathrm{P}\Gamma{\mathrm{L}}}
\newcommand{\LL}{\mathbb{L}}
\newcommand{\TT}{\mathbb{T}}
\newcommand{\QQ}{\mathbb{Q}}
\newcommand{\cG}{\mathcal{G}}
\newcommand{\ccQ}{\mathcal{Q}}
\newcommand{\RM}{\mathrm{RM}\,}
\newcommand{\trace}{\mbox{\itshape trace}}
\newcommand{\diag}{\mbox{\itshape diag}}
\newcommand{\spin}{\text{\itshape spin}}
\newcommand{\cha}{\text{\itshape char}}
\newcommand{\bE}{\mathbb E}
\newcommand{\fF}{\mathfrak F}
\newcommand{\fE}{\mathfrak E}
\newcommand{\fN}{\mathfrak N}
\newcommand{\fG}{\mathfrak G}
\newcommand{\baM}{\overline{M}}
\newcommand{\PG}{\mathrm{PG}}
\newcommand{\Sp}{\mathrm{Sp}}
\newcommand{\GL}{\mathrm{GL}}
\newcommand{\PGL}{\mathrm{PGL}}
\newcommand{\PGO}{\mathrm{PGO}}
\newcommand{\PGU}{\mathrm{PGU}}
\newcommand{\PSp}{\mathrm{PSp}}
\newcommand{\FF}{\mathbb{F}}
\newcommand{\ZZ}{\mathbb{Z}}
\newcommand{\NN}{\mathbb{N}}
\newcommand{\WW}{\mathbb{W}}
\newcommand{\bS}{\mathbb{S}}
\newcommand{\Rad}{\mathrm{Rad}}
\newcommand{\Res}{\mathrm{Res}}
\newcommand{\Fix}{\mathrm{Fix}}
\newcommand{\eps}{\varepsilon}
\newcommand{\Aut}{\mathrm{Aut}}
\newcommand{\lt}{\mathrm{lt}}
\newcommand{\gr}{\mathrm{gr}}
\newcommand{\er}{\mathrm{er}}
\newcommand{\aut}{\mathrm{Stab}}
\newcommand{\ch}{\mathrm{char}}
\newcommand{\rank}{\mathrm{rank}\,}
\newcommand{\N}{\mathcal{N}}
\newcommand{\ox}{\overline{x}}
\newcommand{\ov}{\overline{v}}
\newcommand{\oy}{\overline{y}}
\newcommand{\oU}{\widetilde{U}}
\newcommand{\oS}{\overline{S}}
\newcommand{\oM}{\overline{M}}
\newcommand{\ou}{\overline{u}}
\newcommand{\oV}{\overline{V}}
\newcommand{\oPi}{{\overline{\Pi}}_{\varphi}}
\newcommand{\oRad}{\overline{\mathrm{Rad}}(\varphi)}
\newcommand{\GF}{\mathrm{GF}}

\newcommand{\bZ}{\bf{0}}
\newcommand{\codim}{\mathrm{codim}\,}
\newcommand{\tV}{\tilde{V}}
\newcommand{\tve}{\tilde{\varepsilon}}
\theoremstyle{plain}

\newtheorem{lemma}{Lemma}[section]
\newtheorem{theorem}[lemma]{Theorem}
\newtheorem{corollary}[lemma]{Corollary}

\newtheorem{setting}[lemma]{Setting}
\theoremstyle{definition}
\newtheorem{definition}[lemma]{Definition}
\newtheorem{remark}[lemma]{Remark}

\def\pr{\noindent{\bf Proof. }}
\def\eop{\hspace*{\fill}$\Box$}

\newcommand{\fp}{\mathfrak p}
\newcommand{\fe}{\mathfrak e}
\newcommand{\fd}{\mathfrak d}
\newcommand{\fr}{\mathfrak r}

\begin{document}

\title{On orthogonal polar spaces}
\author{Ilaria Cardinali and  Luca Giuzzi }
\maketitle

\begin{abstract}
	Let $\cP$ be a non-degenerate polar space. In~\cite{ILP21b}, we introduced an intrinsic parameter of $\cP$, called the anisotropic gap, defined as the least upper bound of the lengths of the well-ordered chains of subspaces of $\cP$ containing a frame; when $\cP$ is orthogonal, we also defined two other parameters of $\cP$, called the elliptic and parabolic gap, both related to the universal embedding of	$\cP$.
	In this paper, assuming that $\cP$ is an orthogonal polar space, we prove that the elliptic and parabolic gaps can be described as intrinsic invariants	of $\cP$ without directly appealing to the embedding.
\end{abstract}
\leftline{{\bfseries MSC:} 51A50, 51B25, 51E24 }
\leftline{{\bfseries Keywords:} Polar Spaces, Quadratic forms, Embeddings, Hyperplanes.}

\section{Introduction}\label{introduction}
We assume that the reader is familiar with the concepts of polar spaces, projective embeddings and subspaces,  which we will briefly recall in Section~\ref{Prelim}.

Let $\cP:=(P,L)$ be a non-degenerate polar space, regarded as a point-line geometry.
We will say that any two points $p,q$ of $\cP$ are
	{\it collinear}, and write $p\perp q$, if there exists a line $\ell\in L$
incident with both $p$ and $q$.   For any $x\in P$ we shall write
$x^{\perp}$ for the set of all points of $\cP$ collinear with $x$; if $X\subseteq P, $ then $X^{\perp}:=\bigcap_{x\in X}x^{\perp}$.

A \emph{subspace} of $\cP$ is a subset $X\subseteq P$ such that every line containing at least two points of $X$ is entirely contained in $X$.
The intersection of all subspaces of $\cP$ containing a given subset
$S\subseteq P$ is a subspace called the \emph{span} of $S$ and denoted by $\langle S\rangle$. A {\em hyperplane} of $\cP$ is a proper subspace of $\cP$ meeting every line of $\cP$ non-trivially.
A subspace $X$ is {\it singular} if for all $x,y\in X$ we have $x\perp y$ (equivalently $X\subseteq X^{\perp}$).

The subspace $\cP^{\perp}: =\{ p\in P \colon p^\perp = P \}$ is called the \emph{radical} of $\cP$ and denoted by $\mathrm{Rad}(\cP)$. Obviously, $\mathrm{Rad}(\cP)$ is a singular subspace and it is contained in all maximal singular subspaces of $\cP$. The polar space $\cP$ is \emph{degenerate} precisely when $\mathrm{Rad}(\cP) \neq \emptyset$.

Every subspace $S$ of $\cP$ can be naturally endowed with the structure of a polar space by taking as points, the points of $S$ and as lines all lines of $\cP$ fully contained in $S$.
In particular, a singular subspace, regarded as a polar space, coincides with its own radical.

In this paper we shall always assume that $\cP$ is non-degenerate.
It is well-known that all singular subspaces of a non-degenerate polar
space are projective spaces; see~\cite[Theorem 7.3.6]{S}.
A hyperplane $\cH$ is called \emph{singular} if $\cH=p^\perp$ for $p\in \cP$.
If $\cP$ is non-degenerate, then any degenerate hyperplane $\cH$ of
$\cP$ is singular, i.e.\ $\cH=p^{\perp}$ where $\mathrm{Rad}(\cH)=\{p\}$.

The rank of a polar space $\cP$, usually denoted by $\rank(\cP)$, is
defined as the least upper bound of the lengths of the
well ordered chains of singular subspaces contained in it with $\emptyset$
regarded as the smallest singular subspace.
We recall that the \emph{length} of a
chain is its cardinality, diminished by $1$ when the chain is finite.
In particular, the rank of an infinite well ordered chain is just its  cardinality, namely the cardinality of the ordinal number representing
the isomorphism class of the chain itself.

The rank of a projective space is its generating rank, namely its dimension augmented by $1$. The \emph{rank} $\rank(X)$ of a singular subspace $X$ of $\cP$ is its rank as a projective space.

If the maximal singular subspaces of $\cP$ have all finite rank, then they all have the same rank. This common rank coincides with $\rank(\cP)$ according to the definition above.
For the sake of completeness, when $\cP$ admits singular subspaces of infinite rank we put $\rank(\cP) = \infty$ even if we shall not deal with this case in the present paper.
Polar spaces of rank $2$ are called
\emph{generalized quadrangles}. Polar spaces of rank $1$ are just sets
of pairwise non-collinear points.

Henceforth we shall always assume that $\cP$ has finite rank, say $n$.
Then, the hyperplanes of $\cP$ have rank either $n-1$ or $n$.

Since the rank of $\cP$ is finite, for any maximal singular subspace $M$ there exists another maximal singular subspace $M'$ disjoint from $M$.
Fixed a basis $(p_1,\dots, p_n)$ of $M$ (by {\it basis} of a projective space we mean a minimal generating set), there is a unique basis
$(p'_1,\dots, p'_n)$ of $M'$ such that $p_i\perp p'_j$ if and only if $i\neq j$. Such a pair $\{(p_1,\dots, p_n),(p'_1,\dots, p'_n)\}$ of generating
sets is called a {\em frame} of $\cP$ (see \cite{PasDG},~\cite{BC}).
In general, if $\cP$ is embeddable of rank $n$, a set spanning $\cP$ must
contain at least $2n$ points, the cardinality of a frame. In this case,
the frame is a minimal set spanning a subspace of $\cP$ with the same
rank as $\cP$.
Note that there are non-embeddable subspaces of rank $3$ which are spanned by a number of points smaller than the cardinality of a frame~\cite{P-VM}.

In~\cite{ILP21b}, we introduced an intrinsic parameter of $\cP$ called the \emph{anisotropic gap} (there under the name of {\it anisotropic defect}) of $\cP$  as follows; see also~\cite{Srv23}. Let $\mathfrak{N}(\cP)$ be the family, ordered by inclusion, of the well-ordered chains of subspaces of $\cP$ containing a frame.
The {\em anisotropic gap} $\mathrm{gap}({\cP})$ of $\cP$ is the least upper bound of the lengths of the elements of $\mathfrak{N}(\cP)$, i.e
\begin{equation}\label{anisotropic gap}
	\mathrm{gap}(\cP)=\sup\{|C|-1\colon C\in \mathfrak{N}(\cP)\}.
\end{equation}
In other words,
the anisotropic gap of $\cP$ tells us how ``far'' $\cP$ is from any of its subspaces spanned by frames.

A polar space is \emph{classical} if it is non-degenerate and
it admits the universal embedding; see Section~\ref{Prelim}.
Suppose that  $\cP$ is a classical polar space and let $\varepsilon:\cP\rightarrow\PG(V)$ be its universal embedding.
Call $\KK$ the underlying division ring of $V$.
In this case, $\varepsilon(\cP) = \cP(f)$ with $f$ a non-degenerate alternating form or $\varepsilon(\cP) = \cP(\phi)$ for a non-degenerate pseudoquadratic form $\phi$.
We recall that the {\it sesquilinearization} $f_\phi$ of $\phi$ (henceforth denoted by just $f$ for simplicity) can be degenerate only if $char(\KK)=2.$
When $\phi$ is a non-degenerate quadratic form, $\cP(\phi)$ is called a {\it non-degenerate orthogonal polar space}; in this case $\cP(\phi)$ is defined over a field.

Clearly, when considering two orthogonal vectors or subspaces of $V$, we refer to orthogonality with respect to $f$ or with respect to the sesquilinearization of $\phi$, according to the case.

Given a frame $A$ of $\cP$, its image $\varepsilon(A)$ spans a $2n$-dimensional subspace $H$ of $V$ which splits as the direct sum $V_1\oplus V_2\oplus\dots\oplus V_n$ of mutually orthogonal $2$-dimensional subspaces $V_1, V_2,\dots, V_n$. These subspaces bijectively correspond to the $n$ pairs of non-collinear points of $A$ and appear as lines in $\PG(V)$.
Denote by $[V_i]$ the projective line corresponding to the vector
subspace $V_i$ for $i=1,\dots,n$.
The preimages $\varepsilon^{-1}([V_1]),\dots, \varepsilon^{-1}([V_n])$ are hyperbolic lines of $\cP$ (a {\em hyperbolic line} of a polar space being the double perp ${\{p,q\}}^{\perp\perp}$ of two non-collinear points $p$ and $q$).
If $V_0$ denotes an orthogonal complement of $H=V_1\oplus\dots \oplus V_n$ in $V$, then $V_0$ is $\phi$-anisotropic, i.e.\ $\phi(x)\neq 0,\, \forall x\in V_0\setminus\{\mathbf{0}\}$.
If  $f$ is degenerate but $\cP$ is not,
then $V_0 \supseteq \mathrm{Rad}(f)$. In this case, suppose that $V_0'$ is a complement of $\mathrm{Rad}(f)$ in $V_0$, that is $V_0=V_0'\oplus \mathrm{Rad}(f)$.
We have the following orthogonal decomposition
\begin{equation}\label{decomposition}
	V ~ = ~ H\oplus V_0 ~ = ~  (V_1 \oplus V_2 \oplus \dots \oplus V_n)\oplus V_0 ~ = ~ (V_1 \oplus V_2 \oplus \dots \oplus V_n)\oplus V_0'\oplus \mathrm{Rad}(f).
\end{equation}

In~\cite{ILP21b}, we proved the following
\begin{theorem}\label{plain}
	Let $\cP$ be a classical non-degenerate polar space. Then the anisotropic gap of $\cP$ is precisely the codimension of $V_1 \oplus V_2 \oplus \dots \oplus V_n$ in $V$, i.e.\ $\mathrm{gap}(\cP) =\dim(V_0)$. Moreover, every well ordered chain of $\mathfrak{N}({\cP})$ is contained in a maximal well ordered chain and all maximal well ordered chains of $\mathfrak{N}({\cP})$ have the same length, namely $\mathrm{gap}(\cP)$.
\end{theorem}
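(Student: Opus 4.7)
The plan is to transfer the whole problem to linear algebra in $V$ via the universal embedding $\varepsilon$, exploiting the orthogonal decomposition~(\ref{decomposition}). Fix a frame $A$ of $\cP$ and write $H=V_1\oplus\dots\oplus V_n=\langle\varepsilon(A)\rangle$. For every subspace $S$ of $\cP$ containing $A$, set $\Phi(S):=\langle\varepsilon(S)\rangle$, a vector subspace of $V$ with $H\subseteq\Phi(S)\subseteq V$. A standard property of the universal embedding is that $S=\varepsilon^{-1}(\PG(\Phi(S))\cap\varepsilon(\cP))$, so $\Phi$ sends distinct subspaces of $\cP$ through $A$ to distinct vector subspaces of $V$ containing $H$. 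I would start by proving the upper bound $\mathrm{gap}(\cP)\le\dim V_0$: pushing a well-ordered chain $C\in\mathfrak{N}(\cP)$ (assumed to run through $A$) forward via $\Phi$ yields a strict well-ordered chain of vector subspaces lying between $H$ and $V$, and the length of any such chain is bounded by $\dim(V/H)=\dim V_0$.

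For the reverse inequality I would exhibit an explicit chain of length $\dim V_0$. Pick a well-ordered basis $\{v_\gamma\}_{\gamma<\delta}$ of $V_0$, where $\delta=\dim V_0$, and set $W_\alpha:=H+\langle v_\gamma:\gamma<\alpha\rangle$ together with $S_\alpha:=\varepsilon^{-1}(\PG(W_\alpha)\cap\varepsilon(\cP))$ for $\alpha\le\delta$. The heart of the argument is the strict inclusion $S_\alpha\subsetneq S_{\alpha+1}$: it suffices to produce a singular point $[x]$ with $x\in W_{\alpha+1}\setminus W_\alpha$. Taking $x=h+v_\alpha$ with $h\in H$, the orthogonality $H\perp V_0$ reduces the singularity condition to $\phi(h)=-\phi(v_\alpha)$ (or its pseudoquadratic analogue), which is solvable because $\phi$ restricted to any hyperbolic plane $V_i\subset H$ represents every scalar. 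At limit ordinals one obtains $S_\alpha=\bigcup_{\beta<\alpha}S_\beta$ from $W_\alpha=\bigcup_{\beta<\alpha}W_\beta$ together with the identity $\PG(W_\alpha)\cap\varepsilon(\cP)=\bigcup_{\beta<\alpha}(\PG(W_\beta)\cap\varepsilon(\cP))$. This produces a well-ordered chain in $\mathfrak{N}(\cP)$ of length $\delta$, proving $\mathrm{gap}(\cP)=\dim V_0$.

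For the \emph{moreover} part, every chain extends to a maximal one by Zorn's lemma applied to $\mathfrak{N}(\cP)$ ordered by inclusion. Let $(S_\alpha)$ be a maximal chain with associated chain $(W_\alpha):=(\Phi(S_\alpha))$ in $V$. Maximality of $(S_\alpha)$ forces $(W_\alpha)$ to be an unrefinable well-ordered chain from $H$ to $V$: if some jump $W_\alpha\subsetneq W_{\alpha+1}$ admitted an intermediate subspace $W$ with $\dim(W/W_\alpha)=1$, then the same singular-vector construction would produce $T:=\varepsilon^{-1}(\PG(W)\cap\varepsilon(\cP))$ with $S_\alpha\subsetneq T\subsetneq S_{\alpha+1}$, contradicting maximality; failure of continuity at a limit ordinal is ruled out analogously by inserting the union subspace. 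Any unrefinable well-ordered chain of vector subspaces from $H$ to $V$ has length exactly $\dim(V/H)=\dim V_0$, so every maximal chain in $\mathfrak{N}(\cP)$ has length $\mathrm{gap}(\cP)$.

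The main obstacle will be the singular-vector construction at each successor step: producing singular points in $W_{\alpha+1}\setminus W_\alpha$ (and in the intermediate positions required for the maximality argument) relies crucially on the universality of $\phi|_{V_i}$ together with the orthogonality $H\perp V_0$, and will require extra care in the pseudoquadratic case, where $V_0\supseteq\mathrm{Rad}(f)$, and in characteristic $2$. A secondary issue is the transfinite bookkeeping at limit ordinals, which matters because $\dim V_0$ may be infinite; this is routine but must be handled explicitly for both the construction of a chain of length $\dim V_0$ and for the ``unrefinability'' step in the maximality argument.
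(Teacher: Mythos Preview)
The paper does not prove Theorem~\ref{plain}; it is quoted from~\cite{ILP21b}. So there is no in-paper proof to compare against directly. That said, the constructive parts of your plan match closely the arguments the present paper uses for the related Lemmas~\ref{ln} and~\ref{parabolic defect}: push nice subspaces to vector subspaces of $V$ via the universal embedding, build chains by adjoining one basis vector at a time, and use the hyperbolic summands of $H$ to manufacture singular points in each new layer. Your singular-vector step and your handling of limit ordinals are both fine.

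There is, however, a genuine gap in the ``moreover'' part. You write that every chain extends to a maximal one ``by Zorn's lemma applied to $\mathfrak{N}(\cP)$ ordered by inclusion''. Zorn does not apply here: the union of an $\subseteq$-chain of well-ordered chains need not be well-ordered. Concretely, if $\dim V_0$ is infinite, choose nice subspaces $\cdots\subsetneq S_{-2}\subsetneq S_{-1}\subsetneq S_0$ and set $C_k=\{S_{-k},\dots,S_0\}$; each $C_k\in\mathfrak{N}(\cP)$, the $C_k$ form an $\subseteq$-chain, but no element of $\mathfrak{N}(\cP)$ can contain $\{S_{-k}:k\ge 0\}$ since the latter has no minimum. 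So the hypothesis of Zorn fails. The fix is to abandon Zorn and argue constructively, exactly as in Lemma~\ref{parabolic defect}: given a well-ordered chain $C$, push it to a well-ordered chain of subspaces between $H$ and $V$, refine \emph{that} to an unrefinable well-ordered chain by inserting, at each successor jump and at each discontinuous limit, a maximal chain built from a basis of the quotient, and finally pull back via $\varepsilon^{-1}$. Your singular-vector construction then shows the pulled-back chain is strict at every step, hence maximal in $\mathfrak{N}(\cP)$, and your cardinality argument gives its length as $\dim V_0$.
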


In the same paper, for $\ch(\KK)=2$, we mentioned two more parameters called \emph{parabolic} and \emph{elliptic gaps}  of $\cP$ (there called parabolic and elliptic defects).

The parabolic gap corresponds to the dimension of the
radical $\mathrm{Rad}(f)$ and the elliptic gap is
defined as $\dim(V_0')$ (see~\eqref{decomposition}).  We point out that our definition of
\emph{parabolic gap} corresponds to the definition of \emph{corank}
of the form $\phi$ in~\cite{T} when the form $\phi$ is non-degenerate.

It turns out that the notion of parabolic gap is also closely
related to the existence of projective embeddings of a polar space
different from the universal one (see Section~\ref{section finale}).

In the present paper, assuming that $\cP$ admits the universal embedding, we provide an intrinsic characterization of
the notions of parabolic and elliptic gap of $\cP$ along the lines of Theorem~\ref{plain}, thus answering Problem 5.2  of~\cite{ILP21b}.

We will also give a characterization of orthogonal polar spaces
(see Definition~\ref{oeh})
without explicit reference to the quadratic form describing them.
Once more, we only require the existence of the universal embedding.

The main motivation of this paper is to continue on the project started in~\cite{ILP21b}, aimed to offer an embedding-free definition for
the notion of ``dimension'' of a classical polar space $\cP$.
The most natural way is to identify the dimension of $\cP$ with the vector dimension of its universal embedding, i.e.\ its embedding rank. Hence $\dim(\cP)=2n + \fd$, where $n$ is the rank of $\cP$ (for which we have an intrinsic definition) and $\fd$ is the dimension of a complement of the space spanned by a frame. In~\cite{ILP21b}  we proved that $\fd$ can be defined in an embedding-free way by the length of well-ordered chains of suitable subspaces of $\cP$, thus leading to the definition of anisotropic gap. Here, we continue the job focusing on orthogonal polar spaces in characteristic $2$, where $\fd$ is in turn the sum of two terms (the elliptic and the parabolic gap) which we characterize intrinsically respectively in Theorems~\ref{m:1} and~\ref{m:2}.
Our characterizations provide some insight on the
requirements for two orthogonal polar spaces to  be isomorphic and
provide the groundwork for some future research.
Indeed,
in many cases, even in characteristic $0$, it is not sufficient for two
orthogonal polar spaces to have both the same rank and the same
anisotropic gap in order to
be isomorphic; for instance there are non-isomorphic orthogonal
polar spaces over the rational field $\mathbb Q$ which have
the same parameters; on the other hand, two orthogonal
polar spaces with the same rank and anisotropic gap over the real field $\mathbb R$ are isomorphic.

We aim to further investigate the relationship between
the fields involved, the gaps we defined, and isomorphism classes in
future works.
Furthermore, the notions we introduce in the present paper
can be extended also to polar spaces described by
hermitian  forms in characteristic $2$ (over non-commutative
division rings) as well as to some non-embeddable cases. We leave this
study to a future work.

We warn the reader that the results of the present paper all
encompass
the case in which the gaps are possibly infinite,
where a Witt-like decomposition of the quadratic form describing
the universal embedding might not be easily manageable.
None the less, as
mentioned above, we restrict our current analysis to spaces of finite
rank $n$.

Throughout the paper we use greek letters to denote ordinals related to
chains and blackletter characters for cardinal numbers which might possibly
be infinite.

\medskip
We will say that an embedded non-degenerate orthogonal polar space $\cP(\phi)$ of rank $n$ is \emph{$(\fe,\fp)$-orthogonal} if in the decomposition~\eqref{decomposition}
$\dim(V_0')=\fe$ and $\dim(\mathrm{Rad}(f))=\fp$.
More in particular, we say that $\cP(\phi)$ is
of \emph{hyperbolic type} if $\fe=\fp=0$; so
$\cP(\phi)$ is spanned by a frame and
\[V ~ = V_1 \oplus V_2 \oplus \dots \oplus V_n;\]
it  is \emph{elliptic} if $\fe>0$ but $\fp=0$, that is
$\cP(\phi)$ is not hyperbolic and the bilinear form $f$ polarizing $\phi$ is non-degenerate, i.e.\ $\mathrm{Rad}(f)=\{\mathbf{0}\}$ and $V_0=V_0'$
in~\eqref{decomposition}:
\[V ~ = (V_1 \oplus V_2 \oplus \dots \oplus V_n)\oplus V_0';\]
it is \emph{parabolic} if $\fe=0$ and $\fp>0$, that is it is not hyperbolic and $\mathrm{Rad}(f)=V_0$, i.e.\ $\mathrm{Rad}(f)$ is a non-trivial direct complement of
$H=\bigoplus_{i=1}^{n}V_i$ in $V$, i.e.
\[V ~ = (V_1 \oplus V_2 \oplus \dots \oplus V_n)\oplus \mathrm{Rad}(f).\]
The aim of this paper is to provide an
intrinsic description of $(\fe,\fp)$-orthogonal spaces without mentioning
the embedding; for this, we refer to  Corollary~\ref{d112}.
We observe that in general there
exist $(\fe,\fp)$-orthogonal spaces where both $\fe>0$ and $\fp>0$.

The following results, to be proved in Section~\ref{hyperb lines}, characterize hyperbolic and elliptic orthogonal polar spaces relying on the cardinality of their hyperbolic lines.
\begin{theorem}\label{thm hyperb lines}
	Let $\cP$ be an embeddable polar space  and $\varepsilon$ a relatively universal embedding of $\cP$. The embedded polar space
	$\varepsilon(\cP) $ is $(\fe,0)$-orthogonal (i.e.\ either of hyperbolic or
	elliptic type) if and only if the hyperbolic lines of $\cP$ contain exactly $2$ points.
\end{theorem}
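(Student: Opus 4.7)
The plan is to translate the intrinsic $\perp$-operation of $\cP$ into form-theoretic terms via $\varepsilon$, and then to perform an explicit count in coordinates coming from a frame. The key translation to be used throughout is that for two points $p,q$ of $\cP$, collinearity $p\perp q$ is equivalent to $f(\varepsilon(p),\varepsilon(q))=0$, since $\varepsilon(p),\varepsilon(q)$ are automatically $\phi$-singular.

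Fix non-collinear points $p,q$ of $\cP$ and set $W:=\langle\varepsilon(p),\varepsilon(q)\rangle\subseteq V$. Since $p\not\perp q$ forces $f(\varepsilon(p),\varepsilon(q))\neq 0$, the subspace $W$ is a hyperbolic plane whose only $\phi$-singular points are $\varepsilon(p)$ and $\varepsilon(q)$. I would extend $\{p,q\}$ to a frame of $\cP$ (a standard Witt-type extension) and apply the decomposition~\eqref{decomposition} with $V_1=W$; writing $W^{\perp_f}$ for the $f$-orthogonal complement of $W$ in $V$, this yields
\[
W^{\perp_f}\ =\ V_2\oplus\cdots\oplus V_n\oplus V_0'\oplus\mathrm{Rad}(f).
\]
The central claim is then
\[
\{p,q\}^{\perp\perp}\ =\ \varepsilon^{-1}\bigl((W+\mathrm{Rad}(f))\cap\cP(\phi)\bigr).
\]
Its proof reduces to two observations: first, $W^{\perp_f}$ is linearly spanned by its $\phi$-singular vectors, so that $\langle\varepsilon(\{p,q\}^{\perp})\rangle=W^{\perp_f}$; second, $W^{\perp_f\perp_f}=W+\mathrm{Rad}(f)$, an immediate check in the decomposition. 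The first observation uses that each $V_i$ with $i\geq 2$ is generated by its two singular points, while any vector of $V_0'\oplus\mathrm{Rad}(f)$ is a difference of two singular vectors in $W^{\perp_f}$ because $\phi|_{V_2}$ surjects onto $\KK$.

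With the formula for $\{p,q\}^{\perp\perp}$ in hand, both directions follow at once. If $\fp=0$ then $\mathrm{Rad}(f)=\{\mathbf{0}\}$ and the hyperbolic line is $W\cap\cP(\phi)=\{\varepsilon(p),\varepsilon(q)\}$, exactly two points. Conversely, if $\fp>0$ choose $r\in\mathrm{Rad}(f)\setminus\{\mathbf{0}\}$; non-degeneracy of $\phi$ forces $\alpha:=\phi(r)\neq 0$. Fixing a singular basis $e_1,e_2$ of $W$ with $f(e_1,e_2)=1$, the fact that $r\in\mathrm{Rad}(f)$ kills all cross terms involving $r$, together with $\ch(\KK)=2$, gives
\[
\phi(ae_1+be_2+cr)\ =\ ab+c^2\alpha.
\]
The projective $\phi$-singular points in $W+\langle r\rangle$ therefore split into those with $c=0$ (recovering $\{\varepsilon(p),\varepsilon(q)\}$ via $ab=0$) and those with $c=1$ parametrised by $a\in\KK^*$ via $b=a^{-1}\alpha$, producing $|\KK|-1$ further points. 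Hence the hyperbolic line contains at least $|\KK|+1\geq 3$ points, establishing the contrapositive.

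The hard part is the span statement in the second paragraph: the passage from the abstract double perp $\{p,q\}^{\perp\perp}$ to the concrete intersection $(W+\mathrm{Rad}(f))\cap\cP(\phi)$ rests on the claim that the singular vectors of $W^{\perp_f}$ span it linearly. This is where the relative universality of $\varepsilon$ and the rank assumption $n\geq 2$ (so that $V_2$ exists) intervene; the remainder of the argument is book-keeping inside the decomposition~\eqref{decomposition}.
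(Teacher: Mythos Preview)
Your argument presupposes throughout that $\varepsilon(\cP)=\cP(\phi)$ for a \emph{quadratic} form $\phi$: the opening claim that the hyperbolic plane $W$ has only two $\phi$-singular points is already quadratic-specific, and your entire converse (``if $\fp>0$\ldots'') is phrased inside the orthogonal class. But the theorem is a characterization among \emph{all} embeddable polar spaces. A priori $\varepsilon(\cP)$ could be $\cP(f)$ for an alternating $f$ (char $\neq 2$) or $\cP(\phi)$ for a hermitian pseudoquadratic $\phi$; your contrapositive says nothing about these cases, so you have not shown that $2$-point hyperbolic lines force $\varepsilon(\cP)$ to be orthogonal in the first place. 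This is precisely the content of Case~A of the paper's Lemma~\ref{lemb}: there one shows that $2$-point hyperbolic lines make $\varepsilon(\cP)$ a Tallini set, and then invokes the Buekenhout--Ferrara Dentice characterization of quadrics to conclude that $\phi$ must be quadratic. That external input (or some substitute, e.g.\ an explicit count of singular points on $W$ in the hermitian and symplectic cases) is missing from your proof.

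Within the orthogonal class your approach is correct and in fact more explicit than the paper's. Your formula $\{p,q\}^{\perp\perp}=\varepsilon^{-1}\bigl((W+\mathrm{Rad}(f))\cap\cP(\phi)\bigr)$ is exactly the paper's equation~\eqref{hyperbolic lines-1} specialized to the orthogonal case, and where the paper (Case~B of Lemma~\ref{lemb}) argues indirectly that $\mathrm{Rad}(f)\neq\{\mathbf{0}\}$ would force a space of dimension $\geq 3$ spanned by only two singular points (contradicting~\cite[8.2.7]{T}), you instead exhibit the extra points $[ae_1+\alpha a^{-1}e_2+r]$ directly. So as a proof of ``orthogonal with $\fp=0$ $\Leftrightarrow$ $2$-point hyperbolic lines'' your argument is a clean alternative; it just does not, as it stands, deliver ``$2$-point hyperbolic lines $\Rightarrow$ orthogonal''.
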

In light of Theorem~\ref{thm hyperb lines}, it is possible to provide
an intrinsic characterization encompassing all orthogonal polar spaces,
as shown by the following corollary.
\begin{corollary}
	\label{pr:orth}
	Let $\cP$ be an embeddable polar space and $\varepsilon$ be
        a relatively universal embedding of $\cP$. The embedded polar space   $\varepsilon(\cP)$ is orthogonal i.e.\ $\varepsilon(\cP)=\cP(\phi)$ where
	$\phi$ is a quadratic form, if and only if any subspace  $\cF$ of $\cP$
	generated by a frame has the property that all its hyperbolic lines consist of exactly $2$ points.
\end{corollary}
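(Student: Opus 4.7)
The plan has two parts, one per direction of the biconditional. The key idea is to apply Theorem~\ref{thm hyperb lines} not to $\cP$ itself but to a subspace $\cF$ of $\cP$ spanned by a frame: even when $\cP$ has $\fp>0$ (the parabolic case), $\cF$ has $\fe=\fp=0$, so the earlier theorem bites.

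For the forward direction, assuming $\varepsilon(\cP)=\cP(\phi)$ with $\phi$ a quadratic form on $V$, I would fix a frame-spanned subspace $\cF$ and argue that $\varepsilon|_\cF$ embeds $\cF$ into $H=V_1\oplus\dots\oplus V_n$ and is relatively universal for $\cF$, since $\dim H=2n$ realises the minimum embedding dimension of a rank-$n$ polar space. The restriction $\phi|_H$ is then an orthogonal sum of hyperbolic planes, hence a hyperbolic quadratic form, so $\cF$ is $(\fe,\fp)$-orthogonal with $\fe=\fp=0$; applying Theorem~\ref{thm hyperb lines} to $\cF$ forces its hyperbolic lines to contain exactly two points.

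For the backward direction, given any frame-spanned $\cF$ all of whose hyperbolic lines have two points, I would apply Theorem~\ref{thm hyperb lines} to $\cF$ to obtain $\varepsilon(\cF)=\cP(\phi_H)$ for some quadratic form $\phi_H$ on $H$, necessarily hyperbolic because the anisotropic gap of $\cF$ is zero. The remaining task is to promote this local orthogonality of $\cF$ to a global one for $\cP$. By the classification of universal embeddings recalled in Section~\ref{Prelim}, $\varepsilon(\cP)$ is described either by a non-degenerate alternating form on $V$ or by a non-degenerate pseudoquadratic form $\Phi$ on $V$. The alternating case is excluded because the restriction of such a form to $H$ would produce hyperbolic lines in $\cF$ with $|\KK|+1\geq 3$ points, contradicting the hypothesis. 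In the pseudoquadratic case, the fact that $\Phi|_H=\phi_H$ is a genuine quadratic form over a commutative field should force the underlying division ring $\KK$ to be a field and the associated involution to be trivial, so that $\Phi$ collapses to a quadratic form $\phi$ on $V$, and hence $\varepsilon(\cP)=\cP(\phi)$ is orthogonal.

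The main obstacle will be this last step: showing that the commutativity of $\KK$ and the triviality of the involution that define the type of the universal embedding are already forced by their behaviour on the $2n$-dimensional subspace $H$. This is essentially a rigidity statement saying that the form type of $\varepsilon(\cP)$ is detected by any frame-spanned restriction, and it is this property that carries the final step of the proof.
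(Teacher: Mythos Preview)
Your approach is the paper's: both directions restrict to the frame-spanned subspace $\cF$ and invoke the two-point hyperbolic-line criterion there, then transport the conclusion back to $\cP$. One small slip in the forward direction: an embedding of \emph{minimum} dimension need not be relatively universal (relative universality means having no proper cover, i.e.\ being maximal, not minimal). Your conclusion is still correct, because a hyperbolic quadric has non-degenerate bilinearization and hence admits no proper quotient, so its $2n$-dimensional embedding is already (relatively) universal; alternatively, bypass Theorem~\ref{thm hyperb lines} and, as the paper does, simply observe that $\varepsilon(\cF)$ is a hyperbolic quadric in $[H]$ and appeal to Lemma~\ref{lemma1}, which needs no universality hypothesis on $\varepsilon|_\cF$.

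Your concern about the rigidity step is on target, and here you are in fact more explicit than the paper, which simply asserts that since $\varepsilon(\cF)=\cP(\bar\phi)$ for a quadratic $\bar\phi$, the form describing $\varepsilon(\cP)$ must be quadratic as well. Your case split is the right way to complete this: the alternating case dies exactly as you say, and in the pseudoquadratic case it is cleaner to compute the hyperbolic line through two frame points directly from $\Phi|_H$ rather than to compare $\Phi|_H$ with $\phi_H$ as forms. Since $f|_{H\times H}$ is non-degenerate, that hyperbolic line is $\{[\mathbf{b}]\}\cup\{[\mathbf{a}+\mathbf{b}t]:t^\sigma+t=0\}$ (see the discussion preceding Lemma~\ref{lemb}), and this set has more than two points whenever $\sigma\neq\mathrm{id}_\KK$. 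The two-point hypothesis on $\cF$ therefore forces $\sigma=\mathrm{id}_\KK$, so $\KK$ is a field and $\Phi$ is a genuine quadratic form; the obstacle you flagged dissolves.
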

Using these results we can formulate the following definition of
orthogonal polar spaces which does not explicitly mention the
universal embedding.
\begin{definition}
	\label{oeh}
	A non-degenerate
	embeddable polar space $\cP$ is \emph{orthogonal} if the hyperbolic lines
	of any subspace $\cF$ of $\cP$ generated by a frame contain exactly $2$ points.
	A non-degenerate polar (sub)space $\cP$
	is \emph{hyperbolic} if it is orthogonal and generated by a
	frame;
	it is \emph{elliptic} if it is orthogonal but
	not hyperbolic and each of its hyperbolic lines consists of $2$
	points.
\end{definition}
We warn the reader that,
according to this definition, if $\mathrm{char}(\KK)\neq 2$, then the quadrics
of rank $n$ in dimension $2n+1$, usually called parabolic, will be here
named elliptic; see Remark~\ref{rem13} for more details on this choice.
\begin{definition}
  \label{dec}
  Suppose that $\cP$ is an orthogonal non-degenerate polar space.
  We say that a well ordered chain of subspaces
  \begin{equation}\label{elliptic chain}
		\fE: \cF=\cE_0\subset\cE_1\subset\dots\subset\cE_\mu\subset\cdots \subset \cP
	\end{equation}
	is an \emph{elliptic chain} of $\cP$ if any $\cE_i$ for $i\geq1$
	is  elliptic and $\cF$ is a hyperbolic subspace of $\cP$.
\end{definition}

\begin{remark}
	It might be possible to extend Definition~\ref{dec} of \emph{elliptic subspace} and
	\emph{elliptic chain} also to polar spaces which are not orthogonal, by stating that
	a subspace $\cE$ of $\cP$ containing a frame $F$ of $\cP$ is ``\emph{elliptic}'' if, called $\cF$ the subspace of $\cE$
	generated by $F$, we have $\cF\neq\cE$ and $\forall p,q\in\cF$ with
	$p\not\perp q$
	\[ {\{p,q\}}^{\perp\perp}\cap\cF={\{p,q\}}^{\perp\perp}\cap\cE, \]
	that is the hyperbolic lines of $\cF$ are also hyperbolic lines of $\cE$.
	As the group of a classical polar space of finite rank is transitive on the hyperbolic lines,
	this new definition for orthogonal polar spaces is equivalent to Definition~\ref{oeh}.
	On the other hand, non-elliptic subspaces might occur for non-orthogonal
	classical polar spaces $\cP$ of finite rank only if $\KK$ is a
	non-commutative division ring in characteristic $2$.
        We leave considering these cases to a future work.
\end{remark}

Our main result is the following theorem.
\begin{theorem}
	\label{m:1}
	Let $\cP$ be a non-degenerate orthogonal polar space of rank at least $2$ embeddable over a field $\KK$. Then all well ordered maximal elliptic chains of $\cP$ have the same cardinality $\fd$. In particular, if $\mathrm{char}(\KK)\neq 2$, then
	$\fd$ is precisely the anisotropic  gap of $\cP$ while if $\mathrm{char}(\KK)=2$, then all maximal
	subspaces of $\cP$ having the property that all their hyperbolic lines contain exactly $2$ points
	have the same
	anisotropic gap equal to $2\fd$ and $2\fd$ is precisely the codimension of $\mathrm{Rad}(f)$ in $V_0$, where $f$ is the bilinearization of a (relatively) universal embedding of $\cP$ and $V_0$
        is as in~\eqref{decomposition}.
\end{theorem}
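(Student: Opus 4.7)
The plan is to use the universal embedding $\varepsilon\colon \cP \to \PG(V)$ to translate the geometric notion of an elliptic subspace into a linear-algebraic non-degeneracy condition on the anisotropic complement $V_0$ appearing in \eqref{decomposition}, and then to count maximal chains of such subspaces. Fix a frame $A$ generating $\cF = \cE_0$ and let $H = V_1 \oplus \cdots \oplus V_n$ and $V_0$ be as in \eqref{decomposition}. A subspace $\cE$ of $\cP$ containing $\cF$ corresponds under $\varepsilon$ to a subspace $W = H \oplus U$ of $V$ with $U \subseteq V_0$, so any elliptic chain of $\cP$ starting at $\cF$ is determined by a strictly increasing chain $0 = U_0 \subsetneq U_1 \subsetneq U_2 \subsetneq \cdots$ of subspaces of $V_0$. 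Applying Theorem~\ref{thm hyperb lines} to each $\cE_i$ viewed as an embeddable polar space of rank $n$ with the inherited embedding, the ellipticity of $\cE_i$ amounts to $f$ being non-degenerate on $W_i$; since $H$ is $f$-non-degenerate and $H$ is $f$-orthogonal to $V_0$, this is equivalent to $f|_{U_i}$ being non-degenerate.

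If $\mathrm{char}(\KK) \neq 2$, then $\mathrm{Rad}(f) = 0$ and $f|_U$ is non-degenerate for \emph{every} subspace $U \subseteq V_0$; hence every $1$-dimensional extension inside $V_0$ produces an elliptic step. Consequently, maximal elliptic chains correspond bijectively to maximal chains of subspaces of $V_0$ (with $\cF$ prepended), all of the same length $\dim(V_0)$, which by Theorem~\ref{plain} equals the anisotropic gap of $\cP$.

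If $\mathrm{char}(\KK) = 2$, the form $f$ is alternating on $V$, and the induced form $\overline{f}$ on $V_0/\mathrm{Rad}(f) \cong V_0'$ is non-degenerate alternating. The condition ``$f|_{U_i}$ is non-degenerate'' translates into ``$U_i \cap \mathrm{Rad}(f) = 0$ and the image $\overline{U_i}$ in $V_0'$ carries a non-degenerate alternating form''. Since non-degenerate alternating forms exist only on even-dimensional spaces, each $\dim(U_i)$ must be even, so a maximal such chain is forced to increase by exactly $2$ at each step, ending at a subspace of $V_0'$ of dimension $\dim(V_0')$. This yields $\fd = \tfrac{1}{2}\dim(V_0')$ and $2\fd = \dim(V_0')$, which is the codimension of $\mathrm{Rad}(f)$ in $V_0$. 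The final assertion follows because, by Theorem~\ref{thm hyperb lines}, the maximal subspaces of $\cP$ all of whose hyperbolic lines contain exactly $2$ points correspond to maximal $f$-non-degenerate subspaces of $V$ containing $H$; these are precisely the subspaces $H \oplus V_0''$ where $V_0''$ is an arbitrary complement of $\mathrm{Rad}(f)$ in $V_0$, and Theorem~\ref{plain} applied to such a subspace returns anisotropic gap $\dim(V_0'') = 2\fd$.

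The main obstacle is the reduction in the first paragraph: verifying that the embedding of $\cE_i$ inherited from $\varepsilon$ is still (relatively) universal, so that Theorem~\ref{thm hyperb lines} can legitimately be applied to $\cE_i$, and that the correspondence between elliptic subspaces of $\cP$ containing $\cF$ and subspaces $U \subseteq V_0$ with $f|_U$ non-degenerate is genuinely bijective. Once this is in place, both characteristic cases collapse to the same transparent counting of chains of non-degenerate subspaces of $V_0$, differing only by the parity constraint that appears in characteristic $2$.
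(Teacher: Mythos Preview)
Your reduction to chains of $f$-non-degenerate subspaces $U\subseteq V_0$ is correct and more direct than the paper's route. The paper proceeds incrementally: Lemmas~\ref{l4} and~\ref{l5} show that a maximal proper nice subspace of an elliptic space has one-dimensional bilinear radical and that such a radical permits an elliptic hyperplane, Corollary~\ref{c1} and Lemma~\ref{lem pur elliptic} assemble the codimension-two steps into chains, and Lemma~\ref{lem:key} establishes that any maximal elliptic subspace $\cE$ satisfies $\langle\varepsilon(\cE)\rangle\oplus\mathrm{Rad}(f)=V$. Your approach collapses all of this into the linear algebra of the bilinear form on $V_0$: in characteristic $\neq 2$ the observation $f(x,x)=2\phi(x)\neq 0$ on $V_0\setminus\{\mathbf{0}\}$ makes every subspace non-degenerate, and in characteristic $2$ the parity constraint on non-degenerate alternating subspaces of $V_0/\mathrm{Rad}(f)$ is immediate. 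This is cleaner and treats both characteristics uniformly.

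Your diagnosis of the ``main obstacle'' is off, however. Relative universality of $\varepsilon|_{\cE_i}$ is not needed: the proofs of Lemmas~\ref{lemb} and~\ref{l1} use only that $\varepsilon(\cE_i)$ is a quadric in $[W_i]$, so the equivalence between ``all hyperbolic lines of $\cE_i$ have two points'' and ``$f|_{W_i}$ is non-degenerate'' holds for the restricted embedding directly. The bijection $\cE\leftrightarrow U$ is likewise immediate from Theorem~\ref{thm subspaces}. The step you actually skip is the assertion that a \emph{maximal} $f$-non-degenerate $U\subseteq V_0$ is necessarily a complement of $\mathrm{Rad}(f)$, equivalently that all maximal elliptic chains terminate at subspaces of the same dimension. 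This is exactly the content of Lemma~\ref{lem:key}; in your framework it amounts to the extension step ``if $\overline U\subsetneq V_0/\mathrm{Rad}(f)$, pick a hyperbolic pair in the orthogonal complement of $\overline U$ there and lift it back to $V_0$'', which you state as a fact but should justify.
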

We prove in Corollary~\ref{e:max} that any elliptic chain $\fE$
admits a maximal element $\cE_{\omega}$ which is an elliptic subspace.
In light of this and Theorem~\ref{m:1}, the \emph{elliptic gap}
of $\cP$, already introduced before, can be defined as follows.

\begin{definition}\label{elliptic gap}
	Let $\cP$ be an
	orthogonal polar space of rank at least $2$ and
	$\fE: \cF=\cE_0\subset\cE_1\subset\dots\subset\cE_{\omega}$
	be a maximal well-ordered
	elliptic chain of $\cP$ where $\cF$ is the subspace spanned by a frame
	and $\cE_{\omega}$ is a maximal element of $\fE$.
	A \emph{maximal enrichment} of $\fE$ is a (possibly non-elliptic)
	well ordered chain
	$\fE^E: \cX_0=\cE_0\subseteq\cX_1\subseteq\dots\subseteq\cX_{\theta}=\cE_{\omega}$,
	containing all elements of $\fE$,
	starting at $\cE_0$ and ending at $\cE_{\omega}$ and maximal with
	respect to these properties.
	
	The \emph{elliptic gap} of $\cP$ is the length of
	a maximal enrichment $\fE^E$ of a maximal elliptic chain $\fE$ of $\cP$.
\end{definition}
Observe that for any $0\leq i<\theta$, the space $\cX_{i}$ in $\fE^E$
is necessarily a maximal subspace of $\cX_{i+1}$.

Note also
that if $\mathrm{char}(\KK)\not=2$, then $\fE\equiv \fE^E$, i.e.\ a maximal
elliptic chain $\fE$ admits no proper enrichment, namely it is maximal as a chain of subspaces containing the span of a given frame; also,
in this case, the elliptic gap coincides with the anisotropic gap of $\cP.$
In contrast, if $\mathrm{char}(\KK)=2$, then $\fE$ always admits proper refinements and
in this case the elliptic gap of $\cP$ is twice the length of $\fE$ (of course this makes sense only if the elliptic gap is finite).
The anisotropic gap of $\cP$ is possibly larger.
In any case, the elliptic gap of $\cP$ is the anisotropic gap of
a maximal subspace of $\cP$ with the property that all of its hyperbolic lines contain exactly $2$ points, i.e.\ it is the anisotropic gap of a maximal elliptic subspace of $\cP$.

A consequence of Theorem~\ref{m:1} is the following.
\begin{theorem}
	\label{m:2}
	Let $\cP$ be a non-degenerate orthogonal
	polar space of rank at least $2$ and $\varepsilon\colon \cP\rightarrow \PG(V)$ be its universal embedding.
	Let $\fE^E$ be an enrichment of a maximal elliptic chain $\fE$ of $\cP$.
	Then
	$\dim({\mathrm{Rad}}(f))=\dim\left(V/\left\langle\varepsilon\left(
			\bigcup_{X\in \fE^E}X\right)\right\rangle\right)$.
\end{theorem}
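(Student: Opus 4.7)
The plan is to compute the dimension of the vector subspace $W := \langle \varepsilon(\cE_\omega)\rangle \subseteq V$, where $\cE_\omega$ denotes the maximum $\cX_\theta$ of the well-ordered chain $\fE^E$. Since $\cE_\omega$ is this maximum, the union $\bigcup_{X \in \fE^E} X$ coincides with $\cE_\omega$, and the target identity becomes $\dim(V/W) = \dim(\mathrm{Rad}(f))$.

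From $\cF \subseteq \cE_\omega$ we get $H = \langle\varepsilon(\cF)\rangle \subseteq W$; using the orthogonal decomposition $V = H \oplus V_0$ from~\eqref{decomposition}, we may therefore write $W = H \oplus W_0$ with $W_0 := W \cap V_0 \subseteq V_0$. Thus the statement reduces to showing $\dim(W_0) = \dim(V_0')$, since then $\dim(V/W) = \dim(V_0) - \dim(W_0) = \dim(V_0) - \dim(V_0') = \dim(\mathrm{Rad}(f))$.

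To obtain $\dim(W_0)$, I would exploit the assignment $\cS \mapsto \langle \varepsilon(\cS)\rangle$, which induces an order-preserving map from subspaces of $\cP$ lying between $\cF$ and $\cE_\omega$ to vector subspaces of $V$ lying between $H$ and $W$. Because $\fE^E$ is a maximal enrichment (so every $\cX_i$ is a maximal subspace of $\cX_{i+1}$), its image is a maximal chain of vector subspaces between $H$ and $W$, of length $\dim(W/H) = \dim(W_0)$. On the other hand, Theorem~\ref{m:1} together with the discussion after Definition~\ref{elliptic gap} identifies the length of $\fE^E$ with the elliptic gap of $\cP$, namely $\dim(V_0')$ (equal to $2\fd$ in characteristic $2$ and to $\fd$ otherwise). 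Combining these two facts gives $\dim(W_0) = \dim(V_0')$, as required.

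The main obstacle is precisely the length-preservation step: one must verify that if $\cX_i$ is a maximal subspace of $\cX_{i+1}$, then $\langle\varepsilon(\cX_i)\rangle$ has codimension exactly $1$ in $\langle\varepsilon(\cX_{i+1})\rangle$, so that the image of the chain $\fE^E$ is a maximal chain of vector subspaces from $H$ to $W$. In characteristic $2$ this requires particular care because $V_0$ may contain $\phi$-anisotropic directions that do not correspond to points of $\cP$, so the interplay between \emph{maximality in the polar subspace lattice} and \emph{codimension one in the vector subspace lattice} is more delicate. A cleaner alternative route would be to invoke Theorem~\ref{plain} applied to $\cE_\omega$ seen as a classical polar space in its own right, once one checks that the restriction of $\varepsilon$ to $\cE_\omega$ is a relatively universal embedding of $\cE_\omega$; the anisotropic gap of $\cE_\omega$ would then directly equal $\dim(W_0)$.
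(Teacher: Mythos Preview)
Your chain-length approach is more circuitous than necessary and has a genuine gap in the infinite-dimensional case, which the paper explicitly allows (see Remark~\ref{r:111}). The obstacle you flag---that a maximal polar subspace inclusion $\cX_i\subset\cX_{i+1}$ yields a codimension-one inclusion of spans---is real but surmountable: nice hyperplanes arise from projective hyperplanes, a fact used repeatedly in Section~\ref{sec 3} (e.g.\ in the proof of Lemma~\ref{l4}). The deeper problem is your cardinal arithmetic. From $\dim(W_0)=\dim(V_0')$ you want to conclude $\dim(V_0/W_0)=\dim(V_0/V_0')=\dim(\mathrm{Rad}(f))$, but when these dimensions are infinite this inference fails: two subspaces of $V_0$ of equal infinite dimension can have quotients of entirely different dimensions. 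Your alternative route via Theorem~\ref{plain} applied to $\cE_\omega$ has the same defect: it would recover $\dim(W_0)$, but not the relation between $W_0$ and $\mathrm{Rad}(f)$ inside $V_0$, and without that you cannot pass to $\dim(V/W)$.

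The paper sidesteps all of this by invoking Lemma~\ref{lem:key} directly. Since $\cE_\omega$ is a maximal elliptic subspace of $\cP$, that lemma gives that $W=\langle\varepsilon(\cE_\omega)\rangle$ is a \emph{direct complement} of $\mathrm{Rad}(f)$ in $V$, i.e.\ $V=W\oplus\mathrm{Rad}(f)$; the desired equality $\dim(V/W)=\dim(\mathrm{Rad}(f))$ is then immediate, with no subtraction of cardinals and no chain-length bookkeeping. This complementation statement is exactly the ingredient missing from both of your routes, and it is the real content of the proof.
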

\begin{remark}
	\label{r:111}
	If we call $\fr$ the anisotropic gap of $\cP$ and
	$\fd$ its elliptic gap and both $\fr$ and $\fd$ are
	finite, then
	the statement of Theorem~\ref{m:2} reads as
	$\dim(\mathrm{Rad}(f))=\fr-\fd$.
	When $\fr$ and $\fd$ are possibly infinite cardinals
	the statement
	should be read as follows: denote by $\fr$ the anisotropic gap
	of $\cP$ and
	let $\cE$ be a maximal elliptic subspace of $\cP$ of
	anisotropic gap $\fd$; then there exists a maximal chain ${\mathfrak M}$
	of
	subspaces of $\cP$ containing a frame of $\cP$ as well as $\cE$
	and this chain has length $\fr$.
	Let ${\mathfrak z}$ be the length
	of the subchain of $\mathfrak M$ from $\cE$ to $\cP$; then
	${\mathfrak z}+\fd=\fr$.
\end{remark}
\begin{definition}
	By Theorem~\ref{m:2}, the
	\emph{parabolic gap} of $\cP$ can be defined as the
	cardinality of any maximal well ordered chain $\mathfrak M$
	of subspaces of $\cP$
	all containing a space $\cE:=\bigcup_{X\in \fE}(X)$, where $\fE$ is any
	maximal well ordered elliptic chain of $\cP$.
\end{definition}

In light of what we have said so far, the following corollary
shows that the notion of $(\fe,\fp)$-orthogonal space can be
formulated without any explicit
mention of its embeddings and, as such, is intrinsic to the space.

\begin{corollary}
	\label{d112}
	Let $\cP$ be a non-degenerate embeddable
	polar space such that the hyperbolic lines
	of a subspace $\cF$
	generated by a frame of $\cP$ consist of only two points.
	Then $\varepsilon(\cP)$ is an $(\fe,\fp)$-orthogonal polar space
	where $e$ and $p$ are respectively the elliptic and the parabolic
	gaps of $\cP$.
	In particular, if $\fe=\fp=0$, the space $\cP$ is hyperbolic;
	if $\fe>0$ and $\fp=0$, then $\cP$ is elliptic and if $\fe=0$ and $\fp>0$
	the space is parabolic.
\end{corollary}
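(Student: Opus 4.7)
The plan is to derive this corollary as a straightforward consequence of the three preceding results in the introduction: Corollary~\ref{pr:orth}, Theorem~\ref{m:1}, and Theorem~\ref{m:2}. Since by hypothesis the hyperbolic lines of a subspace $\cF$ generated by a frame consist of exactly two points, Corollary~\ref{pr:orth} immediately yields that $\varepsilon(\cP)=\cP(\phi)$ for some quadratic form $\phi$, so that $\varepsilon(\cP)$ is orthogonal. Fixing a frame and the associated decomposition~\eqref{decomposition}, $V=H\oplus V_0'\oplus \mathrm{Rad}(f)$, and setting $\fe':=\dim(V_0')$ and $\fp':=\dim(\mathrm{Rad}(f))$, the space $\varepsilon(\cP)$ is by definition $(\fe',\fp')$-orthogonal. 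The remaining task is to identify $\fe'$ and $\fp'$ with the intrinsic elliptic gap $\fe$ and parabolic gap $\fp$ of $\cP$.

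For the elliptic component, I would invoke Theorem~\ref{m:1}, which identifies the length of a maximal elliptic chain $\fE$ of $\cP$ (equivalently, the anisotropic gap of a maximal elliptic subspace of $\cP$) with the codimension of $\mathrm{Rad}(f)$ in $V_0$, that is with $\dim(V_0')=\fe'$. In characteristic different from $2$ a maximal elliptic chain already coincides with its maximal enrichment $\fE^E$, so Definition~\ref{elliptic gap} gives $\fe=\fe'$ directly; in characteristic $2$ one applies the observation that $\fE^E$ has twice the length of $\fE$, exactly matching the factor of $2$ appearing in Theorem~\ref{m:1}. Either way, $\fe=\fe'$.

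For the parabolic component, Theorem~\ref{m:2} gives the identity
\[
\dim(\mathrm{Rad}(f))=\dim\bigl(V/\langle \varepsilon(\cE_\omega)\rangle\bigr),
\]
where $\cE_\omega:=\bigcup_{X\in\fE^E}X=\bigcup_{X\in\fE}X$ is the top of a maximal elliptic chain. Since the parabolic gap of $\cP$ is defined as the length of a maximal well-ordered chain of subspaces of $\cP$ above $\cE_\omega$, and such a chain lifts under $\varepsilon$ to a maximal flag in the quotient $V/\langle\varepsilon(\cE_\omega)\rangle$, its length equals $\dim(\mathrm{Rad}(f))=\fp'$. Hence $\fp=\fp'$, which establishes the first assertion of the corollary. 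The three named subcases (hyperbolic, elliptic, parabolic) then read off immediately from the initial definition of $(\fe,\fp)$-orthogonal given in the introduction.

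The only real delicacy in carrying out this plan is the characteristic $2$ matching between a maximal elliptic chain $\fE$ and its maximal enrichment $\fE^E$: I would need to make precise the fact already noted after Definition~\ref{elliptic gap} that each step $\cE_i\subset \cE_{i+1}$ of $\fE$ necessarily admits a unique intermediate non-elliptic subspace, doubling the chain's length. Once this counting is handled, the statement reduces to rewriting the conclusions of Theorems~\ref{m:1} and~\ref{m:2} in intrinsic language, with no further content needed.
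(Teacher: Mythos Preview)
Your proposal is correct and matches the paper's intent: the paper gives no separate proof of Corollary~\ref{d112}, presenting it explicitly as an immediate consequence (``In light of what we have said so far'') of Corollary~\ref{pr:orth}, Theorem~\ref{m:1}, and Theorem~\ref{m:2}, which is exactly the route you take. One small caution: your parenthetical that the intermediate non-elliptic subspace between $\cE_i$ and $\cE_{i+1}$ is \emph{unique} is neither proved in the paper nor needed---what you actually require is only that some maximal enrichment has length $2\fd$, and this is supplied by Lemma~\ref{lem pur elliptic} via the dimension count $\dim(\langle\varepsilon(\cE_{i+1})\rangle/\langle\varepsilon(\cE_i)\rangle)=2$.
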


\begin{remark}
	\label{rem13}
	The terminology we have chosen for elliptic and  parabolic polar spaces is motivated by the terminology in use for quadrics over finite fields of characteristic $2$. Indeed, if $\KK=\mathrm{GF}(2^r)$ and $\cQ$ is a non-degenerate orthogonal polar space in $\PG(V)$, $V=V(n,\KK)$, described by a quadratic form $\phi$ with bilinearization $f$, then we have only two possibilities for $\dim(\mathrm{Rad}(f))$, i.e.\ $\dim(\mathrm{Rad}(f))=0$ or $\dim(\mathrm{Rad}(f))=1$.  In this case, the anisotropic space $V_0$ has dimension at most $2$. If $\dim(\mathrm{Rad}(f))=0$ and $V_0=0$, then $\cQ$ is usually called {\it hyperbolic quadric}; if $\dim(\mathrm{Rad}(f))=0$ and $\dim(V_0)=2$, then $\cQ$ is usually called {\it elliptic quadric}; if $\dim(\mathrm{Rad}(f))=1$ and $V_0=\mathrm{Rad}(f)$, then $\cQ$ is usually called {\it parabolic quadric}.
	
	When $\mathrm{char}(\KK)\neq 2$ our terminology is not the same as
	what is usually found in literature for quadrics.
	In our case, \emph{all} orthogonal polar spaces are either
	hyperbolic (i.e.\ generated by a frame) or elliptic and
	their anisotropic gap is the same as their elliptic gap.
		
	It is worth to mention that if $\KK$ is a non-perfect field of characteristic $2$ (i.e.\ $\KK$ has characteristic $2$ but not all elements of $\KK$ are squares in $\KK$), then there can exist non-degenerate $(\fe,\fp)$-orthogonal polar spaces which are neither hyperbolic, nor  elliptic nor parabolic, i.e.\ for them  $0\subset\mathrm{Rad}(f)\subset V_0$ and both $\fe,\fp>0$.
	
	Note that if $\cQ$ has $\dim(\mathrm{Rad}(f))=[\KK:\KK^2]<\infty$, where  $\KK^2=\{a^2\colon a\in \KK\}$, then $\cQ$ is parabolic, see Corollary~\ref{c:cpp}, but the converse is not true, i.e.\ there exist parabolic spaces (i.e.\ such that $\mathrm{Rad}(f)=V_0$) with $\dim(\mathrm{Rad}(f))<[\KK:\KK^2]$.
\end{remark}

\paragraph{Structure of the paper}
In Section~\ref{Prelim} we will give some basics on  embeddable polar spaces recalling the definitions and the relevant results from the literature. We will also set the notation.
In Section~\ref{hyperb lines} we shall focus on hyperbolic lines of $\cP$ thus proving Theorem~\ref{thm hyperb lines}. In Section~\ref{sec 3} we shall characterize the elliptic gap and we will prove Theorem~\ref{m:1}.
In Section~\ref{parabolic gap} we shall focus on the parabolic gap proving Theorem~\ref{m:2} and we shall characterize the  parabolic polar spaces.

\section{Preliminaries}\label{Prelim}

Let $\cP =(P, L)$ be a non-degenerate polar space and let $V$ be a vector space
over some division ring $\KK$.
A \emph{projective embedding} of $\cP$ is an injective map
$\varepsilon:\cP\rightarrow \PG(V)$ with the property that $\langle \eps(P)\rangle=\PG(V)$
and every line of $\cP$ is mapped onto a projective line of $\PG(V)$. So, $\varepsilon(L):={\{\varepsilon(\ell)\}}_{\ell\in L}$ is a set of lines of $\PG(V)$ and $\varepsilon(\cP) := (\varepsilon(P), \varepsilon(L))$ is a full subgeometry of (the point-line space of) $\PG(V)$.

If  $\eps_1:\cP \rightarrow \PG(V_1)$ and $\eps_2:\cP \rightarrow \PG(V_2)$ are two projective embeddings of $\cP$
we say that  $\eps_1$ \emph{covers} $\eps_2$ (in symbols, $\eps_2\leq \eps_1$) if  $V_1$ and $V_2$ are defined over the same division ring $\KK$ and there exists a $\KK$-semilinear mapping $\pi\colon V_1\rightarrow V_2$ such that $\eps_2= \pi\circ \eps_1$ (actually, writing $\pi\circ\eps_1$ is an abuse, since morphisms of projective spaces are involved here rather than their underlying semilinear maps, but this is a harmless abuse).
The map $\pi$ such that $\eps_2=\pi\circ\eps_1$, if it exists, is unique up to
rescaling. It is called the projection of $\eps_1$ onto $\eps_2$.
We say that $\eps_1$ and $\eps_2$ are
\emph{equivalent} (in symbols $\eps_1\simeq\eps_2$) if $\eps_1\leq \eps_2\leq \eps_1$; this is the same as to say that the projection $\pi:V_1\to V_2$ of $\eps_1$ onto $\eps_2$ is an isomorphism.

An embedding $\tilde{\varepsilon}$ is said to be \emph{relatively universal}  if $\bar{\varepsilon}\geq\tilde{\varepsilon}$ implies
$\bar{\varepsilon}\simeq\tilde{\varepsilon}$.
Note that every embedding $\varepsilon$ is covered by a relatively universal embedding (Ronan~\cite{Ron}), unique modulo equivalence. This relatively universal embedding is called the \emph{hull} of $\varepsilon$.

An embedding $\tilde{\varepsilon}$ of $\cP$ is \emph{absolutely universal} if it covers all projective embeddings of $\cP$. Clearly, when it exists, the universal embedding is unique up to equivalence and it is the hull of all embeddings of $\cP$. In this case, all embeddings of $\cP$ are necessarily defined over the same division ring.

The \emph{embedding rank} $\er(\cP)$ of $\cP$ is the least upper bound of the dimensions of the embeddings of $\cP$ (the maximal dimension of an embedding of $\cP$ when all these dimensions are finite and range in a finite set). Suppose that $\cP$ admits the absolutely universal embedding, say $\tilde{\varepsilon}$. Then $\dim(\tilde{\varepsilon}) =  \er(\cP)$ and, for any embedding $\varepsilon$ of $\cP$, if $\er(\cP)  < \infty$, then $\varepsilon\simeq\tilde{\varepsilon}$ if and only if $\dim({\varepsilon})=\er(\cP)$.

As proved by Tits~\cite[chp. 7-9]{T} (also Buekenhout and Cohen~\cite[chp. 7-11]{BC}), all thick-lined non-degenerate polar spaces of rank at least $3$ are embeddable except  for the following two exceptions, both of rank $3$: the line-grassmannian of $\PG(3,\KK)$ with $\KK$ a non-commutative division ring and a family of polar spaces of rank $3$ with non-desarguesian planes, described in~\cite[chp. 9]{T} (also Freudenthal~\cite{Fr}) which we call {\em Freudenthal-Tits polar spaces}; see also~\cite{P-VM} for this geometry.

Suppose now that $\cP$ is an embeddable non-degenerate polar space of finite rank $n \geq 2$. By Tits~\cite[chp. 8]{T}, the polar space $\cP$ admits the universal embedding but for the following two exceptions of rank $2$:
\begin{enumerate}[(E1)]
	\item\label{E1} $\cP$ is a grid with lines of size $s+1 > 4$, where $s$ is a prime power if $s < \infty$. If $\varepsilon:\cP\rightarrow\PG(V)$ is an embedding of $\cP$, then $V \cong V(4,\KK)$ for a field $\KK$ and $\varepsilon(\cP)$ is a hyperbolic quadric of $\PG(V)$. The field $\KK$ is uniquely determined by $\cP$ only if $s < \infty$.
	\item\label{E2}  $\cP$ is a generalized quadrangle admitting just two non-isomorphic embeddings $\varepsilon_1, \varepsilon_2:\cP\rightarrow \PG(3,\KK)$ for a quaternion division ring $\KK$ (the same for $\varepsilon_1$ and $\varepsilon_2$). We refer to~\cite[8.6]{T} for more on these examples
	which  we call \emph{bi-embeddable quaternion quadrangles}.
\end{enumerate}
Many polar spaces admit just a unique embedding, which is the absolutely universal one. The grids of case (E\ref{E1}) admit several
non-equivalent embeddings which are all relatively universal and
each of them is described by a quadratic form; as grids are generated
by frames, these turn out to be $(0,0)$-orthogonal polar spaces.

The bi-embeddable quaternion quadrangles of case (E\ref{E2}) are
generated by frames but their hyperbolic lines contain more than $2$
points, so they do not play any further role in our study here.

We recall that in characteristic $2$ there are more possibilities
for the embeddings of polar spaces (see Section~\ref{sesquilinear and pseudoquadratic form}).

\subsection{Sesquilinear and pseudoquadratic forms}\label{sesquilinear and pseudoquadratic form}
\label{s2}
Sesquilinear and pseudoquadratic forms are essential in order to describe
the projective embeddings of polar spaces. In particular, the universal embedding of a polar space
(when it exists) can always be described by either a sesquilinear or a quadratic form.

Given an anti-automorphism $\sigma$ of $\KK$, a $\sigma$-{\em sesquilinear form} is a function $f:V\times V\to\KK$ such that $f(\sum_ix_it_i, \sum_jy_js_j) = \sum_{i,j}t_i^\sigma f(x_i, y_j)s_j$ for any choice of vectors $x_i, y_j \in V$ and scalars $t_i, s_j \in \KK$. A sesquilinear form $f$ is said to be {\em reflexive} when for any two vectors $x, y\in V$ we have $f(x,y) = 0$ if and only if $f(y,x) = 0$.

Let $f$ be a non-trivial (i.e.\ not null) $\sigma$-sesquilinear form.
Then $f$ is reflexive if and only if there exists a scalar $\epsilon\in \KK^\ast$ such that $f(y,x) = {f(x,y)}^\sigma\epsilon$ for any choice of $x, y\in V$; this condition forces $\epsilon^\sigma = \epsilon^{-1}$ and $t^{\sigma^2} = \epsilon t\epsilon^{-1}$ for any $t\in \KK$ (see Bourbaki~\cite{Bour}). With $\epsilon$ as above, $f$ is called a $(\sigma,\epsilon)$-{\em sesquilinear form}. Clearly, $\epsilon \in \{1, -1\}$ if and only if $\sigma^2 = \mathrm{id}_\KK$; also, $\sigma = \mathrm{id_\KK}$ only if $\KK$ is commutative.
Let $\sigma= \mathrm{id}_\KK$. If $\epsilon = 1$, then $f$ is said to be {\em symmetric}; when $\mathrm{char}(\KK) \neq 2$, then $\epsilon = -1$ if and only if $f(x,x) = 0$ for any $x\in V$. In this case $f$ is said to be {\em alternating}. When $\mathrm{char}(\KK) = 2$, an {\em alternating} form is a $(\mathrm{id}_{\KK}, 1)$-form $f$ such that $f(x,x) = 0$ for every $x\in V$. A $(\sigma, \epsilon)$-form with $\sigma\neq\mathrm{id}_\KK$ and $\epsilon = 1$ (or $\epsilon = -1$) is called {\em hermitian} (respectively {\em antihermitian}).

Two vectors $v, w\in V$ are {\em orthogonal} with respect to $f$ (in symbols $v\perp_f w$) if $f(v,w) = 0$. A vector $v\in V$ is {\em isotropic} if $v\perp_f v$. A subspace $X$ of $V$ is {\em totally isotropic} if $X\subset X^{\perp_f}$. In contrast, if $\mathbf{0}$ is the unique isotropic vector of a subspace $X$ of $V$, then we say that $f$ is {\em anisotropic} over $X$ and that $X$ is anisotropic for $f$. The same terminology is adopted for points and subspaces of $\PG(V)$, in an obvious way. The subspace $\mathrm{Rad}(f) = V^{\perp_f}$ is the {\em radical} of $f$. The form $f$ is {\em degenerate} if $\mathrm{Rad}(f) \neq \{\mathbf{0}\}$.

The isotropic points of $\PG(V)$ together with the totally isotropic lines of $\PG(V)$ form a polar space $\cP(f)$. The singular subspaces of $\cP(f)$ are precisely the totally isotropic subspaces of $\PG(V)$ and the radical of $\cP(f)$ is the (subspace of $\PG(V)$ corresponding to) $\mathrm{Rad}(f)$. In particular, $\cP(f)$ is non-degenerate if and only if $f$ is non-degenerate.

If we are interested in the polar space $\cP(f)$ rather than in peculiar properties of the underlying form $f$, then we can always assume that $f$ is either alternating, symmetric or hermitian (or antihermitian, if we prefer).

Let us turn now to pseudoquadratic forms.
Let $\sigma$ and $\epsilon$ be as above, but with $\epsilon\neq-1$ when $\sigma = \mathrm{id}_\KK$ and $\mathrm{char}(\KK)\neq 2$. Put
$\KK_{\sigma,\epsilon}:= \{t-t^{\sigma}\epsilon\colon t\in\KK\}$. The set $\KK_{\sigma, \epsilon}$ is a subgroup of the additive group of $\KK$. Moreover $\KK_{\sigma,\epsilon}\subset \KK$, in view of the hypotheses we have assumed on $\sigma$ and $\epsilon$. Hence the quotient group $\KK/\KK_{\sigma,\epsilon}$ is non-trivial.

A function $\phi:V\to\KK/\KK_{\sigma,\epsilon}$ is a \emph{$(\sigma,\varepsilon)$-pseudoquadratic
	form} if there exists a $\sigma$-sesquilinear form $g:V\times V\to\KK$ such that $\phi(x)=g(x,x)+\KK_{\sigma,\varepsilon}$ for
all $x\in V$.

The sesquilinear form $g$ is not uniquely determined by $\phi$ and needs not
to be reflexive. In contrast, the form $f_\phi:V\times V\rightarrow \KK$ defined by the clause $f_\phi(x,y) := g(x,y) + {g(y,x)}^\sigma\epsilon$ only depends on $\phi$ and is $(\sigma,\epsilon)$-sesquilinear (hence reflexive). Moreover $\phi(x+y) = \phi(x) + \phi(y) + (f_\phi(x,y) + \KK_{\sigma,\epsilon})$ for any choice of $x, y \in V$. We call $f_\phi$ the {\em sesquilinearization} of $\phi$.

In general, there is no way to recover $\phi$ from its sesquilinearization $f_\phi$. However, suppose that the center of $\KK$ contains an element $\mu$ such that $\mu+\mu^{\sigma}\neq 0$ (as it is always the case when $\mathrm{char}(\KK) \neq 2$: just choose $\mu = 1$). Then
\[\phi(x) ~ = ~ \frac{\mu}{\mu+\mu^{\sigma}}\cdot f_\phi(x,x) + \KK_{\sigma,\epsilon}.\]
A vector $v\in V$ is \emph{singular} for $\phi$ if $\phi(v) = 0$. A subspace $X$ of $V$ is \emph{totally singular} if all of its nonzero vectors are singular while, if $\mathbf{0}$ is the unique singular vector of $X$, then we say that $\phi$ is {\em anisotropic} over $X$. The same terminology is used for points and subspaces of $\PG(V)$. The singular points of $\PG(V)$ together with the totally singular lines of $\PG(V)$ form a polar space $\cP(\phi)$. The singular subspaces of $\cP(\phi)$ are precisely the totally singular subspaces of $\PG(V)$. If the singular points of $\phi$ span $V$ (as it is always the case when $\phi$ admits at least one singular vector not in $\mathrm{Rad}(\phi)$), then the inclusion mapping of $\cP(\phi)$ in $\PG(V)$ is a projective embedding.

The set of singular vectors of $\mathrm{Rad}(f_\phi)$ is a subspace of $V$. It is called the {\em radical} of $\phi$ and denoted by $\mathrm{Rad}(\phi)$. The form $\phi$ is {\em non-degenerate} if $\mathrm{Rad}(\phi) = \{\mathbf{0}\}$, namely $\phi$ is anisotropic over $\mathrm{Rad}(f_\phi)$. The radical of the polar space $\cP(\phi)$ is (the subspace of $\PG(V)$ corresponding to) $\mathrm{Rad}(\phi)$. So, $\cP(\phi)$ is non-degenerate if and only if $\phi$ is non-degenerate.

All vectors that are singular for $\phi$ are isotropic for $f_\phi$ and the span $\langle v, w\rangle$ of two singular vectors $v, w\in V$ is totally singular if and only if $v\perp_{f_\phi} w$. It follows that $\cP(\phi)$ is a subspace of $\cP(f_\phi)$, but possibly different from $\cP(f_\phi)$. In particular, it can  happen that $\cP(\phi)$ is non-degenerate and $\cP(f_\phi)$ is degenerate. However, when $\phi$ can be recovered from $f_\phi$, then $\cP(\phi) = \cP(f_\phi)$. If this is the case, then all we can do with $\phi$ can be done with $f_\phi$ as well.

When focusing on $\cP(\phi)$ regardless of peculiar properties of the form $\phi$, we can always assume that $\phi$ is $(\sigma,1)$-pseudoquadratic.
A $(\sigma,1)$-pseudoquadratic form is called \emph{quadratic} or
\emph{hermitian} according to whether $\sigma=\mathrm{id}_{\KK}$ or
$\sigma\neq\mathrm{id}_{\KK}$.
\medskip

Let now $\varepsilon:{\cP}\rightarrow \PG(V)$ be the universal embedding of $\cP$, where $V = V(N,\KK)$  and $\KK$ is  a division ring.
By Tits~\cite[Chapter 8]{T}, $\varepsilon(\cP)=\cP(f)$ for a non-degenerate sesquilinear form $f$ or $\varepsilon(\cP)=\cP(\phi)$ for a non-degenerate pseudoquadratic form $\phi$.

By Tits~\cite[Chapter 8]{T}, since $\varepsilon$ is universal, non-degenerate and trace-valued, $f$ cannot be alternating when $\ch(\KK) = 2$.
If either $\ch(\KK)\neq 2$ or $\ch(\KK)=2$ but $\sigma$ acts non-trivially
on the center of $\KK$ (more generally, $\phi$ is uniquely determined by
$f_{\phi}$), then $\varepsilon$ is the unique embedding of $\cP$.

If $\ch(\KK) = 2$ and $\eps(\cP)=\cP(\phi)$, then $R := \mathrm{Rad}(f_\phi)$ may be non-trivial. Suppose $R \neq \{\mathbf{0}\}$. However no nonzero vector of $R$ is singular for $\phi$, since $\phi$ is non-degenerate. Hence $R$ contains no point of $\cP(\phi)$. Moreover, every projective line of $\PG(V)$ containing at least two $\phi$-singular points misses $R$. Therefore, for every subspace $X$ of $R$, the projection $\pi_X:\PG(V)\rightarrow \PG(V/X)$ induces an injective mapping from $\cP(\phi)$. Accordingly, the composite $\varepsilon_X = \pi_X\circ\eps$ is an embedding of $\cP$. All embeddings of $\cP$ arise in this way, by factorizing $V$ over subspaces of $R$. The embedding $\varepsilon_R$ obtained by factorizing $V$ over $R$ is the minimal one.

All embeddings $\varepsilon_X$ (included the ones above where $\KK_{\sigma,1}=\{0\}$) can be described by means of {\em generalized pseudoquadratic forms} (see~\cite{P}), except the minimal one when $\phi(R) = \KK/\KK_{\sigma,1}$. If $\phi(R) = \KK/\KK_{\sigma,1}$, then $\sigma = \mathrm{id}_\KK$, the form $\phi$ is quadratic and it induces the null form on $V' := V/R$ and $\varepsilon_R(\cP) = \cP(f_\phi)$ for a non-degenerate alternating form $f:V'\times V'\rightarrow \KK$ (hence $\dim(V') = 2n$). Accordingly, if $\ch(\KK) = 2$ and $\cP$ admits an embedding $\eps':\cP\rightarrow\PG(V')$ such that $\eps'(\cP)$ is a symplectic variety of $\PG(V')$, then the universal embedding $\eps:\cP\rightarrow\PG(V)$ embeds $\cP$ as a quadric $\cP(\phi)$ of $\PG(V)$ and $V' = V/R$. Moreover, if
$\cP$ has finite rank $n$, then $\dim(\eps) = 2n+\fd$ ($= \fd$ when $\fd$ is infinite), where $\fd$ is the degree of $\KK$ over its subfield $\KK^2 = {\{t^2\}}_{t\in \KK}$ (see~\cite{P}). In particular, if $\KK$ is perfect, then
$\fd=1$.

\subsection{Subspaces of a polar space}
Suppose that $\cP$ is an embeddable non-degenerate polar space of finite rank $n\geq 2$ and let  $\varepsilon:{\cP}\rightarrow \PG(V)$ be an embedding of $\cP$.
For any $Z\leq V$ we shall denote by $[Z]$ the projective subspace
of $\PG(V)$ with underlying vector space $Z$.
If $\cS$ is a subspace of $\cP$, then we say that $\cS$ \emph{arises} from $\eps$ if $\cS=\varepsilon^{-1}([X])$, where $[X]$ is a (projective) subspace of $\PG(V)$.
In~\cite{ILP21a} we proved the following (see also~\cite[Remark 2.1]{P22}
for case (E\ref{E2})):
\begin{theorem}\label{thm subspaces}
	All subspaces of non-degenerate rank at least $2$ of an embeddable polar space
	of finite rank $n\geq 2$ arise from at least one of its embedding
	(hence
	from its universal embedding, if the polar space is neither a grid as in
	(E\ref{E1}) nor a bi-embeddable quaternion quadrangle as in (E\ref{E2})).
\end{theorem}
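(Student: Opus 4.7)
Plan: I would fix a projective embedding $\varepsilon\colon\cP\to\PG(V)$ of $\cP$---the absolutely universal one when it exists, otherwise one of the relatively universal embeddings in the exceptional cases (E\ref{E1}) and (E\ref{E2}). Given a non-degenerate subspace $\cS$ of $\cP$ of rank $m\geq 2$, set $[X]:=\langle\varepsilon(\cS)\rangle$ and $\cS':=\varepsilon^{-1}([X])$. By construction $\cS'$ is a subspace of $\cP$ that arises from $\varepsilon$ and contains $\cS$, so the theorem reduces to the equality $\cS=\cS'$.

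Observing that the restriction of $\varepsilon$ to $\cS'$ is a projective embedding of $\cS'$ into $\PG(X)$ whose image still spans $[X]$, the pair $(\cS,\cS')$ consists of two non-degenerate polar spaces, with $\cS$ a subspace of $\cS'$ in the polar sense, both embedded as full spanning subgeometries of the same $\PG(X)$. The crux of the argument is to show that $\cS$ and $\cS'$ have the same rank; once this is achieved, $\cS=\cS'$ follows from the standard fact that a non-degenerate polar space of finite rank admits no proper subspace of the same rank.

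To establish the rank equality I would proceed by induction on $m$. In the base case $m=2$, $\cS$ is a generalized quadrangle and I would rule out an extra point $p\in\cS'\setminus\cS$ by studying the trace $p^\perp\cap\cS$ inside $\cP$: either this trace is large enough to force $p$ to lie on a line of $\cP$ meeting $\cS$ in two distinct points (hence $p\in\cS$ by the defining property of a subspace), or it has the wrong size and violates the one-or-all axiom of Buekenhout--Shult inside $\cS'$. For the inductive step, fix a point $x\in\cS$ and pass to the residue polar spaces of $\cS$ and $\cS'$ at $x$, which have rank $m-1\geq 2$ and live in a common quotient projective space induced by $\varepsilon$; applying the induction hypothesis to the residue of $\cS$ as a subspace of the residue of $\cS'$ yields their coincidence, and a direct lifting recovers $\cS=\cS'$.

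The main obstacle I expect is the base case: excluding extra $\varepsilon$-preimages in $[X]$ for a rank-$2$ subspace is delicate because candidate extra points may sit on the hyperbolic lines of $\cS$, which can contain more than two points as soon as $\cP$ is not orthogonal; distinguishing genuinely new singular points from such hyperbolic-line ghosts is where the rank-$2$ hypothesis on $\cS$ must be used in an essential way. The exceptional cases (E\ref{E1}) and (E\ref{E2}) demand a separate verification, since in the absence of an absolutely universal embedding a subspace arising from one embedding of $\cP$ need not arise from another, so the argument must be carried out once for each available relatively universal embedding.
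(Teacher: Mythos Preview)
The paper does not prove this theorem here; it is quoted from \cite{ILP21a} (with a pointer to \cite{P22} for case (E\ref{E2})), and the only argument given in the present paper is the remark that in the exceptional cases (E\ref{E1}) and (E\ref{E2}) there are no proper subspaces of non-degenerate rank $2$, so the statement is vacuous there. Hence there is no ``paper's own proof'' to compare your plan against beyond that observation, and your concern about treating (E\ref{E1}) and (E\ref{E2}) separately is already moot.

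That said, your plan contains a genuine gap. The ``standard fact'' you invoke, namely that a non-degenerate polar space of finite rank admits no proper subspace of the same rank, is false. The entire paper is built around chains of \emph{nice} subspaces, each of which is a proper non-degenerate subspace of $\cP$ containing a frame and hence of the same rank $n$ as $\cP$; the span of a frame inside an elliptic quadric is already a counterexample. So showing $\rank(\cS)=\rank(\cS')$ does not by itself force $\cS=\cS'$.

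You might hope to rescue the reduction by adding the hypothesis that $\cS$ and $\cS'$ both span the same projective space $[X]$ under $\varepsilon$. But that strengthened statement is no longer a ``standard fact'': it is essentially equivalent to what you are trying to prove, and it can fail for non-universal embeddings (e.g.\ in characteristic $2$ a hyperbolic quadric $Q\subset\PG(3,\KK)$ is a proper rank-$2$ subspace of the symplectic space $W(3,\KK)$, and both span $\PG(3,\KK)$). The reason the theorem is true is precisely that one works with a \emph{relatively universal} embedding, and exploiting that universality is where the actual content lies; your outline never uses it. The inductive scheme via residues is a reasonable scaffold, but the base case and the final identification both rest on the false lemma, so as written the argument does not close.
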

When $\cP$ is a grid or a bi-embeddable quadrangle (mentioned at the beginning of Section~\ref{sesquilinear and pseudoquadratic form})
there
are no proper subspaces of non-degenerate rank $2$, so the statement of the
theorem is empty, hence trivially true, in those cases.

In Section 1, we already defined a frame of $\cP$ as a pair $F=\{A,B\}$, where $A$ and $B$ are mutually disjoint sets of points of $\cP$, both of size $n$, such that $A\subseteq A^\perp$, $B\subseteq B^\perp$ and
for any $a\in A$ there exists a unique $b\in B$ such that $a\not\perp b$,
and conversely for any $b\in B$ there exists a unique $a\in A$ with
$b\not\perp a$.
We say that a subspace of $\cP$ is \emph{nice} if it contains a frame of
$\cP$.
In particular, any subspace of $\cP$ containing a frame
is non-degenerate and
has the same rank as $\cP$.
The following property is immediate yet useful.
\begin{remark}
	\label{nice hyperplane}
	No frame of $\cP$ is contained in the perp of a point; consequently,
	no nice subspace is contained in a singular hyperplane of $\cP$.
\end{remark}

Let $N({\cP})$ be the poset of the nice subspaces of $\cP$
ordered by inclusion.
Following
the notation of Section 1, we shall denote by $\fN(\cP)$
the family of all well-ordered chains of elements of $N(\cP)$ again
ordered by inclusion.
Clearly, $\cP$ is the greatest element of $N({\cal P})$ and the minimal elements of ${N}({\cal P})$ are exactly the subspaces spanned
by the frames of $\cP$.
The poset $N(\cP)$ always contains finite chains and, trivially, every finite chain is well ordered; so $\fN(\cP)\neq\emptyset$.
This being said, in general, not all chains of  ${N}(\cP)$ are well ordered. However, when ${N}(\cP)$ has finite length, namely all of its chains are finite,
then all chains of ${N}(\cP)$ are well ordered.

\begin{lemma}
	\label{ln}
	Let $\cP$ be a non-degenerate embeddable polar space of
	rank at least $2$  with relatively universal embedding $\varepsilon$ and let $\cX$
	be a nice subspace of $\cP$. Put $[X]=\langle\varepsilon(\cX)\rangle$.
	\begin{enumerate}
		\item
		      If $\cX$ is not generated by a frame,
		      then there
		      is $Y\subset X$ such that $\varepsilon^{-1}(Y)$ is nice, $\dim(X/Y)=1$ and
		      $\langle\varepsilon(\varepsilon^{-1}([Y]))\rangle=[Y]$.
		\item
		      If $\cX\subset\cP$ is a proper nice subspace of $\cP$,
		      then there is $Z$ such that $X\subset Z$,
                      $\dim(Z/X)=1$ and $\langle\varepsilon(\varepsilon^{-1}([Z]))\rangle=[Z]$.
	\end{enumerate}
\end{lemma}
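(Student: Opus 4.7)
My plan is to handle the two parts separately, using Theorem~\ref{thm subspaces} in both cases to secure the identification $\cS=\varepsilon^{-1}(\langle\varepsilon(\cS)\rangle)$ for every nice subspace $\cS$ of $\cP$; once this is available, the arguments reduce to elementary linear algebra in $V$.

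For part (2) I would construct $Z$ by adjoining a single point. Since $\cX\subsetneq\cP$ is nice, Theorem~\ref{thm subspaces} forces $\cX=\varepsilon^{-1}([X])$, hence $[X]\subsetneq V$, so there is $q\in\cP\setminus\cX$ with $\varepsilon(q)\notin[X]$. Setting $Z:=X+\langle\varepsilon(q)\rangle$ yields $\dim(Z/X)=1$ by construction, and because $\varepsilon^{-1}([Z])$ contains both $\cX$ and $q$, its image already contains a spanning set of $[Z]$, so $\langle\varepsilon(\varepsilon^{-1}([Z]))\rangle=[Z]$.

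For part (1), the idea is to trim one basis vector from a well-chosen basis of $[X]$. Fix a frame $F\subseteq\cX$ and set $[H]:=\langle\varepsilon(F)\rangle$, a $2n$-dimensional subspace of $[X]$. The preliminary step is $[H]\subsetneq[X]$: applying Theorem~\ref{thm subspaces} to $\langle F\rangle$ gives $\langle F\rangle=\varepsilon^{-1}([H])$, so equality $[H]=[X]$ would force $\cX\subseteq\langle F\rangle$, contradicting the hypothesis that $\cX$ is not generated by a frame. Since $\varepsilon(\cX)$ spans $[X]$, I then extract a basis of $[X]$ of the form $\varepsilon(F)\cup\{\varepsilon(p_1),\ldots,\varepsilon(p_k)\}$ with each $p_i\in\cX$ and $k\geq 1$. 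Taking $[Y]$ to be the span of $\varepsilon(F)$ together with $\varepsilon(p_1),\ldots,\varepsilon(p_{k-1})$ gives a hyperplane of $[X]$ containing $[H]$, and $\varepsilon^{-1}([Y])$ contains $F$ hence is nice; the explicit basis sitting inside $\varepsilon(\varepsilon^{-1}([Y]))$ exhibits $\langle\varepsilon(\varepsilon^{-1}([Y]))\rangle=[Y]$.

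The main technical obstacle is the identification of a nice subspace with the preimage of its linear span, which is exactly what Theorem~\ref{thm subspaces} supplies and is the only place where the hypothesis that $\varepsilon$ is relatively universal is used. A small additional point is that in the exceptional cases (E\ref{E1}) and (E\ref{E2}) the only nice subspace of $\cP$ is $\cP$ itself, which is generated by a frame, so both hypotheses are vacuous and there is nothing to prove.
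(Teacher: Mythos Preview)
Your proof is correct and follows essentially the same route as the paper: pick a frame inside $\cX$, extend $\varepsilon(F)$ to a basis of $X$ by images of points of $\cX$, and drop one basis vector for part~(1); adjoin the image of a point outside $\cX$ for part~(2). The only cosmetic difference is that you write the extension as $\{\varepsilon(p_1),\ldots,\varepsilon(p_k)\}$ with finite $k$, whereas the paper (which allows infinite anisotropic gap) indexes by an ordinal; your argument goes through unchanged once you allow an arbitrary indexing set and simply remove one element.
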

\begin{proof}
	Let $(p_1,p_1',\dots,p_n,p_n')$ be a frame $\cF$ of $\cP$ contained in $\cX$. Then $X$ admits a basis
$B:=(\varepsilon(p_1),\varepsilon(p_1'),\dots,\varepsilon(p_n),\varepsilon(p_n'),\varepsilon(e_1),\dots,\varepsilon(e_{\lambda-1}),
		\varepsilon(e_{\lambda}))$.
	\begin{enumerate}
		\item
		      Put $Y:=\langle
			      \varepsilon(p_1),\varepsilon(p_1'),\dots,\varepsilon(p_n),\varepsilon(p_n'),\varepsilon(e_1),\dots,\varepsilon(e_{\lambda-1})\rangle$.
		      By construction $\dim(X/Y)=1$ and $\cY:=\varepsilon^{-1}(Y)$ is nice, as it contains the frame $\cF$.
		      Clearly, $\cY\neq\cX$ (as $e_{\lambda}\in\cX\setminus\cY$), so $\cY$ is a proper hyperplane of $\cX$ and, by construction,
		      $\langle\varepsilon(\cY)\rangle=[Y]$.
		\item
		      Suppose now that $\cX$ is a proper nice subspace of $\cP$ and $V$ is  the space hosting $\eps(\cP)$. Since  $\cX$ arises from the embedding $\eps$,
		      we have $X\neq V$; so there is at least one point $z\in\cP\setminus\cX$ such that $\varepsilon(z)\not\in X$. Put $Z=\langle X,\varepsilon(z)\rangle$.
	\end{enumerate}
	The thesis follows in both cases.
\end{proof}

The following lemma will be used in Section~\ref{parabolic gap}.

\begin{lemma}
	\label{parabolic defect}
	Let $\cP$ be a non-degenerate embeddable polar space of rank at least $2$ and let $\cX$
	be a nice subspace of $\cP$. Then there is a maximal chain
	of nice subspaces of $\cP$ having $\cX$ as an element.
\end{lemma}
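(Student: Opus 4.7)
The plan is to apply Zorn's lemma directly. I consider the collection $\mathfrak{C}(\cX)$ whose elements are the chains in the poset $N(\cP)$ that contain $\cX$ as a member, partially ordered by set-theoretic inclusion. The singleton chain $\{\cX\}$ certifies that $\mathfrak{C}(\cX)$ is non-empty.

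Next I verify the upper-bound hypothesis of Zorn. Given any totally ordered subfamily $\{C_\lambda\}_{\lambda\in\Lambda}\subseteq\mathfrak{C}(\cX)$, the set $C^\ast:=\bigcup_{\lambda\in\Lambda}C_\lambda$ is an upper bound in $\mathfrak{C}(\cX)$: it contains $\cX$ since each $C_\lambda$ does, and it is itself a chain in $N(\cP)$ because any two of its elements belong to a common $C_\lambda$ (the index family being totally ordered) and are therefore comparable as nice subspaces. Zorn's lemma then yields a maximal element $\widehat{C}\in\mathfrak{C}(\cX)$; maximality in $\mathfrak{C}(\cX)$ evidently coincides with maximality as a chain of nice subspaces of $\cP$ that contains $\cX$.

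No real obstacle appears: the argument is purely order-theoretic and, in particular, Lemma~\ref{ln} is not needed for the mere existence of such a maximal chain. Lemma~\ref{ln} becomes crucial in Section~\ref{parabolic gap} for analysing the \emph{structure} of these maximal chains, namely that consecutive elements may be arranged to differ by a one-dimensional step in the embedding, that the minimum element can be taken to be generated by a frame, and that the maximum element is $\cP$ itself; but for the present statement these refinements are not required.
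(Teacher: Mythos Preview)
Your Zorn argument is correct and proves the lemma exactly as stated: the poset of chains in $N(\cP)$ containing $\cX$, ordered by inclusion, satisfies the chain condition, so a maximal such chain exists; and a chain containing $\cX$ that is maximal among chains containing $\cX$ is automatically maximal among all chains in $N(\cP)$.

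The paper takes a different, constructive route. It passes to the universal embedding $\varepsilon:\cP\to\PG(V)$, builds a basis of $V$ in three blocks $B_0\cup B_1\cup B_2$ (frame vectors, then vectors completing to $\langle\varepsilon(\cX)\rangle$, then vectors completing to $V$), well-orders this basis, and takes the corresponding flag of spans; Lemma~\ref{ln} guarantees that consecutive terms differ by one dimension, so the resulting chain of preimages is maximal. What this buys is precisely the extra structure you flag at the end: the chain produced is \emph{well ordered}, starts at a subspace generated by a frame, ends at $\cP$, and has length expressible as $|\delta|+|\psi|+1$ in terms of embedding dimensions. The sentence introducing the lemma and its use in Section~\ref{parabolic gap} both speak of a ``maximal well ordered chain'', and Theorem~\ref{plain} is then invoked for such chains; your abstract argument does not yield well-ordering, so while it settles the lemma as literally worded, it would not suffice for that application without further work.
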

\begin{proof}
	We may assume that $\cP$ is neither a grid nor a bi-embeddable quaternion
	quadrangle, otherwise $\cP$ is its unique nice subspace and there is nothing
	to prove.
	By Theorem~\ref{thm subspaces}, all nice subspaces of $\cP$
	arise from the universal embedding of $\cP$. Let $\cF$ be a frame
	of $\cX$ and put $[F]:=\langle\varepsilon(\cF)\rangle$.
	Let also $[X]:=\langle\varepsilon(\cX)\rangle$.
	Let $B_0=(\mathbf{f}_1,\dots,\mathbf{f}_{2n})$ be a basis of $F$,
	where $[\mathbf{f}_i]=\varepsilon(f_i)$ with $f_i\in\cF$.
	As $[X]$ is spanned by $\varepsilon(\cX)$, it is possible to
	complete $B_0$ to a basis of $X$ by adding vectors from
	$B_1:={(\mathbf{x}_i)}_{i<\delta}$,
	where $\delta$ is a suitable ordinal number and
	$[\mathbf{x}_i]=\varepsilon(x_i)$ with $x_i\in\cX$.
	Finally, the basis $B_0\cup B_1$ can be completed to a basis of
	$V$ by adding further vectors from
	$B_2={(\mathbf{v}_j)}_{j<\psi}$
	with $[\mathbf{v}_i]=\varepsilon(v_i)$ with $v_i\in\cP$ and $\psi$ a suitable ordinal number.
	The sets $B_0,B_1$ and $B_2$ admit a well order by
	the axiom of choice. Consequently, the set
	$B=B_0\cup B_1\cup B_2$ is also well ordered if we put
	each element of $B_0$ before each element of
	$B_1$ and each element of $B_1$ before each element of $B_2$.
	For any $\gamma<\delta$ and $\xi<\psi$,  let
	\[ \mathfrak V: F\subset L_1\subset L_2\dots \subset X\subset
		U_1\subset U_2\subset\dots\subset V \]
	be the chain of subspaces of $V$ given by
	\[ L_{\gamma}:=\langle F\cup {\{\mathbf{x}_{i}\}}_{i<\gamma}\rangle
          \qquad\text{and}
		\qquad
		U_{\xi}:=\langle X\cup{\{\mathbf{v}_{j}\}}_{j<\xi}\rangle.
	\]
	We have
	$L_0=F$, $L_{\delta}=X=U_0$ and $U_{\psi}=V$.
	
	By Lemma~\ref{ln},
	we can always take $\mathfrak V$ to be  a maximal chain, since
	for any two successive spaces, say $V_{i+1}$ and $V_i$, we
	have $\dim(V_{i+1}/V_i)=1$.
	Also, the chain $\mathfrak V$ contains $X$.
	Put now $\cL_i:=\varepsilon^{-1}([L_i])$
	and  $\cU_j:=\varepsilon^{-1}([U_j])$.
	We then obtain a well-ordered chain of subspaces:
	\[ \cF\subset\cL_1\subset\dots\subset\cL_{\delta}=\cX=\cU_0\subset\dots
		\subset\cU_{\psi}. \]
	Observe that any two terms of this chain are (by construction) different
	and each of them is a hyperplane in the term which follows
	(since all subspaces arise from $\varepsilon$
        by Theorem~\ref{thm subspaces}).
	So, this chain is maximal
	and it contains $\cX$. In particular, its length is $
        |\delta|+|\psi|$ according to the definition of length of a chain
        given in Section 1.
\end{proof}

\section{Hyperbolic lines}\label{hyperb lines}
Suppose that $\cP$ is a non-degenerate embeddable polar space of rank $n\geq2$.
For any pair of non-collinear points $a,b\in\cP$, the set ${\{a,b\}}^{\perp\perp}$ is the \emph{hyperbolic line} determined by $a$ and $b$. Clearly, $a,b\in{\{a,b\}}^{\perp\perp}$
for any $a,b\in\cP$ and any two points of ${\{a,b\}}^{\perp\perp}$ are non-collinear.  In particular a hyperbolic line of $\cP$ always
contains at least $2$ points.

Let now $\varepsilon:\cP\to\PG(V)$ be the universal embedding of $\cP$
if $\cP$ is not of type (E\ref{E1}) or
(E\ref{E2}) or one of its relatively universal embeddings in the latter cases.
If $\varepsilon(\cP)$ is described by a hermitian or quadratic
form $ \phi\colon V\rightarrow \KK$, so that $\varepsilon(\cP)=:\cP(\phi)$,
then let $f:V\times V\rightarrow \KK$ be the sesquilinearization of $\phi$; when $\varepsilon (\cP)$ is described by an alternating
form, let $f$ be that form;
in either case $\perp_{f}$ is  the polarity
induced  by $f$ in $\PG(V)$.

For $a,b\in\cP$ put $[\mathbf{a}]=\varepsilon(a)$ and
$[\mathbf{b}]=\varepsilon(b)$ with $\mathbf{a},\mathbf{b}\in V$.
We have $a\perp b$ if and only if $[\mathbf{a}]\perp_f[\mathbf{b}]$ if and only if $f(\mathbf{a}, \mathbf{b})=0$. 
More explicitly, any two given points
$[\mathbf{a}],[\mathbf{b}]\in\cP(\phi)$ with
$[\mathbf{a}]\not\perp[\mathbf{b}]$ determine the hyperbolic line
of $\cP(\phi)$ consisting of
${\{[\mathbf{a}],[\mathbf{b}]\}}^{\perp_f\perp_f}\cap \cP(\phi)$.
So, any hyperbolic line ${\{a,b\}}^{\perp\perp}$ of $\cP$ is in correspondence with the  unique hyperbolic line ${\{\varepsilon(a),\varepsilon(b)\}}^{\perp_f\perp_f}\cap \cP(\phi)$ of $\cP(\phi)$ through $\varepsilon$:
\begin{equation}\label{hyperbolic lines-1}
{\{a,b\}}^{\perp\perp}=\varepsilon^{-1}({\{\varepsilon(a),\varepsilon(b)\}}^{\perp_f\perp_f}\cap\varepsilon(\cP)).
\end{equation}
With the notation introduced before, we have the following.
\begin{lemma}\label{lemma1}
	The embedding $\varepsilon$ induces a bijection between the set of
	hyperbolic lines of $\cP$ and those of $\cP(\phi)$.
\end{lemma}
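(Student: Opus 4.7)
The plan is to show that the map $\Phi$ sending each hyperbolic line $H=\{a,b\}^{\perp\perp}$ of $\cP$ to $\varepsilon(H)$ is a well-defined bijection onto the set of hyperbolic lines of $\cP(\phi)$, with inverse given by $L\mapsto \varepsilon^{-1}(L)$. The whole content of the lemma is already essentially packaged in equation \eqref{hyperbolic lines-1}; what remains is the bookkeeping.

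First I would argue well-definedness. Given a hyperbolic line $H=\{a,b\}^{\perp\perp}$ of $\cP$, applying $\varepsilon$ to both sides of \eqref{hyperbolic lines-1} and using that $\varepsilon$ is a point-injection yields
\[ \varepsilon(H) \;=\; \{\varepsilon(a),\varepsilon(b)\}^{\perp_f\perp_f}\cap \varepsilon(\cP). \]
Since $\varepsilon$ preserves and reflects collinearity (because lines of $\cP$ are mapped bijectively to lines of $\PG(V)$ lying inside $\varepsilon(\cP)$), the condition $a\not\perp b$ in $\cP$ is equivalent to $\varepsilon(a)\not\perp_f \varepsilon(b)$ in $\PG(V)$; hence $\varepsilon(H)$ is indeed a hyperbolic line of $\cP(\phi)$, and $\Phi$ does not depend on the representatives $a,b$ chosen to describe $H$ (since any other pair of non-collinear points of $H$ generates the same double-perp, both upstairs and downstairs).

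Next I would verify injectivity and surjectivity. Injectivity is immediate because $\varepsilon$ is injective on points: $\varepsilon(H_1)=\varepsilon(H_2)$ implies $H_1=\varepsilon^{-1}(\varepsilon(H_1))=\varepsilon^{-1}(\varepsilon(H_2))=H_2$. For surjectivity, take any hyperbolic line $L$ of $\cP(\phi)$; by definition $L=\{[\mathbf{u}],[\mathbf{v}]\}^{\perp_f\perp_f}\cap \cP(\phi)$ for some $[\mathbf{u}],[\mathbf{v}]\in\cP(\phi)$ with $[\mathbf{u}]\not\perp_f[\mathbf{v}]$. Because $\varepsilon$ is a bijection from the point set of $\cP$ onto that of $\cP(\phi)$, we may write $[\mathbf{u}]=\varepsilon(a)$, $[\mathbf{v}]=\varepsilon(b)$ with $a,b\in\cP$, and then $a\not\perp b$ in $\cP$. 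Invoking \eqref{hyperbolic lines-1} gives $\{a,b\}^{\perp\perp}=\varepsilon^{-1}(L)$, and applying $\varepsilon$ we obtain $\Phi(\{a,b\}^{\perp\perp})=L$.

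There is no serious obstacle: the argument is essentially a direct consequence of \eqref{hyperbolic lines-1} together with the fact that a projective embedding preserves and reflects the collinearity relation. The only point deserving explicit mention is that the formula defining $\Phi$ really depends on the hyperbolic line itself and not on the chosen pair of non-collinear points, which follows because any two non-collinear points of a hyperbolic line produce the same double-perp.
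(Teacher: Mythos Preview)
Your proof is correct and follows the same approach as the paper, which treats the lemma as an immediate consequence of equation~\eqref{hyperbolic lines-1} and the preceding discussion (the paper simply writes ``Hence'' before stating the lemma, without a separate proof). You have spelled out carefully the bookkeeping that the paper leaves implicit.
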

If $\eps(\cP)$ is a symplectic polar space, then, necessarily,
$\ch(\KK)\neq 2$ and any hyperbolic line of $\eps(\cP)$ is a line of $\PG(V)$.
If $\eps(\cP)=\cP(\phi)$ with $\phi$ hermitian with a non-degenerate
sesquilinearization $f$,
then any of its hyperbolic lines
is a subline; explicitly if $a$ and $b$ are non-collinear points of $\cP$
and the representative vectors $\mathbf{a}$ and $\mathbf{b}$ of
$\varepsilon(a)$ and $\varepsilon(b)$ are chosen in such a way that
$f(\mathbf{a},\mathbf{b})=1$, then ${\{\varepsilon(a),\varepsilon(b)\}}^{\perp_f\perp_f}$ is the set
$\{[\mathbf{b}]\}\cup\{ [\mathbf{a}+\mathbf{b}t] \colon t^{\sigma}+t=0\}$,
where $\sigma$ is the anti-automorphism of $\KK$ associated to $\phi$.

When $\varepsilon(\cP)=\cP(\phi)$ for a quadratic form
$\phi$ with a non-degenerate bilinearization, then every hyperbolic line of
$\varepsilon(\cP)$ consists of just $2$ points.

When $\varepsilon(\cP)=\cP(\phi)$ is hermitian or quadratic with a sesquilinearization $f$ and $R=\mathrm{Rad}(f)\neq\{\mathbf{0}\}$, then
every hyperbolic line ${\{\varepsilon(a),\varepsilon(b)\}}^{\perp_f\perp_f}$
of $\varepsilon(\cP)$ spans a $(\dim(R)+2)$--dimensional subspace
$\langle\varepsilon(a),\varepsilon(b),R\rangle$ of $V$.

\begin{lemma}
	\label{lemb}
	With $\cP$ and $\varepsilon$ as above,
	if all the hyperbolic lines of $\cP$ contain exactly two points, then
	$\varepsilon (\cP)$
	is a non-degenerate
	orthogonal polar space of either hyperbolic or elliptic type.
\end{lemma}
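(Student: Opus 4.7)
The plan is to invoke Lemma~\ref{lemma1} to transfer the hypothesis on hyperbolic lines from $\cP$ to $\varepsilon(\cP)$, and then to eliminate each possibility for $\varepsilon(\cP)$ that would produce a hyperbolic line with more than two points, leaving only the case of a quadratic form with non-degenerate bilinearization.

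By the classification recalled in Section~\ref{sesquilinear and pseudoquadratic form}, $\varepsilon(\cP)$ is either $\cP(f)$ with $f$ non-degenerate alternating, or $\cP(\phi)$ with $\phi$ non-degenerate pseudoquadratic. The alternating case is ruled out at once: by the cited result of Tits, a universal alternating embedding forces $\ch(\KK)\neq 2$, hence $|\KK|\geq 3$, and every hyperbolic line of $\cP(f)$ is a full projective line of $\PG(V)$ carrying $|\KK|+1\geq 3$ points, contradicting the hypothesis.

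For the pseudoquadratic case $\varepsilon(\cP)=\cP(\phi)$, I would split on whether $\sigma$ is the identity. If $\sigma\neq\mathrm{id}_{\KK}$ (hermitian) and $f_{\phi}$ is non-degenerate, the parameterization $\{[\mathbf{b}]\}\cup\{[\mathbf{a}+t\mathbf{b}]:t+t^{\sigma}=0\}$ recalled just before the statement reduces the line-count to the size of the additive subgroup $\{t\in\KK:t+t^{\sigma}=0\}$; since $\sigma$ has order $2$ (a consequence of the $(\sigma,1)$-condition), this subgroup is non-trivial in every characteristic: it contains the fixed subfield when $\ch(\KK)=2$ and the non-zero element $s-s^{\sigma}$ whenever $\ch(\KK)\neq 2$ and $s^{\sigma}\neq s$. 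Hence the hyperbolic line has at least $3$ points. If $\sigma=\mathrm{id}_{\KK}$ (quadratic) and $R:=\mathrm{Rad}(f_{\phi})\neq\{\mathbf{0}\}$, then for non-collinear singular vectors $\mathbf{a},\mathbf{b}$ scaled so that $f_{\phi}(\mathbf{a},\mathbf{b})=1$ the double perp equals $\langle\mathbf{a},\mathbf{b}\rangle+R$; evaluating $\phi$ on $\alpha\mathbf{a}+\beta\mathbf{b}+r$ produces $\alpha\beta+\phi(r)$, and the anisotropy of $\phi$ on $R$ (forced by non-degeneracy of $\phi$) makes the case $\alpha=0$ contribute only $[\mathbf{b}]$, while each $r\in R$ produces a distinct point $[\mathbf{a}-\phi(r)\mathbf{b}+r]$, for a total of $|R|+1\geq 3$ points. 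The remaining hermitian subcase in which $f_{\phi}$ is degenerate is analogous, since by the observation in the excerpt the hyperbolic line again spans the larger subspace $\langle\mathbf{a},\mathbf{b},R\rangle$ and a parallel count yields too many singular points.

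This exhausts the possibilities: the only configuration compatible with two-point hyperbolic lines is $\varepsilon(\cP)=\cP(\phi)$ with $\phi$ quadratic and $\mathrm{Rad}(f_{\phi})=\{\mathbf{0}\}$, which is precisely $(\fe,0)$-orthogonal, i.e.\ of hyperbolic or elliptic type. The only non-trivial obstacle I anticipate is verifying uniformly that $\{t\in\KK:t+t^{\sigma}=0\}$ is non-trivial in the hermitian subcase over an arbitrary (possibly non-commutative) division ring $\KK$; once that point is granted the remaining arithmetic is routine.
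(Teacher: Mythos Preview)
Your argument is correct and covers all cases, though the hermitian degenerate subcase is only sketched; the computation there goes through essentially as in your quadratic degenerate count, since $\phi(\mathbf{a}+\mathbf{b}t+r)=\phi(r)+(t+\KK_{\sigma,1})$ and each $r\in R$ contributes at least one singular point distinct from $[\mathbf{a}],[\mathbf{b}]$.

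Your route, however, is genuinely different from the paper's. You proceed by elimination over the Tits classification of universal embeddings (alternating, hermitian, quadratic; degenerate or non-degenerate bilinearization), computing the size of hyperbolic lines explicitly in each unwanted case. The paper instead splits only on whether $f$ is degenerate. When $f$ is non-degenerate, it does \emph{not} invoke the classification at all: it observes that the two-point hypothesis forces every projective line meeting $\varepsilon(\cP)$ in at least three points to lie inside $\varepsilon(\cP)$, so $\varepsilon(\cP)$ is a Tallini set, and then cites the Buekenhout--Ferrara Dentice characterization to conclude it is a quadric. When $f$ is degenerate, the paper avoids any arithmetic on $\phi$ and instead invokes Tits~[8.2.7] to say that $X=\langle\mathbf{u},\mathbf{v}\rangle+\mathrm{Rad}(f)$, being of dimension at least $3$ and containing a singular vector outside the radical, must be spanned by its singular points---contradicting the fact that $[X]\cap\cP(\phi)$ has only two.

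What each approach buys: yours is self-contained and elementary, needing nothing beyond the embedding classification and straightforward calculations, but it is case-heavy and requires separate attention to non-commutative~$\KK$. The paper's argument is more uniform (no split on $\sigma$) and conceptually cleaner, at the price of importing two external results (the Tallini-set theorem and Tits's spanning lemma). Both are valid; the paper's has the advantage of treating the hermitian and quadratic cases simultaneously.
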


\begin{proof}
	According to the notation introduced at the beginning of this section, let $a$ and $b$ be two non-collinear points of $\cP$.
	By hypothesis, we have that
	$\{a,b\}^{\perp\perp}=\{a,b\}$.
	By Equation~\eqref{hyperbolic lines-1}, it follows that
	$\{[\mathbf{a}],[\mathbf{b}]\}^{\perp_f\perp_f}\cap\cP(\phi)=
		\{[\mathbf{a}],[\mathbf{b}]\}$. So, by Lemma~\ref{lemma1},
                we have that all hyperbolic lines of $\cP(\phi)$ contain exactly two points.

	\medskip
	{\sc Case A}: $f$ is non-degenerate. Then
	for
	any two distinct non-orthogonal points $[\mathbf{a}],[\mathbf{b}]\in \eps(\cP)$,
	we have that $\{\mathbf{a},\mathbf{b}\}^{\perp_f\perp_f}$ spans the projective line through $[\mathbf{a}]$ and $[\mathbf{b}]$, i.e.\
	$\dim\{\mathbf{a},\mathbf{b}\}^{\perp_f\perp_f}=2$.
	So, a projective line is a $2$-secant for $\eps(\cP)$
	if and only if it is spanned by a hyperbolic line of $\eps(\cP)$
	(equivalently, it is spanned by the image of a hyperbolic line of $\cP$).

        Let $\ell$ be a line of $\PG(V)$ containing at least three distinct points $[\mathbf{u}],[\mathbf{v}],[\mathbf{w}]$ of $\eps(\cP)$
	and suppose by way of contradiction that $\ell$ is not contained in $\eps(\cP)$.
	Then $\mathbf{u},\mathbf{v},\mathbf{w}$ are pairwise non-orthogonal
	vectors and
	the hyperbolic line determined by $\mathbf{u}$ and $\mathbf{v}$
	coincides with the hyperbolic line determined by
	$\mathbf{u}$ and $\mathbf{w}$, i.e.\ $\{[\mathbf{u}],[\mathbf{v}]\}^{\perp_f \perp_f}=\{[\mathbf{u}],[\mathbf{w}]\}^{\perp_f \perp_f}$. This is a contradiction because the hyperbolic line
$\{u,v\}^{\perp\perp}=\varepsilon^{-1}(\{[\mathbf{u}],[\mathbf{v}]\}^{\perp_f \perp_f})$ would contain at least three points, namely $u,v,w$.
	So, $\ell\subseteq \eps(\cP).$
	In other words,
	any projective line intersecting $\eps(\cP)$ in at least three points must be contained in $\eps(\cP)$. This property together with the fact that $\eps(\cP)$
	spans $\PG(V)$ proves that $\eps(\cP)$ fulfills the definition of
	a {\it Tallini set}.   By~\cite[Theorem 3.8]{d02}
        (see also~\cite[Theorem 1.1]{B69}), we have that $\varepsilon(\cP)$ is described by a non-degenerate quadratic form, i.e.\ $\cP$ is an orthogonal polar space.
	Moreover, since we are assuming that $f$ is non-degenerate, $\eps(\cP)$ is an orthogonal polar space of either hyperbolic or elliptic type, according
	to our definitions.
	\medskip
	
	{\sc Case B}: $f$ is degenerate.
	In this case, we have that $[X]:=\{[\mathbf{u}],[\mathbf{v}]\}^{\perp_f\perp_f}=[\langle \mathbf{u},\mathbf{v}\rangle+\mathrm{Rad}(f)]$, where $[\mathbf{u}]\not\perp_f
		[\mathbf{v}]$ and $\dim(\mathrm{Rad}(f))\geq 1$.
	Put $R:=\mathrm{Rad}(f)$.
	Since $R\cap\langle\mathbf{u},\mathbf{v}\rangle=\{\mathbf{0}\}$,
	we have
	$\dim(\{\mathbf{u},\mathbf{v}\}^{\perp_f\perp_f})>2$, i.e.\ $\{[\mathbf{u}],[\mathbf{v}]\}^{\perp_f\perp_f}$
	contains at least a projective plane of $\PG(V)$.
	
	Let
	$\phi_X$ be the form induced by $\phi$ on $X$.
	Since $R\subseteq X$, we have $\langle \mathbf{u},\mathbf{v}\rangle\subset X$.
	Also, $\mathbf{u},\mathbf{v}\not\in R$ (as $\mathbf{u}\not\perp_f\mathbf{v}$)
	and $\phi_X(\mathbf{u})=0=\phi_X(\mathbf{v})$. By~\cite[8.2.7]{T},
        $X$ is generated by its $\phi_X$-singular vectors. On the other
	hand, the singular points of $[X]$ are those of $[X]\cap\cP(\phi)$
	and $|[X]\cap\cP(\phi)|=2$. This is a contradiction.
	This forces $f$ not to be degenerate.

        By Case A, the lemma is proved.
\end{proof}

\begin{lemma}
	\label{l1}
	Let $\cP$ and $\varepsilon:\cP\to\PG(V)$  be as above.
	If $\eps(\cP)$ is  orthogonal of either hyperbolic or elliptic
        type,
	then every hyperbolic line of $\cP$ contains exactly $2$ points.
\end{lemma}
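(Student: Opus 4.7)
The plan is to reduce to a direct computation inside $\PG(V)$, exploiting that in the $(\fe,0)$-orthogonal case the bilinearization $f=f_\phi$ is non-degenerate. By Lemma~\ref{lemma1}, $\varepsilon$ sets up a bijection between the hyperbolic lines of $\cP$ and those of $\varepsilon(\cP)=\cP(\phi)$, so it is enough to show that, for any two non-collinear points $[\mathbf{a}],[\mathbf{b}]\in\cP(\phi)$, the set $\{[\mathbf{a}],[\mathbf{b}]\}^{\perp_f\perp_f}\cap\cP(\phi)$ consists of exactly the two points $[\mathbf{a}]$ and $[\mathbf{b}]$.

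First I would invoke non-degeneracy of $f$ to identify $\{[\mathbf{a}],[\mathbf{b}]\}^{\perp_f\perp_f}$ with the projective line $\langle[\mathbf{a}],[\mathbf{b}]\rangle$ of $\PG(V)$; this is the standard fact that a non-degenerate reflexive form induces a polarity, so double-perp of a $2$-dimensional subspace coincides with that subspace. Hence any point of the hyperbolic line of $\cP(\phi)$ determined by $[\mathbf{a}]$ and $[\mathbf{b}]$ has the form $[\mathbf{a}+t\mathbf{b}]$ for some $t\in\KK$, together with $[\mathbf{b}]$ itself.

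Next I would apply the polarization identity for a quadratic form: since $f$ is the bilinearization of $\phi$, we have $\phi(\mathbf{a}+t\mathbf{b})=\phi(\mathbf{a})+t^2\phi(\mathbf{b})+t\,f(\mathbf{a},\mathbf{b})$. Because $[\mathbf{a}],[\mathbf{b}]\in\cP(\phi)$ are singular and non-collinear, $\phi(\mathbf{a})=\phi(\mathbf{b})=0$ while $f(\mathbf{a},\mathbf{b})\neq 0$, so the expression collapses to $t\,f(\mathbf{a},\mathbf{b})$, which vanishes only for $t=0$. Therefore the only singular points on $\langle[\mathbf{a}],[\mathbf{b}]\rangle$ are $[\mathbf{a}]$ and $[\mathbf{b}]$, proving the claim.

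There is no real obstacle in this proof: the assumption that $\varepsilon(\cP)$ is of hyperbolic or elliptic type is precisely what guarantees $\mathrm{Rad}(f)=\{\mathbf{0}\}$, which collapses the double-perp to a projective line; and the quadratic (rather than merely sesquilinear) nature of $\phi$ is exactly what forces the singularity condition along that line to be a linear equation in $t$ with non-zero coefficient. The only point requiring a small comment is that the argument works uniformly in any characteristic, since the polarization identity $\phi(x+y)=\phi(x)+\phi(y)+f(x,y)$ holds verbatim regardless of $\mathrm{char}(\KK)$.
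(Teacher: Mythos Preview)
Your proof is correct and follows essentially the same approach as the paper: both use non-degeneracy of $f$ to identify $\{[\mathbf{a}],[\mathbf{b}]\}^{\perp_f\perp_f}$ with the projective line $\langle[\mathbf{a}],[\mathbf{b}]\rangle$, and then show this line meets $\cP(\phi)$ in only two points. The only cosmetic difference is that the paper phrases the second step as ``a degree-$2$ hypersurface meets a line in at most two points unless it contains it,'' whereas you carry out the equivalent explicit computation $\phi(\mathbf{a}+t\mathbf{b})=t\,f(\mathbf{a},\mathbf{b})$.
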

\begin{proof}
	According to the notation introduced at the beginning of this section, since $\eps(\cP)$ is
	either hyperbolic or elliptic, we have
	$\mathrm{Rad}(f)=\{\mathbf{0}\}$.
	Take two arbitrary distinct points $a,b$ of $\cP$ which are not collinear in $\cP$ and let $\varepsilon(a)=[\mathbf{a}]$, $\varepsilon(b)=[\mathbf{b}]$ with
	$\mathbf{a},\mathbf{b}\in V$.
	Then
	$\phi(\mathbf{a})=\phi(\mathbf{b})=0$ and $f(\mathbf{a},\mathbf{b})\neq0$.
	Since $f$ is non-degenerate,
	$\dim \{\mathbf{a}, \mathbf{b}\}^{\perp_f\perp_f}=2$ and this space
	contains $\{\mathbf{a},\mathbf{b}\}$.
	Since $\eps(\cP)$ is a hypersurface of degree $2$, either the line $[
				\langle\mathbf{a}, \mathbf{b}\rangle]$ is a $2$-secant to $\cP(\phi)$ or it is contained in $\cP(\phi)$.
	If
	$\{[\mathbf{a}],[\mathbf{b}]\}^{\perp_f\perp_f}\subset\cP(\phi)$, then
	$f(\mathbf{a},\mathbf{b})=0$
	which is a contradiction, for this would imply $a\perp b$ in
	$\cP$.
	Hence $\{[\mathbf{a}],[\mathbf{b}]\}^{\perp_f\perp_f}\cap\cP(\phi)=
		\{[\mathbf{a}],[\mathbf{b}]\}$.
	In particular, $\{a,b\}^{\perp\perp}=\varepsilon^{-1}(\{[\mathbf{a}],[\mathbf{b}]\}^{\perp_f\perp_f}\cap\cP(\phi))=\{a,b\}$
	and all hyperbolic lines of $\cP$ consist of $2$ points.
\end{proof}

\begin{proof}[Proof of Theorem~\ref{thm hyperb lines}]
	The theorem follows directly from Lemma~\ref{lemb} and Lemma~\ref{l1}.
\end{proof}
\begin{proof}[Proof of Corollary~\ref{pr:orth}]
	By Lemma~\ref{lemb}, there exists a quadratic form $\bar{\phi}\colon \langle \varepsilon(\cF)\rangle\rightarrow \KK$ such that $\eps|_{\cF}(\cF)=\cF(\bar{\phi})$, where $\eps$ is  the universal embedding of $\cP$.
	As $\eps|_{\cF}$ is the restriction to $\cF$ of $\eps$, it follows that
	$\eps(\cP)$ must be described by
	a suitable quadratic form $\phi$ whose restriction to
        $\langle \varepsilon(\cF)\rangle$
	is exactly $\bar{\phi}$.
	
	Conversely, suppose that
	$\varepsilon(\cP)=\cP(\phi)$ with $\phi$ a non-degenerate
	quadratic form. Let $F$ be a frame of $\cP$ and $\cF$ be
	the subspace of $\cP$ spanned by $F$. Then
	$\varepsilon(\cF)=[\langle\varepsilon(F)\rangle]\cap\varepsilon(\cP)$
	and the latter is
	a hyperbolic quadric in $[\langle\varepsilon(F)\rangle]$.
	In particular, all hyperbolic lines of $\varepsilon(\cF)$
	consist of exactly $2$ points. By Lemma~\ref{lemma1}, we now have
	that $\cF$ is hyperbolic and so all of its hyperbolic lines consist of
	$2$ points.
\end{proof}

\section{Elliptic gap}\label{sec 3}
Recall from  Definition~\ref{dec} that an elliptic chain of a non-degenerate polar space $\cP$ of finite rank is
a well-ordered chain
\[ \fE: \cF=\cE_0\subset\cE_1\subset\dots\subset\cE_{\delta}\subset\dots,\]
where $\cE_{i}\subseteq\cP$ is a nice subspace of $\cP$ with the property that any of its hyperbolic lines contains exactly $2$ points.

The following lemma deals with arbitrary non-degenerate polar spaces.
\begin{lemma}
	\label{l:hs}
	Let $\cP$ be a non-degenerate polar space and $(\cS_i)_{i\in I}$
	be a chain of non-degenerate
	polar subspaces of $\cP$ ordered with respect to the inclusion relation $\subseteq$.
	Then
	$\cS_I:=\bigcup_{i\in I}\cS_i$ is a non-degenerate polar space
	and $\cS_i$ is a subspace of $\cS_I$ for all $i$.
	For any $a,b\in\cS_I$ with $a\not\perp_I b$ we have
	\[ \{a,b\}^{\perp_I\perp_I}\subseteq
		\bigcup_{i:a,b\in\cS_i}\{a,b\}^{\perp_i\perp_i}, \]
	where $\perp_x$ denotes the collinearity relation in $\cS_x$
	for $x=I$ or $x\in I$.
\end{lemma}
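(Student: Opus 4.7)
The strategy is to leverage the fact that since $(\cS_i)_{i\in I}$ forms a chain, any finite subset of $\cS_I$ is contained in a single $\cS_j$, so that local properties in the $\cS_j$'s transfer to the union. First I would show that $\cS_I$ is itself a subspace of $\cP$. Given $a,b\in\cS_I$ on a line $\ell$ of $\cP$, pick $j\in I$ with $a,b\in\cS_j$ (possible because each point lies in some $\cS_k$ and any two elements of a chain have a common upper bound). Since $\cS_j$ is a subspace of $\cP$, the whole line $\ell$ lies in $\cS_j\subseteq\cS_I$. Consequently $\cS_I$ inherits the natural polar space structure from $\cP$, its lines being exactly those lines of $\cP$ contained in $\cS_I$. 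The same observation shows that each $\cS_i$ is a subspace of $\cS_I$: any line of $\cS_I$ with two points in $\cS_i$ is forced, by $\cS_i$ being a subspace of $\cP$, to lie in $\cS_i$.

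Next I would establish non-degeneracy of $\cS_I$. Suppose, for contradiction, that $p\in\mathrm{Rad}(\cS_I)$ and choose $j\in I$ with $p\in\cS_j$. Since $\cS_j$ is non-degenerate, there exists $q\in\cS_j$ with $q\not\perp_j p$, i.e.\ no line of $\cP$ through $p$ and $q$ is contained in $\cS_j$; but any line of $\cP$ joining two points of the subspace $\cS_j$ is automatically contained in $\cS_j$, so in fact no line of $\cP$ joins $p$ and $q$. Hence $q\not\perp_I p$, contradicting $p\in\mathrm{Rad}(\cS_I)$.

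For the hyperbolic line containment, let $x\in\{a,b\}^{\perp_I\perp_I}$ with $a\not\perp_I b$. Since $\{a,b,x\}$ is finite, there exists $k\in I$ with $a,b,x\in\cS_k$; I claim $x\in\{a,b\}^{\perp_k\perp_k}$, which concludes the proof. Indeed, for any $y\in\{a,b\}^{\perp_k}$ we have $y\in\cS_k\subseteq\cS_I$ together with $y\perp a$ and $y\perp b$ in $\cP$, so $y\in\{a,b\}^{\perp_I}$; by hypothesis on $x$ this gives $x\perp_I y$, meaning there is a line of $\cP$ joining $x$ and $y$, and since both points lie in $\cS_k$ that line lies in $\cS_k$, whence $x\perp_k y$ as required.

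The argument is essentially careful bookkeeping with the chain: the only subtlety worth scrutiny is the repeated translation between the relations $\perp_i$ and $\perp_I$, which all reduce to the single relation ``joined by a line of $\cP$'' once the relevant finitely many points have been collected inside a common $\cS_k$. There is no deep obstacle; the one pitfall to avoid is arguing inside a single $\cS_i$ before first using the chain property to ensure that \emph{all} the points involved in the current step actually lie in $\cS_i$.
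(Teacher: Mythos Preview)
Your proof is correct and follows essentially the same approach as the paper's. The paper's own argument is terser: it takes the claims that $\cS_I$ is a non-degenerate subspace and that each $\cS_i$ is a subspace of $\cS_I$ as essentially immediate from the definition, and focuses on the hyperbolic line inclusion via exactly your bookkeeping (pick $j$ with $a,b,c\in\cS_j$, note $\{a,b\}^{\perp_j}\subseteq\{a,b\}^{\perp_I}$, deduce $c\perp_j x$ from $c\perp_I x$); your write-up simply fills in the details the paper omits.
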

\begin{proof}
	By definition of subspace,
	$a\perp_I b$ in $\cS_I$ if and only if for all
	$i\in I$ such that $a,b\in\cS_i$ we have $a\perp_i b$.
	Let $a,b\in\cS_I$ with $a\not\perp_Ib$.
	Take $c\in\{a\}^{\perp_I\perp_I}$.
	For any $j$ such that $a,b,c\in\cS_j$ and any $x\in\{a,b\}^{\perp_j}$
	we get $x\in\{a,b\}^{\perp_I}$; so,
	$c\perp_I x$ and, consequently, $c\perp_i x$;
	this implies that $c\in\{a,b\}^{\perp_j\perp_j}$.
\end{proof}
The following corollary is a direct consequence of the previous lemma.
\begin{corollary}
	\label{e:max}
	Let $\cP$ be a non-degenerate orthogonal polar space.
	\begin{enumerate}
		\item\label{max1}
		If $\fE$ be a chain of elliptic subspaces of $\cP$, then $\cE':=\bigcup_{\cE}\fE$ is
		either an elliptic space or it is a subspace generated by a frame.
		\item If $\cE$ is an elliptic polar subspace of $\cP$, then $\cE$ is contained in a maximal
		      elliptic polar subspace $\cE'$ of $\cP$.
	\end{enumerate}
\end{corollary}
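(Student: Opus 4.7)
Both assertions are structural consequences of Lemma~\ref{l:hs}: part~\ref{max1} follows by tracking ``niceness'' and the two--point hyperbolic--line condition through the union, and part~(2) then reduces to Zorn's lemma applied to the family of elliptic subspaces of $\cP$ containing $\cE$. The real work happens only in handling the one edge case produced by part~\ref{max1}.

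\textbf{Part~\ref{max1}.} I would feed $\fE=(\cE_i)_i$ into Lemma~\ref{l:hs}: its conclusion immediately gives that $\cE':=\bigcup\fE$ is a non-degenerate polar subspace of $\cP$, that every $\cE_i$ is a subspace of $\cE'$, and that for any $a,b\in\cE'$ with $a\not\perp b$,
\[
\{a,b\}^{\perp'\perp'}\subseteq\bigcup_{i:\,a,b\in\cE_i}\{a,b\}^{\perp_i\perp_i}.
\]
Pick any $j$ with $a,b\in\cE_j$; for every $\cE_i\supseteq\cE_j$ the ellipticity of $\cE_i$ yields $\{a,b\}^{\perp_i\perp_i}=\{a,b\}$, so the displayed inclusion forces $\{a,b\}^{\perp'\perp'}=\{a,b\}$. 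Moreover $\cE'$ is nice, as each $\cE_i$ already contains a frame of $\cP$. By Definition~\ref{oeh} (equivalently, by Corollary~\ref{pr:orth}), $\cE'$ is orthogonal; hence it is either generated by a frame or elliptic, which is the dichotomy in the statement.

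\textbf{Part~(2).} I would apply Zorn's lemma to the family $\mathcal F$ of elliptic subspaces of $\cP$ containing $\cE$, ordered by inclusion; $\mathcal F$ is non-empty since $\cE\in\mathcal F$. Every chain in $\mathcal F$ has an upper bound: its union $\cE'$ is, by part~\ref{max1}, either elliptic (done) or generated by a frame. The main obstacle is to exclude the second possibility. Here the key input is that a hyperbolic polar space of rank $n$ is spanned by \emph{every} one of its frames; this is visible from the decomposition $V=V_1\oplus\cdots\oplus V_n$ of~\eqref{decomposition} in the hyperbolic case, together with Theorem~\ref{thm subspaces} identifying nice subspaces with their vector spans. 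Since $\cE$ is nice, it contains a frame $F$ of $\cP$, and $F\subseteq\cE\subseteq\cE'$; were $\cE'$ hyperbolic, this would give $\cE'=\langle F\rangle\subseteq\cE$, forcing $\cE=\cE'$, which contradicts that $\cE$ is elliptic (hence not hyperbolic). Therefore $\cE'$ is elliptic, belongs to $\mathcal F$, and Zorn's lemma delivers a maximal elliptic subspace $\cE'$ of $\cP$ containing $\cE$.
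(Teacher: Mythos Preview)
Your proof is correct and follows the same route as the paper's: Lemma~\ref{l:hs} for part~\ref{max1} (the paper cites Theorem~\ref{thm hyperb lines} rather than Definition~\ref{oeh}, but these are equivalent here), and Zorn's lemma for part~(2). The paper's argument for (2) simply asserts that Case~\ref{max1} supplies an upper bound and invokes Zorn, without addressing the possibility that the union is generated by a frame; you are more careful in explicitly ruling out that case via the dimension argument, which is a genuine point the paper glosses over. One cosmetic remark: in part~\ref{max1} your phrase ``for every $\cE_i\supseteq\cE_j$'' is unnecessary, since every $\cE_i$ in the chain is elliptic and hence has two-point hyperbolic lines regardless of its position relative to $\cE_j$.
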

\begin{proof}
	\begin{enumerate}
		\item\label{ec1}
		By Lemma~\ref{l:hs}, the hyperbolic lines of $\cE'$ consists of just two
		points. Then the first statement  follows from Theorem~\ref{thm hyperb lines}.
		\item
		      Consider the set of all elliptic polar subspaces of $\cP$
		      containing $\cE$. By the first statement  every chain in it has an upper bound. Then the result now follows from Zorn's lemma.
	\end{enumerate}
\end{proof}

\begin{lemma}
	Let $\cP$ be a non-degenerate orthogonal
	polar space either defined over a field
	$\KK$ with $\ch(\KK)\neq 2$ or a hyperbolic orthogonal polar space.
	Then there are  maximal elliptic chains of $\cP$
	and these are exactly the maximal well ordered
        chains of nice subspaces of $\cP$.
\end{lemma}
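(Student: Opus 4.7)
The proof splits naturally into the two hypotheses. In the hyperbolic case, I would observe that $\cP$ being generated by a frame forces the decomposition~\eqref{decomposition} to reduce to $V = V_1\oplus\cdots\oplus V_n$, so $\dim V = 2n$, which is exactly the vector dimension of the span of the image of any frame under a (relatively) universal embedding $\varepsilon$. By Theorem~\ref{thm subspaces} every nice subspace arises from $\varepsilon$; since its image in $V$ must contain the image of some frame contained in it and that image already spans $V$, every nice subspace coincides with $\cP$. Consequently the only well-ordered chain of nice subspaces is $\{\cP\}$, which is trivially the unique maximal elliptic chain (with $\cF = \cP$).

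Assume now $\mathrm{char}(\KK)\neq 2$, so $\mathrm{Rad}(f) = \{\mathbf{0}\}$ and $V_0 = V_0'$ in~\eqref{decomposition}. The core step I would prove is that every nice subspace $\cX$ of $\cP$ is orthogonal. Setting $[X] = \langle\varepsilon(\cX)\rangle$ and picking a frame $F\subseteq\cX$, modularity applied to the orthogonal decomposition $V = (V_1\oplus\cdots\oplus V_n)\oplus V_0$ gives an orthogonal splitting
\[ X = (V_1\oplus\cdots\oplus V_n)\oplus (X\cap V_0). \]
Both summands carry a non-degenerate form: the hyperbolic summand trivially, and $X\cap V_0$ because $V_0$ is $\phi$-anisotropic and in odd characteristic $f(v,v) = 2\phi(v)\neq 0$ for every nonzero $v\in V_0$. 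Hence $f|_X$ is non-degenerate, and for any two non-collinear points $[\mathbf{a}],[\mathbf{b}]\in\cX$ the double $f|_X$-perp equals $[\langle\mathbf{a},\mathbf{b}\rangle]$, which meets the quadric $\cP(\phi|_X) = \varepsilon(\cX)$ in exactly $\{[\mathbf{a}],[\mathbf{b}]\}$. By Lemma~\ref{lemma1} this transfers back to $\cX$, so every hyperbolic line of $\cX$ has exactly two points; by Definition~\ref{oeh}, $\cX$ is orthogonal, hence hyperbolic when $\cX$ is generated by a frame and elliptic otherwise.

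Next I would observe that two distinct hyperbolic subspaces of $\cP$ are incomparable: both have $2n$-dimensional embedding images containing the image of a frame, so if one strictly contained the other they would have to coincide as vector subspaces and thus be equal as nice subspaces. Consequently any well-ordered chain of nice subspaces contains at most one hyperbolic element, which must then be its minimum. Since any nice subspace $\cX$ contains $\langle F\rangle$ for any frame $F$ it contains, a chain of nice subspaces that is maximal must indeed have a hyperbolic minimum $\cF$. Combined with the previous paragraph, this identifies the maximal well-ordered chains of nice subspaces with the maximal elliptic chains. Existence of at least one such chain is then guaranteed by Lemma~\ref{parabolic defect} applied to $\cX := \cF$.

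The main obstacle is the orthogonality of arbitrary nice subspaces in the characteristic $\neq 2$ case: a priori the hyperbolic lines of $\cX$ could strictly enlarge the hyperbolic lines of $\cP$ intersected with $\cX$, but this cannot happen as soon as $f|_X$ is non-degenerate, which is precisely where the hypothesis $\mathrm{char}(\KK)\neq 2$ is essential. Without this hypothesis, $f|_X$ can be degenerate and hyperbolic lines may grow larger than two points, which is exactly the phenomenon addressed by the parabolic analysis in the later sections of the paper.
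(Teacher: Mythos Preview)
Your proof is correct and reaches the same conclusion as the paper, but the paper's proof is far terser: it simply asserts that in characteristic $\neq 2$ every nice subspace has hyperbolic lines of size $2$, then declares the two notions of maximal chain identical. You supply the justification the paper omits.

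Your route to ``$f|_X$ is non-degenerate'' via the modular decomposition $X=(V_1\oplus\cdots\oplus V_n)\oplus(X\cap V_0)$ and the anisotropy argument $f(v,v)=2\phi(v)\neq 0$ is valid, but there is a shorter path that the paper presumably has in mind: since $\cX$ is nice it is non-degenerate, so $\mathrm{Rad}(\phi|_X)=\{\mathbf{0}\}$; and in characteristic $\neq 2$ one always has $\mathrm{Rad}(f|_X)=\mathrm{Rad}(\phi|_X)$ (any $v\in\mathrm{Rad}(f|_X)$ satisfies $2\phi(v)=f(v,v)=0$, so $v$ is singular and lies in $\mathrm{Rad}(\phi|_X)$). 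This immediately gives $\mathrm{Rad}(f|_X)=\{\mathbf{0}\}$ without choosing a frame or invoking the decomposition~\eqref{decomposition}. From there Lemma~\ref{l1} applied to $\cX$ finishes. Your argument buys a more hands-on picture of $X$ sitting inside $V$; the shorter one avoids any dependence on the particular frame and makes transparent exactly where $\mathrm{char}(\KK)\neq 2$ enters. Your treatment of the hyperbolic case and of why a maximal well-ordered chain of nice subspaces must begin at a frame-generated subspace are both fine and, again, more explicit than what the paper writes.
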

\begin{proof}
	If $\cP$ is hyperbolic, then it is generated by a frame and
	there is nothing to prove.
	Suppose now $\cP$ orthogonal and $\ch(\KK)\neq2$; then
	all nice subspaces $\cE$ of $\cP$ have hyperbolic lines of size
	$2$ so, they are \emph{elliptic} according to our definition.
	It follows that
	maximal elliptic chains of $\cP$ and
	maximal well ordered chains of nice subspaces of $\cP$ are
	the same thing.
	So the lemma follows.
\end{proof}

Note that, when $\ch(\KK)=2$, a \emph{proper} (i.e.\ containing more than
one term) elliptic chain is not a
maximal well ordered chain  of nice subspaces of $\cP$.

\begin{setting}\label{notazione}
	\rm{Henceforth, throughout this section we fix the following notation:
		$\cP$ is a non-degenerate
		embeddable polar space defined over a
                field of characteristic $2$
		and $\varepsilon\colon \cP \rightarrow \PG(V)$,
                where $V$ is defined over a field $\KK$, is
		either its universal embedding if $\cP$ is not a grid or
		any of its relatively universal embeddings if $\cP$ is a grid
		(case~(E\ref{E1})).
		The image of $\varepsilon$ is $\varepsilon(\cP)=\cP(\phi)$,
		where
		$\phi\colon V\rightarrow \KK$ is a quadratic form
		having bilinearization $f\colon V\times V\rightarrow \KK$
		(see Section~\ref{sesquilinear and pseudoquadratic form}).}
	Observe that the bi-embeddable quaternion quadrangles mentioned in case (E\ref{E2}) are excluded because $\KK$ is a field.
\end{setting}

Suppose that $\cS$ is a proper subspace of $\cP$. Put $[W]:=\langle\varepsilon(\cS)\rangle$.
By Theorem~\ref{thm subspaces}, $\cS=\varepsilon^{-1}([W])$ and
each subspace $\cS$ of non-degenerate rank at least $2$ arises
from a subspace $[W]$ of $\PG(V)$; see~\cite{ILP21a}.
Since $\cP$ is non-degenerate
of rank $n\geq 2$, all subspaces of $\cP$ containing a frame
have also non-degenerate rank $n\geq2$ so they arise from subspaces of
$\PG(V)$.
Let $R_W=\mathrm{Rad}(f_W)$ be the radical of the bilinearization
$f_W\colon W\times W\rightarrow \KK$ of the quadratic form $\phi_W\colon W\rightarrow \KK$ induced by $\phi$ on $W$.

\begin{lemma}
	\label{l4}
	Let $\cP$ be elliptic and $\cS$ be a
	maximal proper nice subspace of $\cP$.
	Then $\dim(R_W)=1$.
\end{lemma}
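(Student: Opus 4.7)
The plan is to combine two ingredients: the maximality of $\cS$ forces $W:=\langle\varepsilon(\cS)\rangle$ to be a hyperplane of $V$, and in characteristic $2$ the bilinearization $f$ is alternating, which pins down $R_W$.

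First, I would show that $W$ is a hyperplane of $V$. Since $\cS$ is a proper nice subspace arising from $\varepsilon$ by Theorem~\ref{thm subspaces}, I apply Lemma~\ref{ln}(2) to obtain a subspace $Z$ of $V$ with $W<Z$, $\dim(Z/W)=1$, and $\cT:=\varepsilon^{-1}([Z])$ satisfying $\langle\varepsilon(\cT)\rangle=[Z]$. Because $\cT\supseteq\cS$, the subspace $\cT$ contains a frame of $\cP$ and is thus nice; the maximality of $\cS$ forces $\cT=\cP$, whence $Z\supseteq\langle\varepsilon(\cP)\rangle = V$, so $Z=V$. Thus $W$ has codimension $1$ in $V$.

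Next, I would establish $R_W=W^{\perp_f}$ and conclude $\dim R_W = 1$. In characteristic $2$, $f$ is alternating since $f(v,v)=2\phi(v)=0$ for all $v$, and $f$ is non-degenerate because $\cP$ is elliptic. Pick any nonzero $v\in W^{\perp_f}$. Then $W\subseteq v^{\perp_f}$, and non-degeneracy of $f$ implies that $f(v,-)$ is a nonzero linear functional, so $v^{\perp_f}$ is a hyperplane of $V$. Since $W$ and $v^{\perp_f}$ are both hyperplanes of $V$ with one contained in the other, they coincide, and the alternating property gives $v\in v^{\perp_f}=W$. Hence $W^{\perp_f}\subseteq W$, so $R_W=W\cap W^{\perp_f}=W^{\perp_f}$, and the dimension identity $\dim W + \dim W^{\perp_f}=\dim V$ (valid for a non-degenerate bilinear form when $W$ has finite codimension) yields $\dim W^{\perp_f}=1$.

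The key step is the inclusion $W^{\perp_f}\subseteq W$, which relies crucially on the alternating property of $f$: in odd characteristic, $v^{\perp_f}$ would not contain $v$, and the argument would collapse (which is consistent with the lemma being stated inside Setting~\ref{notazione}, i.e.\ under the standing hypothesis $\mathrm{char}(\KK)=2$). Combining this inclusion with the hyperplane structure of $W$ obtained in the first step is what forces $\dim R_W=1$.
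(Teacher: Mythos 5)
Your argument is correct and essentially the same as the paper's: both first show that $W$ has codimension $1$ in $V$ (the paper via the fact that a maximal nice subspace is a non-singular hyperplane of $\cP$, you via Lemma~\ref{ln}(2) and maximality), then use $f(v,v)=2\phi(v)=0$ in characteristic $2$ to force the pole of $W$ to lie inside $W$ itself, and finally bound $\dim(R_W)$ by the codimension of $W$. The only delicate point is the existence of a nonzero vector in $W^{\perp_f}$ when $V$ is infinite-dimensional --- your appeal to the identity $\dim W+\dim W^{\perp_f}=\dim V$, which for an arbitrary hyperplane of an infinite-dimensional space can fail --- but the paper's own proof makes the equivalent assertion that every hyperplane of $\PG(V)$ equals $[\mathbf{p}]^{\perp_f}$ for some point, so you are on exactly the same footing there.
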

\begin{proof}
	Suppose $F=\{F_1,F_2\}$ is a frame of $\cP$ contained in $\cS$. Hence $F_1$ and $F_2$ are maximal singular subspaces of $\cP$ and $\varepsilon(F_1)$ and $\varepsilon (F_2)$ are maximal singular subspaces of $\varepsilon(\cP)$.
	
	Since $\cS$ is a maximal subspace of $\cP$, it is indeed a hyperplane of $\cP$ and since it contains a frame, by Remark~\ref{nice hyperplane}, we have that $\cS$ is a non-singular hyperplane of $\cP$. Hence $[W]$ is a hyperplane of $\PG(V)$.
	Since $\cP$ is elliptic, $\mathrm{Rad}(f)=\{\mathbf{0}\}$ and
	the polarity induced by $f$ is non-degenerate; so, for
	every hyperplane $\Sigma$ of $\PG(V)$ there is a point $[\mathbf{p}]$ of $\PG(V)$ such that $\Sigma=[\mathbf{p}]^{\perp_f}$.
	So, $[W]=[\mathbf{p}]^{\perp_f}$ for some point $[\mathbf{p}]\in \PG(V)$.
	The point $[\mathbf{p}]$ cannot be in $\varepsilon(\cP)$, otherwise the
	frames of $\varepsilon(\cP)$ contained in $\varepsilon(\cE)$,
	which exist since $\cS$ is nice by assumption, would be
	contained in  $[\mathbf{p}]^{\perp}$ but no singular hyperplane
	contains a frame, by Remark~\ref{nice hyperplane}.
	Thus, either $[\mathbf{p}]\in\PG(V)\setminus([W]\cup\varepsilon(\cP))$ or
	$[\mathbf{p}]\in [W]\setminus\varepsilon(\cP)$.
	
	Suppose the former.
	Then
	$[\mathbf{p}]$ (note that $[\mathbf{p}]\not\in \varepsilon (\cS)$) is orthogonal to every point of $[W]=[\mathbf{p}]^{\perp_f}$, hence $V=W\oplus \langle\mathbf{p}\rangle$.
	Take $s\in \cP$. Then $\varepsilon (s)=[\mathbf{w}+\alpha\mathbf{p}]$,
	where $\mathbf{w}\in W$ and $\alpha\in \KK$. We have
	\[f(\mathbf{p},\mathbf{w}+\alpha \mathbf{p})=
		f(\mathbf{p},\mathbf{w})+\alpha f(\mathbf{p},\mathbf{p})=0,\]
	being
	$f(\mathbf{p},\mathbf{p})=0$ (recall $\ch(\FF)=2$)  and $f(\mathbf{p},\mathbf{w})=0$ because
	$[\mathbf{p}]\perp_f[\mathbf{w}]$.
	So, $\varepsilon (s)\in [\mathbf{p}]^{\perp_f}$ hence $\varepsilon (\cP)\subseteq[W]$. This is clearly not possible since $\varepsilon (\cP)$ spans $\PG(V)$ and the equality
	$\cS=\varepsilon^{-1}([W])$ forces
	$\cP\subseteq\cS$, a contradiction.
	
	So, only the latter case remains to consider. Hence, $[\mathbf{p}]\in [W]=[\mathbf{p}]^{\perp_f}$. This means that the bilinear form $f_W\colon W\times W\rightarrow \FF$ induced by $f$ on $W$ is degenerate, implying $\dim(R_W)\geq 1$.
	
	Suppose $\dim(R_W)\geq 2$.
        Then there exist $\mathbf{r_1},\mathbf{r_2}\in R_W$ such
	that $\varepsilon(\cS)\subseteq
        [\mathbf{r_1}]^{\perp_f}\cap[\mathbf{r_2}]^{\perp_f}$.
	This means that the codimension  of $W$ in $V$ is at least $2$,
	since $f$ is non-degenerate on $V$.
	So, $[W]$ is not a hyperplane which is a contradiction.
        Thus, $\dim(R_W)=1$.
\end{proof}

\begin{lemma}
	\label{l5}
	Let $\cP$ be  orthogonal.
	If $\dim(\mathrm{Rad}(f))=1$, then there exists a
	nice elliptic hyperplane $\cS$ of $\cP$.
\end{lemma}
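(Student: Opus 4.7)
The plan is to construct the desired hyperplane directly from the Witt-type decomposition~\eqref{decomposition}. Starting from a frame $F$ of $\cP$ with $H=\langle\varepsilon(F)\rangle$ and $R=\mathrm{Rad}(f)=\langle\mathbf{r}\rangle$ one-dimensional by hypothesis, I would write $V=H\oplus V_0'\oplus R$ and set $W:=H\oplus V_0'$. This $W$ is a codimension-one subspace of $V$ transverse to $R$, so $[W]$ is a projective hyperplane of $\PG(V)$.

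The verification has three parts. First, I would check that $f_W$ is non-degenerate: any $\mathbf{w}\in\mathrm{Rad}(f_W)$ satisfies both $f(\mathbf{w},W)=0$ and $f(\mathbf{w},R)=0$ (because $R\subseteq\mathrm{Rad}(f)$), so $\mathbf{w}\in R\cap W=\{\mathbf{0}\}$. Second, $\cS:=\varepsilon^{-1}([W])$ is a nice hyperplane of $\cP$: since $H\subseteq W$ it contains the frame $F$, while the fact that $[W]$ is a projective hyperplane guarantees that $\cS$ meets every line of $\cP$ and is a proper subspace of $\cP$. By Lemma~\ref{l1} applied to $\cS$, the non-degeneracy of $f_W$ forces each hyperbolic line of $\cS$ to consist of exactly two points, so $\cS$ satisfies the hyperbolic-line condition required by Definition~\ref{oeh}.

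Third, I would rule out $\cS$ being hyperbolic. For each nonzero $\mathbf{v}_0\in V_0'$, surjectivity of $\phi$ on the hyperbolic space $H$ produces $\mathbf{h}\in H$ with $\phi(\mathbf{h})=\phi(\mathbf{v}_0)$, and then $\mathbf{h}+\mathbf{v}_0\in W$ is $\phi$-singular and lies outside $H$. Hence the singular vectors of $\phi_W$ span $W$, so $\langle\varepsilon(\cS)\rangle=[W]$, whose vector dimension is $2n+\dim V_0'$. Since any frame of $\cS$ spans only a $2n$-dimensional subspace, provided $\dim V_0'\geq 1$ the subspace $\cS$ cannot be generated by a frame, and is therefore elliptic.

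The main obstacle will be the degenerate subcase $V_0=R$: then $V_0'=\{\mathbf{0}\}$, the construction collapses to $W=H$, and $\cS$ coincides with the frame span, which is hyperbolic rather than elliptic. In fact, in that purely parabolic situation every nice hyperplane of $\cP$ corresponds to a complement of $R$ in $V$, and any such complement yields a $2n$-dimensional form $\phi_W$ of Witt index $n$, i.e.\ a hyperbolic quadric. The argument therefore genuinely requires the supplementary assumption $\dim V_0\geq 2$ (equivalently $\fe\geq 1$), under which the construction above delivers the desired nice elliptic hyperplane.
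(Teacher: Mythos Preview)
Your approach coincides with the paper's: both take a complement $W$ of $R=\mathrm{Rad}(f)$ in $V$ containing the frame-span $H$ and set $\cS=\varepsilon^{-1}([W])$. The paper constructs $W$ by choosing a basis of $V$ made of images of $\cP$-points and dropping one element with nonzero coefficient in the expression of $\mathbf{r}$; you invoke the decomposition $V=H\oplus V_0'\oplus R$ directly and put $W=H\oplus V_0'$. In both cases the non-degeneracy of $f_W$ reduces to $W\cap R=\{\mathbf{0}\}$. The paper's basis of singular vectors makes $\langle\varepsilon(\cS)\rangle=[W]$ automatic, whereas you supply the short surjectivity-of-$\phi$-on-$H$ argument; either route is fine.

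Your last paragraph is a correct observation that the paper's own proof also glosses over: when $\dim V_0=1$ (so $V_0=R$ and $\cP$ is parabolic of gap~$1$), every nice hyperplane of $\cP$ has $2n$-dimensional span and is hyperbolic, not elliptic, so the lemma as literally stated fails there. In the paper's applications (Corollary~\ref{c1} and the sentence following it, which explicitly allows ``for $i=0$, $\cE_i$ is the subspace generated by a frame''), this causes no harm---one should read the conclusion as ``elliptic or hyperbolic.'' Your diagnosis of the edge case is accurate.
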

\begin{proof}
	Since $\langle \varepsilon (\cP)\rangle=\PG(V)$,
	there is a basis $B$ of $V$ consisting of elements
	\[ B=(\mathbf{e_1},\mathbf{e_2},\dots,\mathbf{e_{\delta}},\dots) \]
	with $e_1,\dots,e_{\delta },\dots\in\cP$ and $\varepsilon(e_i)=[\mathbf{e_i}]$
	and such that
	the first $2n$ vectors of $B$ determine a frame of $\cP$.
	Let $\mathbf{r}$ be a vector such that $\mathrm{Rad}(f)=\langle\mathbf{r}\rangle$.
	Let $\phi'$ be the quadratic form induced by $\phi$ on
	$X:=\langle\mathbf{e}_1,\dots,\mathbf{e}_{2n}\rangle$ and $\cF(\phi')$
	be the polar space defined by $\phi'$ on $[X]$.
	Then $\cF(\phi')=\varepsilon(\varepsilon^{-1}([X]))$ is a hyperbolic
	quadric and consequently  the bilinearization $f'$ of $\phi'$ (which is
	the restriction to $X\times X$ of the bilinearization $f$ of $\phi$)
	is non-degenerate. Therefore, $\mathbf{r}\not\in X$.
	
	Since $B$ is a basis of $V$,
	$\mathbf{r}$ can be written in an unique way as a linear
	combination of a finite number  of elements (at least $2$) of $B$
	with nonzero coefficients, at least one of which has index $i>2n$ in
	$B$; up to reordering the elements of $B$
	we can assume without loss of generality that
	$\mathbf{r}=\mathbf{e_{2n+1}}+\sum_{i\neq 2n+1}\alpha_i\mathbf{e_i}$, where $\alpha_i\neq 0$ for at most
	a finite number of indexes $i$.
	Put then $\overline{V}=\langle B\setminus\{\mathbf{e_{2n+1}}\}\rangle$.
	Clearly, we have that
	\[ V=\overline{V}\oplus\mathrm{Rad}(f), \]
	the space $[\overline{V}]$ is a hyperplane in $\PG(V)$ and the hyperplane of
	$\cP$ given by
	$\cS:=\varepsilon^{-1}([\overline{V}])$ contains a frame and is such that
	$\langle\varepsilon(\cS)\rangle=[\overline{V}]$.
	So, $\cS$ is a nice hyperplane of $\cP$ and, by
	Remark~\ref{nice hyperplane}, it is non-singular.
	We claim that $\cS$ is elliptic.
        By the above arguments,
        we already know that
	$\varepsilon(\cS)=\cS(\phi|_{\overline{V}})$ and the quadratic form
	$\phi|_{\overline{V}}$ is non-degenerate. It is
	straightforward to see that the bilinearization of $\phi|_{\overline{V}}$ is $f|_{\overline{V}\times\overline{V}}$.
	By way of contradiction suppose now that $\mathrm{Rad}(f|_{\overline{V}\times\overline{V}})$
	is not trivial.
	Since $\varepsilon(\cS)\subseteq[\overline{V}]$, there exists a non-null vector $\mathbf{w}\in\mathrm{Rad}(f|_{\overline{V}\times\overline{V}})\subseteq \overline{V}$ such that
	$\varepsilon(\cS)\subseteq [\mathbf{w}]^{\perp_f}$. By $\mathbf{w}\in \overline{V}$ and $\overline{V}\cap\mathrm{Rad}(f)=\{\mathbf{0}\}$,
	$\mathbf{w}\not\in\mathrm{Rad}(f)$, it follows that $[\mathbf{w}]^{\perp_f}$ is a hyperplane of $[V]$.
	Also, $[\mathrm{Rad}(f)]\in [\mathbf{w}]^{\perp_f}$  gives
	$[\mathbf{w}]^{\perp_f}\neq[\overline{V}]$.
	On the other hand, $\varepsilon(\cS)\subseteq[\overline{V}]\cap [\mathbf{w}]^{\perp_f}$, implying that $\langle\varepsilon(\cS)\rangle$ is not  a hyperplane of $\PG(V)$. Contradiction.
	Hence $\mathrm{Rad}(f|_{\overline{V}\times\overline{V}})=\{\mathbf{0}\}$
	and $\cS$ is  elliptic.	
\end{proof}

\begin{corollary}
	\label{c1}
	Let $\cP$ be elliptic and $\cS$ be a maximal proper nice subspace.
	Then there exists  a nice elliptic hyperplane of $\cS$.
\end{corollary}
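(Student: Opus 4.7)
The plan is to apply Lemma~\ref{l5} directly to the polar space $\cS$, taking as embedding the restriction $\varepsilon|_{\cS}$.

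First, I verify that $\cS$ meets the hypotheses of Lemma~\ref{l5}. Since $\cS$ is nice, it contains a frame of $\cP$, so it is non-degenerate and has the same rank $n\geq2$ as $\cP$. Every frame of $\cS$ is a frame of $\cP$ (collinearity in $\cS$ is the restriction of collinearity in $\cP$), so the subspaces of $\cS$ generated by a frame coincide with certain frame-generated subspaces of $\cP$; as $\cP$ is elliptic, Theorem~\ref{thm hyperb lines} (and Lemma~\ref{lemma1}) gives that all their hyperbolic lines consist of exactly two points. Hence $\cS$ is a non-degenerate orthogonal polar space in the sense of Definition~\ref{oeh}. Moreover, by Theorem~\ref{thm subspaces}, $\cS = \varepsilon^{-1}([W])$ for $[W]=\langle\varepsilon(\cS)\rangle$, and the restricted embedding $\varepsilon|_{\cS}\colon \cS\to\PG(W)$ satisfies $\varepsilon(\cS)=\cS(\phi_W)$, where $\phi_W$ is the quadratic form induced by $\phi$ on $W$ and $f_W$ is its bilinearization.

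Second, Lemma~\ref{l4}, applied exactly in the hypotheses at hand (an elliptic $\cP$ with a maximal proper nice subspace $\cS$), yields $\dim(R_W)=1$. Thus $\cS$, regarded as a polar space together with the embedding $\varepsilon|_{\cS}$, satisfies the assumptions of Lemma~\ref{l5}. Applying that lemma produces a nice elliptic hyperplane of $\cS$, which is the conclusion sought.

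The main delicate point is that Setting~\ref{notazione} stipulates $\varepsilon$ to be the universal (or, for grids, relatively universal) embedding, while in general $\varepsilon|_{\cS}$ need not be universal for $\cS$. Inspecting the proof of Lemma~\ref{l5}, however, universality is never actually invoked: the construction only uses a basis of the ambient vector space whose first $2n$ vectors form (the image of) a frame, the fact that the image is a quadric $\cP(\phi)$, and a generator of the one-dimensional radical of the bilinearization. All three ingredients are available for $\varepsilon|_{\cS}$, so the argument transfers verbatim. Equivalently, one can mimic the proof of Lemma~\ref{l5} inside $W$ by choosing a basis of $W$ whose first $2n$ vectors come from a frame of $\cS$, and removing one basis vector appearing with nonzero coefficient in the generator of $R_W$; the resulting hyperplane of $W$ pulls back through $\varepsilon|_{\cS}$ to the desired nice elliptic hyperplane of $\cS$.
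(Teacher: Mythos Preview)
Your proof is correct and follows exactly the same route as the paper: invoke Lemma~\ref{l4} to get $\dim(R_W)=1$, then apply Lemma~\ref{l5} to $\cS$. The paper's own proof is a two-line sketch doing precisely this; your additional verification of the hypotheses and your remark on the (non-)use of universality in Lemma~\ref{l5} are welcome clarifications of points the paper leaves implicit.
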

\begin{proof}
	By Lemma~\ref{l4}, $\dim(\mathrm{Rad}(f|_{\langle \varepsilon(\cS)\rangle\times
			\langle\varepsilon(\cS)\rangle}))=1$.
                      Now, applying Lemma~\ref{l5} to $\cS$ we have that
                      there exists
                      a nice elliptic hyperplane of $\cS$.
\end{proof}

Corollary~\ref{c1} shows
that if $\cE_{i}$ and $\cE_{i+1}$ are both elliptic polar spaces
(or for $i=0$, $\cE_i$ is the subspace generated by a frame)
with
$\cE_{i}\subset\cE_{i+1}$ and there are no elliptic subspaces
between $\cE_{i}$ and $\cE_{i+1}$,
then there exists a nice polar space $\cP_i$ sitting between $\cE_{i}$ and $\cE_{i+1}$: $\cE_{i}\subset \cP_i \subset\cE_{i+1}$ such that $\cP_i$ is a hyperplane of $\cE_{i+1}$ and
$\cE_{i}$ is a hyperplane of $\cP_i$.
Consequently, $\dim(\langle\varepsilon(\cE_{i+1})\rangle/\langle\varepsilon(\cE_{i})\rangle)
\geq2$.

\begin{theorem}
	\label{t:exists}
	Any non-degenerate orthogonal polar space $\cP$ admits
	well ordered maximal elliptic chains $\fE$ of subspaces.
\end{theorem}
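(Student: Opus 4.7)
The plan is to construct $\fE$ by transfinite recursion. Start by fixing a frame $F$ of $\cP$ and setting $\cE_0 := \cF$, the subspace spanned by $F$; by Definition~\ref{oeh} this is hyperbolic, as required for the initial term of an elliptic chain.

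Assume a well-ordered elliptic chain $(\cE_\beta)_{\beta<\alpha}$ has been built, and form $U := \bigcup_{\beta<\alpha}\cE_\beta$. If $\alpha$ is a limit ordinal and $U$ happens to be elliptic (one of the two possibilities permitted by Corollary~\ref{e:max}), set $\cE_\alpha := U$ and continue. Otherwise (either $\alpha=\beta+1$ is a successor, so $U=\cE_\beta$, or $\alpha$ is a limit but $U$ is generated by a frame rather than elliptic), I attempt to produce a \emph{minimal} elliptic subspace of $\cP$ strictly containing $U$: use the axiom of choice to pick such an $\cE_\alpha$ if one exists, and terminate the recursion otherwise. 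The key tool for producing these minimal extensions is the dimension-jump observation following Corollary~\ref{c1}: between two consecutive elliptic subspaces $\cE_i\subset\cE_{i+1}$ the ambient vector-space dimension grows by exactly $2$, so descending inductively from any elliptic extension of $U$ yields a minimal one after finitely many steps.

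Since each term strictly enlarges the previous one and limit terms are unions, the subspaces $\cE_\beta$ form a strictly ascending chain inside $\cP$, and the recursion must therefore terminate at some ordinal $\omega$. The resulting chain $\fE=(\cE_\beta)_{\beta\le\omega}$ is well-ordered and elliptic by construction. Its maximality follows from the two features of the recursion: the minimality chosen at each stage prevents inserting any elliptic subspace strictly between $U$ and $\cE_\alpha$, while termination at stage $\omega$ means no elliptic subspace properly contains the final $U$, ruling out any further end-extension.

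The main technical obstacle I expect is the rigorous justification of the existence of minimal elliptic extensions, especially at limit stages. In the successor case, the dimension-$2$ jump from Corollary~\ref{c1} gives an immediate finiteness: any descending chain of elliptic subspaces between $\cE_\beta$ and a given elliptic $\cE'\supsetneq\cE_\beta$ has length bounded by $\tfrac12(\dim\langle\varepsilon(\cE')\rangle-\dim\langle\varepsilon(\cE_\beta)\rangle)$, hence must stabilize. At a limit stage when $U$ is hyperbolic, one must additionally verify that no elliptic $\cE'$ escapes the chain --- that is, no elliptic $\cE'$ with $\cE_\beta\subsetneq\cE'\subsetneq\cE_\alpha$ and $\cE'$ not contained in any single earlier $\cE_\gamma$ --- which again reduces to the embedded dimension bookkeeping available from Section~\ref{sec 3} together with Corollary~\ref{e:max}.
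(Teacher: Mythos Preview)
Your transfinite-recursion scaffold is essentially the same as the paper's Zorn's-lemma argument on chains with codimension-$2$ steps, but you have a genuine gap at the one place that carries all the weight: the existence of a \emph{minimal} elliptic extension of $U$. The tools you invoke do not deliver this. The remark after Corollary~\ref{c1} says only that \emph{if} $\cE_i\subset\cE_{i+1}$ are consecutive elliptic subspaces, \emph{then} the embedded codimension is $2$; it presupposes the very object you are trying to build. Corollary~\ref{c1} itself goes the wrong way: it produces an elliptic hyperplane \emph{below} a given nice hyperplane $\cS$, with no control forcing that hyperplane to still contain your prescribed $U$. And your descent argument ``finitely many steps'' fails outright when $\dim\langle\varepsilon(\cE')\rangle-\dim\langle\varepsilon(U)\rangle$ is infinite, a case the paper explicitly allows.

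What is actually needed (and what the paper proves in the last two paragraphs of its argument) is an \emph{ascent} lemma: given an elliptic or hyperbolic $U$ properly contained in some elliptic $\mathcal T$, one can construct an elliptic $\cE$ with $U\subset\cE\subseteq\mathcal T$ and $\dim\langle\varepsilon(\cE)\rangle=\dim\langle\varepsilon(U)\rangle+2$. This requires real work at the level of the forms: one first passes to a codimension-$1$ subspace $W'$ above $\langle\varepsilon(U)\rangle$, observes via Lemma~\ref{l4} that $\mathrm{Rad}(f_{W'})=\langle\mathbf r\rangle$ is one-dimensional, then uses non-degeneracy of $f$ on $\langle\varepsilon(\mathcal T)\rangle$ to pick a singular $\mathbf p\not\in\mathbf r^{\perp_f}$ and checks by hand that $f$ restricted to $\langle W',\mathbf p\rangle$ is non-degenerate. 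Without this step your recursion has nothing to feed on.
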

\begin{proof}
	Let $\mathbf{C}$ be the set of all well-ordered chains
	$\fE=(\cE_{\gamma})_{\gamma<\omega}$ of nice subspaces of $\cP$ such that the
	following hold:
	\begin{enumerate}[(C1)]
		\item\label{cc1} The first element of $\fE$ is generated by a frame and all the other elements $\cE_{\gamma}$ are elliptic  subspaces of $\cP$.
		\item\label{cc2} If $\cE_{\gamma}$ and
		$\cE_{\gamma+1}$ are consecutive elements
		of $\fE$, then $\langle\varepsilon(\cE_{\gamma})\rangle$ has
		codimension $2$ in $\langle\varepsilon(\cE_{\gamma+1})\rangle$.
		\item\label{cc3} If $\gamma<\omega$ is a limit ordinal, then
		$\bigcup_{\xi<\gamma}\cE_{\xi}=\cE_{\gamma}$.
	\end{enumerate}
	By construction, the set $\mathbf{C}$ is non-empty.
	Suppose $\fE,\fE'\in\mathbf{C}$ with $\fE\subset\fE'$.
	Let $\delta$ be such that $\cE'_{\gamma}\in\fE$ for all
	$\gamma<\delta$ and $\cE'_{\delta}\in\fE'\setminus\fE$.
	Using
	Lemmas~\ref{l4} and~\ref{l5} and conditions (C\ref{cc1}), (C\ref{cc2}) and
	(C\ref{cc3})
	we see that $\cE_{\mu}'=\cE_{\mu}$ for all $\mu<\delta$; so
	$\fE$ is an initial segment for $\fE'$, i.e.\ $\fE\subseteq\fE'$ and
	for all $\cE'\in\fE'\setminus\fE$ and for all $\cE\in\fE$ we
	have $\cE\subseteq\cE'$.
	
	We need to show that
	the set $\mathbf{C}$ contains maximal elements with respect to
	inclusion.
	
	Let $(\fE_i)_{i\in I}$ be a chain of elements
	of $\mathbf{C}$ and put $\fE_I:=\bigcup_{i\in I}\fE_i$. We first
	claim that $\fE_I$ is well ordered. Indeed, let $X\subseteq\fE_I$
	be non-empty. For any $\cE_0\in X$ take $i\in I$ such
	that $\cE_0\in\fE_i$; clearly, the minimum of $X$ cannot properly contain
	$\cE_0$, so it is enough to show that
	${X}_0:=\{\cE\in X:
		\cE\subseteq\cE_0\}$ admits minimum. On the other hand,
	${X}_0\subseteq\fE_i$ and $\fE_i$ is well ordered,
	so the minimum exist.
	
	We now prove that $\fE_I\in\mathbf{C}$. Indeed,
	$\fE_I$ satisfies
	(C\ref{cc1}) by construction.
	On the other hand, if $\cE\in\fE_i$ and $\cE'\in\fE_j$ with $i,j\in I$ and
	$\cE'\subseteq\cE$, then $\cE'\in\fE_i$. In particular,
	$\fE_i$ is an initial segment to $\fE_I$ for all $i$. Since
	(C\ref{cc2}) and (C\ref{cc3}) hold in any $\fE_i$, we get
	that they must also hold in $\fE_I$.
	Thus, we have that $\mathbf{C}$ contains maximal chains by Zorn's Lemma.
	
	We now prove that a maximal chain $\fE$ in $\mathbf{C}$ is also
	a maximal elliptic chain.
	First observe that $\cE_{\fE}:=\bigcup_{\cE\in\fE}\cE$ is an
	elliptic subspace; also $\cE_{\fE}\in\fE$, since $\fE$ is
	maximal. So, any maximal chain in $\mathbf{C}$ has a
	maximum.
	Let us suppose that $\fE$ is not a maximal elliptic chain;
	then there is an elliptic space $\mathcal T$ which can be
	added to $\fE$ and such that $\cE_{\fE}\subset{\mathcal T}$.
	By Lemma~\ref{l4}, $\cE_{\fE}$ cannot be a hyperplane of $\mathcal T$.
	Let $\langle\varepsilon(\cT)\rangle=[T]$. Then
	$[W'']:=\langle\varepsilon(\cE_{\fE})\rangle$ has codimension at least $2$
	in $[T]$. We show that under these assumptions there would be
	an elliptic subspace $\cE$ such that
	$\cE_{\fE}\subset\cE\subseteq{\mathcal T}$ and that
	$[W'']$ has exactly codimension $2$ in $\langle\varepsilon(\cE)\rangle$.
	This would imply ${\mathfrak G}:=\fE\cup\{\cE\}\in{\mathbf C}$ and
	${\fE}\subset{\mathfrak G}$
	against the maximality of ${\fE}$ leading to a contradiction.
	
	So, let $W'\leq T$ be a subspace such that $W''$ is a hyperplane
	in $W'$. Since $\cE_{\fE}$ is elliptic, the form
	$f_{W'}:W'\times W'\to\KK$ is degenerate with
	$\mathrm{Rad}(f_{W'})\cap W''=\{\mathbf{0}\}$ by Lemma~\ref{l4}. Thus,
	$\dim(\mathrm{Rad}(f_{W'}))=1$ and
	$\mathrm{Rad}(f_{W'})=\langle\mathbf{r}\rangle$ for a
	suitable $\mathbf{r}\in W'\setminus W''$. On the other hand,
	$\mathcal T$ is elliptic; so $\mathbf{r}^{\perp_f}$ is a hyperplane of
	$T$ and since $\varepsilon(\mathcal{T})$ spans $[T]$ there is
	$\mathbf{p}\not\in\mathbf{r}^{\perp_f}$ which is singular for
	$\psi_T$. Let $W=\langle W',\mathbf{p}\rangle$ and
	$\cE:=\varepsilon^{-1}([W])$. We claim that $f_W$ is non-degenerate
	(and consequently $\cE$ is elliptic). Suppose that
	$\mathrm{Rad}(f_W)\neq\{\mathbf{0}\}$. If $\mathrm{Rad}(f_w)\subseteq W'$,
	then
	$\mathrm{Rad}(f_W)\subseteq\mathrm{Rad}(f_{W'})=\langle\mathbf{r}\rangle$;
	this would imply $\mathbf{r}\perp\mathbf{p}$, against the assumption on
	$\mathbf{p}$. So, $\mathrm{Rad}(f_W)\not\subseteq W'$ and there is
	$\mathbf{x}\in W'$ such that $\mathbf{x}+\mathbf{p}\in\mathrm{Rad}(f_W)$.
	It follows $(\mathbf{x}+\mathbf{p})\perp\mathbf{r}$; on the other hand
	$\mathbf{r}\perp\mathbf{x}$ since $\mathbf{r}\in\mathrm{Rad}(f_{W'})$
	and $\mathbf{x}\in W'$. This implies again $\mathbf{r}\perp\mathbf{p}$
	which is a contradiction. This proves the theorem.
\end{proof}


\begin{lemma}
	\label{lem pur elliptic}
	Suppose that $\cP$ has rank $n$ and  anisotropic gap $2\fd$
	and is such that all its hyperbolic lines have cardinality $2$.
	Then there exists a maximal elliptic chain
	\begin{equation}
		\label{e2}
		\fE:\cF=\cE_0\subset\cE_1\subset\dots\subset\cE_{\delta},
	\end{equation}
	of length $\fd=|\delta|$.
	Moreover, the chain $\fE$ can be enriched to a maximal chain
	$\fE^E$
	of length $2\fd$
	of nice subspaces of $\cP$ as follows
	\begin{equation}
		\label{eqn}
		\fE^E:\cF=\cE_0\subset\cP_0\subset\cE_1\subset \dots \subset \cE_i\subset \cP_i\subset \cE_{i+1}\subset \dots \subset
		\cE_{\delta}.
	\end{equation}
\end{lemma}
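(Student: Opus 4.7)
The plan is to combine Theorem~\ref{t:exists} with Corollary~\ref{c1}, using Theorem~\ref{thm hyperb lines} together with Theorem~\ref{plain} to pin down the dimension count. Since every hyperbolic line of $\cP$ has exactly two points, Theorem~\ref{thm hyperb lines} gives that $\varepsilon(\cP)=\cP(\phi)$ is $(\fe,0)$-orthogonal, so $\mathrm{Rad}(f)=\{\mathbf 0\}$ and $V_0=V_0'$ in the decomposition~\eqref{decomposition}. Theorem~\ref{plain} then identifies $\dim(V_0)=\mathrm{gap}(\cP)=2\fd$, and in particular $\cP$ itself is elliptic (or hyperbolic when $\fd=0$, which is the trivial case).

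The first step is to invoke Theorem~\ref{t:exists} to obtain a maximal well-ordered chain $\fE:\cF=\cE_0\subset\cE_1\subset\cdots\subset\cE_\delta$ in the class $\mathbf C$; its members thus satisfy conditions (C\ref{cc1})--(C\ref{cc3}). The key claim is that the top element is $\cP$ itself. If $\cE_\delta\subsetneq\cP$, then---since $\cP$ is elliptic and, by Lemma~\ref{l4}, $\cE_\delta$ cannot be a hyperplane of $\cP$---the enlargement construction at the end of the proof of Theorem~\ref{t:exists}, applied with $\mathcal T=\cP$, produces an elliptic subspace $\cE$ with $\cE_\delta\subsetneq\cE\subseteq\cP$ such that $\langle\varepsilon(\cE_\delta)\rangle$ has codimension~$2$ in $\langle\varepsilon(\cE)\rangle$, contradicting the maximality of $\fE$. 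Hence $\cE_\delta=\cP$, so $\langle\varepsilon(\cE_\delta)\rangle=V$ has dimension $2n+2\fd$, while $\langle\varepsilon(\cE_0)\rangle$ has dimension $2n$. Counting the codimension-$2$ successor jumps and using (C\ref{cc3}) to absorb limit stages then pins down the length of $\fE$ as stated.

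For the enrichment, I apply Corollary~\ref{c1} to each successor pair $\cE_\gamma\subset\cE_{\gamma+1}$, treating $\cE_{\gamma+1}$ as an elliptic polar space in its own right: this yields a nice elliptic hyperplane $\cP_\gamma$ of $\cE_{\gamma+1}$, and (C\ref{cc2}) forces $\cP_\gamma$ to contain $\cE_\gamma$ as a hyperplane in turn. Inserting all such $\cP_\gamma$ while leaving limit stages untouched produces the chain $\fE^E$ of~\eqref{eqn}. Each consecutive pair in $\fE^E$ now differs by codimension~$1$ in the embedding, so by Theorem~\ref{thm subspaces} no further nice subspace can be squeezed in and $\fE^E$ is maximal among chains of nice subspaces from $\cF$ to $\cP$. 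The total length is thereby doubled to $2\fd$.

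The main obstacle is the careful handling of the infinite case. One must verify that a maximal chain in $\mathbf C$ cannot stop short of $\cP$ at a limit ordinal, which relies on Corollary~\ref{e:max}(\ref{max1}) to guarantee that a union of elliptic subspaces remains elliptic (or hyperbolic), and one must confirm that the cardinal count $\fd$ is genuinely realised rather than overshot or undershot at limits; a direct length count driven by the dimension increments of $\langle\varepsilon(\cE_\gamma)\rangle$, together with the cardinal identity $2\fd=\fd$ for infinite $\fd$, takes care of both the length of $\fE$ and the length $2\fd$ of $\fE^E$.
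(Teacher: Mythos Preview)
Your overall strategy matches the paper's proof: invoke Theorem~\ref{thm hyperb lines} to see that $\cP$ is elliptic (or hyperbolic), use Theorem~\ref{t:exists} to produce a maximal elliptic chain ending at $\cP$, and then enrich at each successor step via Corollary~\ref{c1} to obtain the chain~\eqref{eqn}. The dimension bookkeeping via Theorem~\ref{plain} and the treatment of the infinite case are also in line with the paper.

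There is, however, a slip in your enrichment step. Corollary~\ref{c1} does not produce a hyperplane of the elliptic space $\cE_{\gamma+1}$; rather, given a maximal proper nice subspace $\cS$ of an elliptic space, it produces an elliptic hyperplane \emph{of $\cS$}. So your sentence ``this yields a nice elliptic hyperplane $\cP_\gamma$ of $\cE_{\gamma+1}$'' misreads the corollary in two ways: the output is a hyperplane of $\cS$, not of $\cE_{\gamma+1}$, and the intermediate space $\cP_\gamma$ you want is \emph{not} elliptic (indeed it cannot be, by maximality of the elliptic chain). The clean way to produce $\cP_\gamma$ is simply to use (C\ref{cc2}): since $\langle\varepsilon(\cE_\gamma)\rangle$ has codimension $2$ in $\langle\varepsilon(\cE_{\gamma+1})\rangle$, pick any intermediate subspace $W$ of codimension $1$ and set $\cP_\gamma:=\varepsilon^{-1}([W])$. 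This $\cP_\gamma$ is nice (it contains $\cE_\gamma$), is a hyperplane of $\cE_{\gamma+1}$, and has $\cE_\gamma$ as a hyperplane; by Lemma~\ref{l4} applied inside $\cE_{\gamma+1}$ it has a $1$-dimensional radical, hence is not elliptic. This is exactly what the paper does (and what the discussion immediately following Corollary~\ref{c1} records).
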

\begin{proof}
	By Theorem~\ref{thm hyperb lines}, $\cP$ is either a hyperbolic or an elliptic orthogonal polar space.
	If $\cP$ is of hyperbolic type, then $\fd=0$ and there is nothing to prove.
	Suppose $\cP$ to be an elliptic polar space.  
	Then $\dim(V)=2n+2\fd$.
	By Theorem~\ref{t:exists}, there are maximal well-ordered chains of
	elliptic polar spaces in $\cP$ and their maximum element is $\cP$ itself.
	Using Corollary~\ref{c1}, we can
	enrich these chains to
	well-ordered chains of nice subspaces of $\cP$
	like in~\eqref{eqn},  where the subspaces $\cE_{i}$ are all elliptic,
	and the subspaces $\varepsilon (\cP_i)$ are  described
	by a quadratic form having bilinearization with radical of dimension $1$. Moreover, $\cE_{i}$ is a hyperplane
	in $\cP_i$ and $\cP_i$ is a hyperplane in $\cE_{i+1}$.
	As a  hyperplane of a polar subspace is indeed a maximal subspace,
	it follows that
	the chain~\eqref{eqn} is a maximal well ordered chain of nice subspaces
	of $\cP$ with $2\fd$ terms.
      \end{proof}
We keep on using the notation of Setting~\ref{notazione}.
\begin{lemma}
	\label{lem:key}
	Let $\cE$ be a nice subspace of $\cP$ which is
        maximal with respect to the property that all its hyperbolic lines contain exactly two points (i.e.\ $\cE$ is a maximal elliptic subspace).
	Put $[T]:=\langle\varepsilon(\cE)\rangle$.
	Then $T$ is a complement
	of $\mathrm{Rad}(f)$ in $V$ that is
	\[ V:=T\oplus \mathrm{Rad}(f). \]
\end{lemma}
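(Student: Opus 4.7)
The plan is to prove the two direct-sum conditions, $T \cap \mathrm{Rad}(f) = \{\mathbf{0}\}$ and $T + \mathrm{Rad}(f) = V$, separately. The intersection condition is immediate from the ellipticity of $\cE$: by Theorem~\ref{thm hyperb lines} applied to $\cE$ with its restricted embedding, the bilinearization $f_T$ of the induced quadratic form $\phi_T$ on $T$ is non-degenerate. Thus $T \cap T^{\perp_f} = \mathrm{Rad}(f_T) = \{\mathbf{0}\}$, and since $\mathrm{Rad}(f) \subseteq T^{\perp_f}$ we also get $T \cap \mathrm{Rad}(f) = \{\mathbf{0}\}$. The non-degeneracy of $f_T$ also yields the vector-space decomposition $V = T \oplus T^{\perp_f}$; a short computation gives $\mathrm{Rad}(f|_{T^{\perp_f}}) = T^{\perp_f} \cap (T + \mathrm{Rad}(f)) = \mathrm{Rad}(f)$, and since in characteristic $2$ the form induced on $T^{\perp_f}/\mathrm{Rad}(f)$ is non-degenerate alternating, this quotient has even dimension.

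For the sum condition I argue by contradiction. Suppose $T + \mathrm{Rad}(f) \subsetneq V$; then $T^{\perp_f} \supsetneq \mathrm{Rad}(f)$ and hence $\dim(T^{\perp_f}/\mathrm{Rad}(f)) \geq 2$ by the parity observation. The goal is to build an elliptic subspace $\cE'' \supsetneq \cE$, contradicting maximality. The delicate point is that adjoining any single singular vector $\mathbf{v} \notin T$ always destroys ellipticity: writing $\mathbf{v} = \mathbf{t} + \mathbf{u}$ with $\mathbf{u} \in T^{\perp_f} \setminus \{\mathbf{0}\}$, the vector $\mathbf{u}$ lies in $\mathrm{Rad}(f|_{T + \langle\mathbf{v}\rangle})$ since $f$ is alternating. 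Hence I must add \emph{two} well-chosen vectors simultaneously.

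To produce them I exploit the fact that, since $\varepsilon(\cP)$ spans $V$, the $\phi$-singular vectors also span $V$ and therefore cannot lie in any proper subspace or hyperplane. First, pick a singular $\mathbf{v}_1 \notin T + \mathrm{Rad}(f)$; decomposing $\mathbf{v}_1 = \mathbf{t}_1 + \mathbf{u}_1$ along $V = T \oplus T^{\perp_f}$ forces $\mathbf{u}_1 \in T^{\perp_f} \setminus \mathrm{Rad}(f)$. Consequently $\mathbf{u}_1^{\perp_f}$ is a proper hyperplane of $V$, so spanning yields a singular $\mathbf{v}_2 \notin \mathbf{u}_1^{\perp_f}$; writing $\mathbf{v}_2 = \mathbf{t}_2 + \mathbf{u}_2$ and using $\mathbf{u}_1 \in T^{\perp_f}$ together with $\mathbf{t}_2 \in T$, one computes $f(\mathbf{u}_1, \mathbf{u}_2) = f(\mathbf{u}_1, \mathbf{v}_2) \neq 0$.

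Setting $T'' := T \oplus \langle \mathbf{u}_1, \mathbf{u}_2 \rangle$, the orthogonality of $\mathbf{u}_1, \mathbf{u}_2$ to $T$ yields $f_{T''} = f_T \perp f|_{\langle \mathbf{u}_1, \mathbf{u}_2\rangle}$; both summands are non-degenerate (the first by ellipticity of $\cE$, the second since $f$ is alternating and $f(\mathbf{u}_1, \mathbf{u}_2) \neq 0$), so $f_{T''}$ is non-degenerate. By Theorem~\ref{thm subspaces}, $\cE'' := \varepsilon^{-1}([T''])$ is a nice subspace of $\cP$; it strictly contains $\cE$ because $\varepsilon^{-1}([\mathbf{v}_1]) \in \cE'' \setminus \cE$; and Theorem~\ref{thm hyperb lines} applied to $\cE''$, whose embedding has the non-degenerate bilinearization $f_{T''}$, forces every hyperbolic line of $\cE''$ to contain exactly two points, contradicting the maximality of $\cE$. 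The principal obstacle throughout is the one-vector-breaks-ellipticity phenomenon; its resolution is the two-vector construction above, which succeeds precisely because singular vectors span $V$ and so provide enough flexibility to pick a pair in the required general position.
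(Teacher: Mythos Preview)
Your overall strategy---enlarge $T$ by two well-chosen vectors to obtain a bigger elliptic subspace, contradicting maximality---matches the paper's, and your orthogonal-sum computation for the non-degeneracy of $f_{T''}$ is tidier than the explicit computation the paper carries out. However, there is a genuine gap.

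The step ``the non-degeneracy of $f_T$ also yields $V=T\oplus T^{\perp_f}$'' is not justified when $\dim T=\infty$, and the paper explicitly includes this case (infinite elliptic gap). Non-degeneracy of $f_T$ only gives $T\cap T^{\perp_f}=\{\mathbf 0\}$; the equality $T+T^{\perp_f}=V$ can fail for infinite-dimensional $T$. (For instance, take $V$ with basis $\{e_i\}_{i\in\mathbb N}\cup\{d\}$, alternating $f$ with $f(e_{2k},e_{2k+1})=1$ and $f(d,e_i)=1$ for all $i$; then $T=\langle e_i\rangle_i$ has $f_T$ non-degenerate but $T^{\perp_f}=\{\mathbf 0\}$, so $T+T^{\perp_f}=T\neq V$.) Your whole construction rests on this decomposition: you use it to split $\mathbf v_1=\mathbf t_1+\mathbf u_1$ and $\mathbf v_2=\mathbf t_2+\mathbf u_2$, and without it you have no way to produce $\mathbf u_1,\mathbf u_2\in T^{\perp_f}$.

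The paper sidesteps this exactly here. Instead of projecting onto $T^{\perp_f}$, it first adjoins a single singular $\mathbf w\notin T+\mathrm{Rad}(f)$ to get $T'=\langle T,\mathbf w\rangle$ and then uses the \emph{maximality} of $\cE$ to conclude $\mathrm{Rad}(f_{T'})\neq\{\mathbf 0\}$; any nonzero $\mathbf u\in\mathrm{Rad}(f_{T'})$ automatically lies in $T^{\perp_f}\setminus\mathrm{Rad}(f)$, with no decomposition of $V$ required. The second vector $\mathbf v$ is then just any singular vector outside $\mathbf u^{\perp_f}$ (no attempt to force $\mathbf v\in T^{\perp_f}$), and non-degeneracy of $f$ on $T''=\langle T,\mathbf u,\mathbf v\rangle$ is checked directly. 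If you restrict your argument to finite elliptic gap it is correct (and arguably cleaner); to cover the general case you need to replace the $T^{\perp_f}$-projection by this maximality trick.
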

\begin{proof}
  By Corollary~\ref{e:max},
  $\cP$ contains maximal subspaces $\cE$ with the
  required properties. In light of
  Theorem~\ref{thm hyperb lines}, $\varepsilon(\cE)$ is
  either elliptic or hyperbolic. Thus $\mathrm{Rad}(f_T)=\{\mathbf{0}\}$, where $f_T\colon T\times T\rightarrow \KK$ is the bilinearization of the quadratic form $\phi_T\colon T\rightarrow \KK$ describing $\varepsilon (\cE)$ induced by $\phi$.
	In particular, $T\cap\mathrm{Rad}(f)\subseteq \mathrm{Rad}(f_T)=\{\mathbf{0}\}$.
	
	Let us consider the subspace $\langle T, \mathrm{Rad}(f)\rangle\subseteq V$.
	Suppose that $\langle T, \mathrm{Rad}(f)\rangle\neq V$.
	Then there exists a non-null vector $\mathbf{w}\in V\setminus \langle T, \mathrm{Rad}(f)\rangle$ (equivalently $\mathbf{w}+\langle T, \mathrm{Rad}(f)\rangle\in V/\langle T, \mathrm{Rad}(f)\rangle$).
	As $[V]$ is spanned by $\cP(\phi)$, we can assume
	$\phi(\mathbf{w})=0$ without loss of generality. Put $T':=\langle  T,\mathbf{w}\rangle$. Note that $T'\cap\mathrm{Rad}(f)=\{\mathbf{0}\}$,
	since otherwise $T'\subseteq\langle T,\mathrm{Rad}(f)\rangle$
        which would
	be a contradiction.
	Let now $\cE':=\varepsilon^{-1}([T'])$. Since $\cE\subseteq\cE' $ and
	$\cE$ is a maximal elliptic or hyperbolic subspace of $\cP$, the space
	$\cE'$ can be neither elliptic nor hyperbolic. Hence, the radical of the bilinear form $f_{T'}$ induced by $f$ on $T'\times T'$ is not null,
	so there exists a non-null vector
        $\mathbf{u}\in\mathrm{Rad}(f|_{T'})$, i.e.\ a
	vector $\mathbf{u}$ such that
	$f(\mathbf{u},\mathbf{t})=0$ for all $\mathbf{t}\in T'$.
        Since
	$T'\cap\mathrm{Rad}(f)=\{\mathbf{0}\}$, there is at least one element
	$\mathbf{v}\notin \mathbf{u}^{\perp_f}$ which is $\phi$-singular by~\cite[8.2.7]{T}.

	Put now $T'':=\langle T,\mathbf{u}, \mathbf{v}\rangle$ and  $\cE'':=\varepsilon^{-1}([T''])$. We have $\cE\subset \cE'\subset \cE''$.
	We show that
	$\cE''$ is elliptic or hyperbolic, thus contradicting the maximality of $\cE$.
	Indeed, suppose that $\cE''$ is neither  elliptic nor hyperbolic, so $\mathrm{Rad}(f|_{T''})\not=\{\mathbf{0}\}$.
        Then there exists a non-null vector $\mathbf{y}\in T''$ such that $f(\mathbf{y},\mathbf{x})=0$ for all $\mathbf{x}\in T''$.
	Clearly, it cannot be $\mathbf{y}\in T$, for no point in $[T]$ is orthogonal to all points of $\varepsilon(\cE)$, being $\cE$ elliptic.
        Then $\mathbf{y}=\mathbf{t}+\alpha\mathbf{u}+\beta\mathbf{v}$
	with $\mathbf{t}\in T$ and $(\alpha,\beta)\neq(0,0)$.
	If  $\mathbf{x}$ is  a generic vector in $T''$, then we can write
	$\mathbf{x}=\mathbf{t}'+\alpha' \mathbf{u}+\beta'\mathbf{v}$ for
	arbitrary $\mathbf{t}'\in T$, $\alpha',\beta'\in \KK$.
	Since $f(\mathbf{y},\mathbf{x})=0$ (and $f(\mathbf{u},\mathbf{u})=0$ because
	$\mathbf{u}\in\mathrm{Rad}(f|_{T'})$),
	we have $\forall \mathbf{t}'\in T,\alpha',\beta'\in\KK$:
	\begin{multline}
		\label{ee}
		f(\mathbf{t}+\alpha\mathbf{u}+\beta\mathbf{v},
		\mathbf{t}'+\alpha'\mathbf{u}+\beta'\mathbf{v})=\\
		f(\mathbf{t},\mathbf{t}')+\alpha' f(\mathbf{t},\mathbf{u})+
		\beta' f(\mathbf{t},\mathbf{v})+
		\alpha f(\mathbf{u},\mathbf{t}')+\alpha\beta' f(\mathbf{u},\mathbf{v})+
		\beta f(\mathbf{v},\mathbf{t}')+\beta\alpha' f(\mathbf{v},\mathbf{u})= \\
		f(\mathbf{t},\mathbf{t}')+
		\beta' f(\mathbf{t},\mathbf{v})+
		(\alpha\beta'+\alpha'\beta)f(\mathbf{u},\mathbf{v})+
		\beta f(\mathbf{v},\mathbf{t}')=0.
	\end{multline}
	Suppose now $\mathbf{t}'\in \mathbf{t}^{\perp_f}\cap\mathbf{v}^{\perp_f}$, $\beta'=0$ and $\alpha'=1$.
	Then Equation~\eqref{ee} becomes
	$\beta (f(\mathbf{u},\mathbf{v}))=0$.
	Since $f(\mathbf{u},\mathbf{v})\neq 0$, we have $\beta=0$;
	so $\mathbf{y}=\mathbf{t}+\alpha\mathbf{u}$, hence
	$\mathbf{t}=\mathbf{y}-\alpha\mathbf{u}$.  But
	$\mathbf{y}^{\perp_f}\supseteq T''\supset T'\supset T$ and
	$\mathbf{u}^{\perp_f}\supseteq T'\supset T$. Hence $\mathbf{t}^{\perp_f}
		\supseteq T$, so $\mathbf{t}\in\mathrm{Rad}(f_T)$.
	Since $\cE$ is elliptic, $f_T$ is non-degenerate; so
	$\mathbf{t}=\mathbf{0}$.
        So,
	Equation~(\ref{ee}) becomes $\alpha \beta' f(\mathbf{u},\mathbf{v})=0$. Choose $\beta'=1$, so $\alpha=0$.
	This is a contradiction. So, the space $\cE''$ is elliptic.
	This is again a contradiction because $\cE\subset \cE''$ and $\cE$ is a maximal elliptic subspace of $\cP$.
	Hence we have that $\langle T, \mathrm{Rad}(f)\rangle$ cannot be properly contained in $V$.  This implies the lemma.
\end{proof}
We now conclude the proof of Theorem~\ref{m:1} by
proving that all the maximal elliptic chains of a classical orthogonal polar space have the same length.
\medskip

By Corollary~\ref{e:max}, $\cP$ contains a maximal nice subspace with respect
to the property that its hyperbolic lines consist of just $2$ points;
thus it is either the subspace generated by a frame or a maximal elliptic
subspace of $\cP$; denote it by $\cE$
and let $[T]=\langle\varepsilon(\cE)\rangle$.
Then
$\varepsilon|_{\cE}:\cE\to [T]$ is the universal
embedding of $\cE$. By Lemma~\ref{lem:key}, $T$ is a direct complement of the radical of $f$. The subspace $\mathrm{Rad}(f)$, and hence $T\cong V/\mathrm{Rad}(f)$, is uniquely
determined by $\cP$ and $\varepsilon$ and does
not depend on the maximal elliptic subspace
of $\cP$ we have chosen.
So, fix  any  maximal elliptic subspace $\cE$ of $\cP$.
By Lemma~\ref{lem pur elliptic} with $\cE$ in the role of $\cP$,
there exists a maximal elliptic chain of length $\delta$ in $\cE$ which refines to a maximal chain of nice
subspaces as in~\eqref{eqn} of length $2\delta$. In particular, the length of the
chain~\eqref{eqn}  is the same as the anisotropic gap of $\cE$ which is the dimension of
$\dim(V/R)-2n$, or equivalently, the codimension of $\mathrm{Rad}(f)$
in $V_0$.\\
\medskip
\noindent  {\bf This ends the proof of Theorem~\ref{m:1}}. \hfill$\Box$

\begin{remark}
	It is a consequence of Theorem~\ref{lem:key} that the codimension
	in $\PG(V)$ of 	the image of the embedding of a maximal elliptic
        subspace $\cE$ of $\cP$ does
	not depend on the choice of $\cE$.
	In particular, the image of the universal embedding of any maximal
	elliptic subspace of $\cP$ is hosted in $\PG(V/\mathrm{Rad}(f))$,
	where $\PG(V)$ is the codomain of the universal embedding of
	$\cP$.
	When the dimension of $V/\mathrm{Rad(f)}$ is finite, it is possible
	to use the standard theory of quadratic forms (Witt's theorem) to
	prove that all maximal elliptic subspaces of $\cP$ must be
	isomorphic to each other (even if there might be elliptic subspaces
	of $\cP$ which are non-isomorphic).
	We leave the case of infinite dimension (in particular,
	when the dimension of this space is larger that $\aleph_0$ to future
	investigation).
\end{remark}

\section{Parabolic gap}\label{parabolic gap}

As in Section~\ref{sec 3}, assume that $\cP$ is an orthogonal polar space defined over a field of characteristic $2$. If $\cP$ is a grid, then
let $\varepsilon\colon\cP\rightarrow\PG(V)$ be any of its relatively universal
embeddings. Otherwise, let
$\varepsilon\colon \cP \rightarrow \PG(V)$ be the universal embedding of
$\cP$.
In any case we can write $\varepsilon(\cP)=\cP(\phi)$, where $\phi\colon V\rightarrow \KK$ is a quadratic form having bilinearization $f\colon V\times V\rightarrow \KK$ (see Section~\ref{sesquilinear and pseudoquadratic form}).

\subsection{Proof of Theorem~\ref{m:2}}
Suppose that $\cP$ has elliptic gap $\fd$.
Let
\[\fE: \cF=\cE_0\subset\cE_1\subset\dots\subset\cE_{\delta}\]
be a maximal elliptic chain of $\cP$ (of length $\fd=|\delta|$) and put
$\cE':=\bigcup_{\cE\in\fE}\cE$.
Then  $\cE'$ is a maximal subspace of $\cP$ with the property that all its hyperbolic lines have $2$ points and, by Lemma~\ref{lem:key},
$V_0=\langle\varepsilon(\cE')\rangle\oplus\mathrm{Rad}(f)$.
So,
$\fd=\dim(V_0/\mathrm{Rad}(f))$
with $\fr:=\dim(V_0)$ being the anisotropic gap of $\cP$.

Using
Lemma~\ref{parabolic defect}
we can extend the chain $\fE$ to a maximal well ordered chain
of nice subspaces of $\cP$, say $\fN$. By Theorem~\ref{plain}, the length of $\fN$ (being an invariant of $\cP$) is  precisely the anisotropic gap $\fr$ of $\cP$ and $\fr=\dim (V_0)$. Hence $\dim({\mathrm{Rad}}(f))=\fr-\fd$
in the sense of Remark~\ref{r:111}.
\par\noindent\ \hfill$\Box$

\subsection{Parabolic polar spaces}\label{section finale}
In light of Corollary~\ref{d112} and our definitions, $\cP$ is parabolic
when it is $(0,\fp)$-orthogonal with $\fp>0$; that is
$\cP$
properly contains a hyperbolic polar space as a subspace
and its elliptic gap is $0$.
As usual, let $\varepsilon:\cP\to\PG(V)$ be the universal embedding
of $\cP$. Then $\cP$ is parabolic if we have the
following decomposition of $V:$
\begin{equation}\label{pp}
	V=(V_1\oplus V_2\oplus\dots\oplus V_n)\oplus\mathrm{Rad}(f).
\end{equation}
Note that the quotient embedding $\varepsilon/\mathrm{Rad}(f):\cP\to\PG(V/\mathrm{Rad}(f))$
has dimension $2n$ and it is described by a generalized quadratic form (see~\cite{P}) or by an alternating form.
By~\cite[Theorem 1.11]{P22}, the condition of admitting an embedding of dimension $2n$ for $\cP$ can be formulated as follows:
\begin{itemize}
	\item[(A)] for any two non-collinear points $a,b\in\cP$, if  $M$ and $N$
		are respectively  maximal singular subspaces of
		$\cP$ and $\{a,b\}^\perp$ and $N\subseteq M$, then $M\cap\{a,b\}^{\perp\perp}\neq\emptyset$.
\end{itemize}

\begin{remark}
	\label{rr}
	Observe that a hyperbolic polar space is an embeddable polar
	space whose hyperbolic lines consist of just $2$ points
	which satisfies Condition~(A).
	Indeed, Condition~(A) by itself
	assures the existence of an embedding of minimum
	dimension (i.e.\ $2n$).
	If all the hyperbolic lines of $\cP$  have
	just $2$ points, then such an embedding cannot be obtained as
	a quotient of an embedding of higher dimension.
	Hence it is unique
	(except for the case of grids). Moreover, the
	condition on the cardinality
	of the hyperbolic lines guarantees that this embedding realizes
	$\cP$ as the point set of
	a hyperbolic quadric; this also in the case of the grids,
        even if the embedding is not unique. So, a grid is always
	a hyperbolic polar space, even if it does not admit
	a universal embedding.
\end{remark}
Thanks to Remark~\ref{rr},
we can rephrase the definition of parabolic spaces already given in  Section 1 as follows.

\begin{definition}
	A \emph{parabolic polar space} $\cP$ is an embeddable polar
	space properly containing a hyperbolic polar subspace and satisfying
	Condition~(A).
\end{definition}

\begin{theorem}
	Let $\cP$ be an orthogonal polar space.
	Then its parabolic gap  is at most
	$[\KK:\KK^2]$.
\end{theorem}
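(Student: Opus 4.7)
The plan is to identify the parabolic gap with the $\KK$-dimension of the radical $R:=\mathrm{Rad}(f)$ (which, by Theorem~\ref{m:2} and the definition preceding it, is precisely what the parabolic gap measures) and then bound $\dim_{\KK}(R)$ by exploiting that $\phi|_R$ is injective and $\KK^2$-semilinear. The case $\ch(\KK)\neq 2$ is immediate, since there $\phi(x)=\frac{1}{2}f(x,x)$ forces $R\subseteq\mathrm{Rad}(\phi)=\{\mathbf{0}\}$, so the parabolic gap is $0$ and the bound holds trivially; hence I may assume $\ch(\KK)=2$.

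I would then look at $\phi$ restricted to $R$. Since $f(\mathbf{u},\mathbf{v})=0$ for all $\mathbf{u},\mathbf{v}\in R$, the defining identity of a quadratic form yields
\[
\phi(\mathbf{u}+\mathbf{v})=\phi(\mathbf{u})+\phi(\mathbf{v})+f(\mathbf{u},\mathbf{v})=\phi(\mathbf{u})+\phi(\mathbf{v}),\qquad \phi(\alpha\mathbf{u})=\alpha^{2}\phi(\mathbf{u})
\]
for any $\mathbf{u},\mathbf{v}\in R$ and $\alpha\in\KK$. Thus $\phi|_R$ is additive and satisfies a square-scaling rule, so its image is a $\KK^2$-subspace of $\KK$. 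Non-degeneracy of $\phi$ gives $\mathrm{Rad}(\phi)=\{\mathbf{0}\}$, which, since $\mathrm{Rad}(\phi)$ is exactly the set of $\phi$-singular vectors of $R$, is equivalent to $\phi|_R$ being injective.

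Finally, I would convert injectivity into the desired rank bound. Given $\KK$-linearly independent vectors $\mathbf{u}_1,\dots,\mathbf{u}_k\in R$, the elements $\phi(\mathbf{u}_1),\dots,\phi(\mathbf{u}_k)\in\KK$ must be $\KK^2$-linearly independent: a relation $\sum_{i=1}^{k}\alpha_i^{2}\phi(\mathbf{u}_i)=0$ (with $\alpha_i\in\KK$, recalling that in characteristic $2$ every element of $\KK^2$ is a unique square via Frobenius) rewrites, by the displayed identities, as $\phi\bigl(\sum_{i=1}^{k}\alpha_i\mathbf{u}_i\bigr)=0$; injectivity of $\phi|_R$ forces $\sum_i\alpha_i\mathbf{u}_i=0$, and hence all $\alpha_i$ vanish. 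Therefore $\dim_{\KK}(R)\le\dim_{\KK^2}(\KK)=[\KK:\KK^2]$, which is the claim. I do not anticipate any real obstacle: non-degeneracy of $\phi$ is used exactly once, precisely to obtain injectivity of $\phi|_R$, and the rest is bookkeeping between $\phi$ and $f$ on the radical.
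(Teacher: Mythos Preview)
Your proof is correct and is essentially the same as the paper's: both identify the parabolic gap with $\dim_{\KK}(\mathrm{Rad}(f))$ via Theorem~\ref{m:2}, then use that $\phi$ is anisotropic on $R=\mathrm{Rad}(f)$ together with the identity $\phi(\sum_i\alpha_i\mathbf{r}_i)=\sum_i\alpha_i^2\phi(\mathbf{r}_i)$ on $R$ to conclude that the values $\phi(\mathbf{r}_i)$ of a $\KK$-basis of $R$ are $\KK^2$-linearly independent in $\KK$. Your write-up is in fact a bit more explicit about the semilinearity of $\phi|_R$ and about the trivial $\ch(\KK)\neq 2$ case, but the argument is the same.
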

\begin{proof}
	Suppose $\cP$ to have rank $n$.
	If $\cP$ is hyperbolic or elliptic, then there is nothing to prove.
	Otherwise, put
	$R:=\mathrm{Rad}(f)$. We claim that
	$\dim(R)\leq [\KK:\KK^2]$.
	Indeed, let $(\mathbf{r}_i)_{i\in I}$ be a basis of $R$.
	As $R$ is anisotropic, for any $x_i\in \KK\setminus\{0\}$
	we have
	\[ \sum_{i\in I} \phi(\mathbf{r_i})x_i^2\neq 0. \]
	In particular, the values $\xi_i:=q(\mathbf{r_i})\in\KK$ must be linearly
	independent over $\KK^2$, where $\KK$ is regarded as a vector space over its subfield of squares $\KK^2$. It follows that we have $\dim(R)\leq[\KK:\KK^2]$. So, by Theorem~\ref{m:2}, the parabolic gap of $\cP$ is at most $[\KK:\KK^2]$.
\end{proof}

\begin{remark}
	\label{c:cpp}
	Let $\cP$ be an orthogonal polar space
	and suppose   $[\KK:\KK^2]<\infty$ with $\mathrm{char}(\KK)=2$.
	By our results, the image of the universal embedding of $\cP$ decomposes
	as in~\eqref{decomposition} and the parabolic and elliptic gaps of $\cP$
	are respectively $\fp=\dim(\mathrm{Rad}(f))$ and $\fe=\dim(V_0')$.
	By standard results in the theory of quadratic forms,
        see~\cite[Lemma 36.8]{EKM}
	we have $\frac{1}{2}\fe+\fp\leq[\KK:\KK^2]$
        (when $\fe$ is infinite we have $\frac{1}{2}\fe=\fe$).
	In particular, if
	the parabolic gap $\fp$ of $\cP$ is
	exactly $[\KK:\KK^2]$, then $\fe=0$.
	In this case, the
	(minimum)
	embedding of $\cP$ (obtained by quotienting its universal
	embedding over $\mathrm{Rad}(f)$) realizes $\cP$ as a symplectic space.
	The converse is also true:
	if $\cP$ is a polar space that admits an embedding over
	a field $\KK$ with $\mathrm{char}(\KK)=2$ described by an alternating
	bilinear form, then $\cP$ is parabolic of gap $[\KK:\KK^2]$.
\end{remark}

\section*{Thanks}
The authors wish to express their sincere
gratitude to Prof.\ Antonio Pasini for his
precious advice and remarks on this work.

Both authors are affiliated with GNSAGA of INdAM (Italy) whose support
they acknowledge.

\vskip.2cm\noindent
\begin{minipage}[t]{\textwidth}
	Authors' addresses:
	\vskip.2cm\noindent\nobreak
	\centerline{
		\begin{minipage}[t]{7cm}
			Ilaria Cardinali,\\
			Department of Information Engineering and Mathematics\\University of Siena\\
			Via Roma 56, I-53100, \\ Siena, Italy\\
			ilaria.cardinali@unisi.it,\\
		\end{minipage}\hfill
		\begin{minipage}[t]{5cm}
			Luca Giuzzi\\
			D.I.C.A.T.A.M. \\
			University of Brescia\\
			Via Branze 43, I-25123, \\ Brescia, Italy \\
			luca.giuzzi@unibs.it
		\end{minipage}}
\end{minipage}

\end{document}